\documentclass{article}

\usepackage{cite}
\usepackage{amsmath}
\usepackage{amssymb}
\usepackage{amsthm}
\usepackage{algorithm}
\usepackage[noend]{algpseudocode}
\usepackage{url}
\usepackage{multicol}
\usepackage[margin=1in]{geometry}
\usepackage{layouts}
\usepackage[nolist]{acronym}
\usepackage{parskip}
\usepackage[font=small,labelfont=bf]{caption,subcaption}
\usepackage{booktabs}
\linespread{1.3}

\usepackage{tikz}
\usetikzlibrary{calc,arrows.meta,shapes,positioning,decorations.markings}
\tikzstyle{myPath} = [-{Latex[length=3mm,width=2mm]}, line width=0.3mm]
\tikzstyle{myPath2} = [-{Latex[length=2mm,width=1mm]}, line width=0.2mm]
\usepackage{amssymb}

\newcommand{\bmat}[1]{\begin{bmatrix}#1\end{bmatrix}}

\newtheorem{thm}{Theorem}[section]
\newtheorem{lem}[thm]{Lemma}

\newtheorem{prop}{Proposition}

\newtheorem{rem}{Remark}

\usepackage{xcolor}

\newcommand{\bt}[1]{\textbf{#1}}

\hyphenation{}

\begin{document}

\title{Two-stage stochastic approximation for dynamic rebalancing of shared mobility systems}

\author{Joseph Warrington\thanks{Simons Institute for the Theory of Computing, UC Berkeley, 121 Calvin Lab, Berkeley, CA 94720-2190, USA; Automatic Control Lab, ETH Zurich, Physikstrasse 3, 8092 Zurich, Switzerland. E-mail: \texttt{warrington@control.ee.ethz.ch}.}
~and Dominik Ruchti
\thanks{Automatic Control Lab, ETH Zurich, Physikstrasse 3, 8092 Zurich, Switzerland.}}

\maketitle


\begin{abstract}
Mobility systems featuring shared vehicles are often unable to serve all potential customers, as the distribution of demand does not coincide with the positions of vehicles at any given time. System operators often choose to reposition these shared vehicles (such as bikes, cars, or scooters) actively during the course of the day to improve service rate. They face a complex dynamic optimization problem in which many integer-valued decisions must be made, using real-time state and forecast information, and within the tight computation time constraints inherent to real-time decision-making. We first present a novel nested-flow formulation of the problem, and demonstrate that its linear relaxation is significantly tighter than one from existing literature. We then adapt a two-stage stochastic approximation scheme from the generic SPAR algorithm due to Powell \textit{et al.}, in which rebalancing plans are optimized against a value function representing the expected cost (in terms of fulfilled and unfulfilled customer demand) of the future evolution of the system. The true value function is approximated by a separable function of contributions due to the rebalancing actions carried out at each station and each time step of the planning horizon. The new algorithm requires surprisingly few iterations to yield high-quality solutions, and is suited to real-time use as it can be terminated early if required. We provide insight into this good performance by examining the mathematical properties of our new flow formulation, and perform rigorous tests on standardized benchmark networks to explore the effect of system size. We then use data from Philadelphia's public bike sharing scheme to demonstrate that the approach also yields performance gains for real systems.
\end{abstract}
\begin{acronym}
	\acro{ADMM}{alternating direction method of multipliers}
	\acro{DDP}{Dual dynammic programming}
	\acro{DP}{Dynammic programming}
    \acro{DRRP}{Dynamic Repositioning and Routing Problem}
    \acro{LP}{linear programming}
    \acro{LR}{Lagrangian relaxation}
    \acro{MIP}{mixed-integer program}
    \acro{MPC}{model predictive control}
    \acro{NYC}{New York City}
    \acro{RV}{rebalancing vehicle}
    \acro{SV}{shared vehicle}
    \acro{VF}{value function}
\end{acronym}


\section{Introduction}
Shared mobility systems, in which vehicles such as bicycles, scooters, and cars are used on demand by customers for a point-to-point journey, are an integral part of many transportation networks. A substantial fraction of cities worldwide have public bike sharing schemes, and at time of writing a recent trend has emerged toward so-called ``station free'' services featuring bicycles, scooters or cars that can be unlocked via smartphone apps and left in any legal public location after customers complete journeys.

Operators of such schemes, be they city franchise holders or autonomous private companies, typically aim to maximize the number (or total value) of journeys the system is able to support, particularly in cases where customers pay a flat annual fee to access the service. Since customer demand is not evenly distributed over the network, improving the service rate typically implies paying staff to redistribute the shared vehicles manually. However, this incurs costs that must be traded off against any improvement to service quality. Some existing systems also allow private agents to participate in the rebalancing effort alongside the operator by offering appropriate incentives, e.g.~the Bike Angels program\footnote{\url{https://www.citibikenyc.com/bikeangels/}, accessed September 2018} for Citibike in New York City, and the Charger program\footnote{\url{https://www.bird.co/charger}, accessed September 2018} offered by Bird.

Focusing on the system operator's role, the decision of \emph{how} to rebalance the system, meaning which staff movements and redeployments of shared vehicles should take place and when, is complicated by the random nature of customer demand. In many modern systems, customers rent vehicles spontaneously without reservation, and it is not known exactly where they will go or how long they will take to get there. Therefore the operator can only estimate their future behaviour in a statistical manner, e.g.~with a model built from past data. Nevertheless, concrete rebalancing actions must be planned at any given time amidst this uncertainty; typically the operator wishes these to be optimal \emph{in expectation} over the full distribution of possible demand-side requests. If rebalancing actions are planned based only on a single assumed demand scenario, performance may turn out to be poor in other, equally credible scenarios. Thus demand uncertainty is arguably the most material source of difficulty in the system operator's decision problem, with potential staff holdups caused by road congestion being an additional factor to consider.

Some of the operator's rebalancing actions can be planned or carried out ahead of time, for example by acting at night while the system is closed \cite{raviv_optimal_2013}, or by using a decision rule that is a function of the current system state but which does not itself change over time \cite{jorge_comparing_2014}. However, operators can better respond to the dynamically-evolving state of the network if they use an \emph{online optimization} approach \cite{contardo_balancing_2012, ghosh_dynamic_2017}, solving what is known in \cite{ghosh_dynamic_2017} as the \ac{DRRP}. This optimization, typically computing a look-ahead plan a number of hours into the future, tailors the decision problem to current needs and can exploit short-term forecast information, for example an estimate of changes in demand due to the weather, or a public event taking place in the city. It is therefore attractive to develop algorithms for solving such online problems quickly enough for a real-time implementation, meaning in minutes rather than hours, and a body of literature has emerged on the topic. 

Existing work on \ac{DRRP}-type problems, including formulations featuring autonomous vehicles and on-demand ride sharing, falls into two broad categories in terms of theoretical motivation, namely optimization-based and queuing-theoretic. Optimization-based methods directly model the system's costs, constraints, dynamics and demand data, and return an explicit system balancing decision, which may also include pricing decisions. Nair and Miller-Hooks \cite{nair_fleet_2011} considered the short-term problem of redistributing \acp{SV} as a chance-constrained integer program, in which demand rates were assumed constant and the operator wishes to serve most demand across rental locations with high probability. Ghosh \emph{et al.}~\cite{ghosh_dynamic_2017} used \ac{LR} to optimize routing and redeployment actions for a bike-sharing scheme against a pricing function representing the value of bikes at different nodes and different times. They encountered a common trade-off of \ac{LR} for large-scale problems, between the complexity of solving the decomposed subproblems and the quality of the solution once the algorithm has converged \cite{fisher_applications_1985}. They clustered the stations in the Boston test system in order to limit the computational complexity of each iteration and obtain a solution with acceptable accuracy; nevertheless the total computation time for a city-scale problem was on the order of a day. Pfrommer \textit{et al.}~used deterministic-equivalent forecasts of customer behaviour, and assumptions on their price sensitivity, to construct simultaneous schedules of system rebalancing actions and real-time prices for users \cite{pfrommer_dynamic_2014}. Recent literature on one-way car sharing systems also provides a number of formulations that fall into this category. Jorge \textit{et al.}~developed a simultaneous design and operational model for such systems which employed rebalancing heuristics and deterministic demand to simplify the problem \cite{jorge_comparing_2014}. Boyac{\i} \textit{et al.}~created a simultaneous routing, rebalancing, and charging formulation, and simplified the routing problem using dummy nodes to achieve a tractable integer programming formulation \cite{boyaci_optimization_2015}. The work was later elaborated to include a more sophisticated model of the rebalancing personnel's movements \cite{boyaci_integrated_2017}. Nourinejad \textit{et al.}~formulated a similar problem as a coupled travelling salesman problem for vehicles and personnel, and used an iterative heuristic to solve a problem featuring deterministic demand. Other recent studies have focused on real-time matching algorithms to allocate customers to vehicles at city scale in a ride-sharing context \cite{masoud_real-time_2017, alonso-mora_-demand_2017}.

Recent examples of optimization-based control have employed so-called commodity flow formulations borrowed from the network flow literature \cite{ahuja_network_1993}. In this setting, customers and shared vehicles are treated as fluid flows along capacitated links, where the links represent roads and vehicles carrying vehicles or passengers. Fan \textit{et al.}~presented an early formulation for car sharing systems, in which a stochastic optimization problem was approximated using Monte Carlo scenario generation \cite{fan_carsharing:_2008}. A problem with five time steps and four spatial nodes was solved. Contardo \textit{et al.}~solved a deterministic bike-share balancing problem using a flow-based formulation, employing Benders' decomposition and column generation to solve it \cite{contardo_balancing_2012}. Very recently, flow formulations have been used to address broader design questions for (autonomous) mobility systems, but these differ from the examples above in that they view all transport flows as continuous variables \cite{salazar_interaction_2018, rossi_routing_2018}. The chief attractions of flow formulations are that they can either assume away integrality constraints owing to the scale of the problem, or otherwise harness long-known results from single- or multi-commodity flow theory when integrality constraints are important.

In contrast to optimization, queuing models of mobility systems leverage theory designed for stochastic queuing processes, and are therefore amenable to pricing or other control policies accounting for this randomness \cite{regue_proactive_2014, banerjee_pricing_2015, schuijbroek_inventory_2017}. Inventory models such as \cite{raviv_optimal_2013} are inspired by mathematically similar problems of this kind from the operations research literature, and lead to a similar search for optimal \emph{policies}, rather than explicit control actions. He \textit{et al.}~embedded a queuing model of an electric car sharing system within an integer decision problem selecting which parts of a city the system should serve in order to maximize profit \cite{he_service_2017}. A drawback to queuing models, however, is that time-varying input data is significantly harder to model than in a discrete-time optimization setting -- although new preliminary results in this direction are emerging \cite{banerjee_value_2018}. 

The present paper addresses large-scale \ac{DRRP} instances with stochastic customer demand, within a discrete-time stochastic optimization setting. We propose a novel method based on stochastic approximation \cite{borkar_stochastic_2009}, in which a finitely-parameterized function is used to represent a function from a broader class as accurately as possible (according to a specifically defined metric). This approach is commonplace in reinforcement learning literature, where ``optimal'' coefficients for a vector of pre-selected basis function are learned from sampled behaviour of the system \cite{busoniu_reinforcement_2010}. In the case of shared mobility, we are interested in estimating the expected value of all customer journeys, under random demand patterns, as a function of the operator's rebalancing decisions. A good estimate of this \ac{VF} allows the operator to optimize its actions without needing to include the high dimensional and stochastic second stage in the same problem.

One approach to obtaining an accurate \ac{VF} would be to estimate, for example using Monte Carlo sampling of customer demand scenarios, the expected value of \emph{each decision} the system operator could make, and then pick the one that leads to the lowest cost. However, although the search space is enumerable -- the operator only has to decide where and when (in discretized time) to pick up and deposit a finite number of shared vehicles -- the number of possible actions is exponential in the number of spatial locations and time steps. Powell \textit{et al.}~suggest approximating the high-dimensional \acp{VF} encountered in many operations research problems as a separable sum of functions of their coordinate variables \cite{powell_learning_2004}. They describe a stochastic gradient scheme known as \emph{SPAR} to achieve this, and report good performance for two-stage stochastic programs. In other studies such as \cite{powell_stochastic_2003} the same authors applied several related approaches to logistics problems.

In this paper we adapt the SPAR algorithm to \ac{DRRP}. We cast the rebalancing problem as a two-stage stochastic program, where the operator makes rebalancing decisions in the first stage without knowledge of the realization of customer demand, which happens in the second stage. The \ac{VF} of the second stage is approximated as a sum of contributions from rebalancing actions across spatial locations and across time steps, or in other words, as the sum of values of a net change in \acp{SV} at a particular place and a particular time. Crucially, the number of parameters needed for this grows only linearly in the number of coordinate variables -- number of stations times number of time steps -- rather than exponentially. By approximating the second-stage \ac{VF} in its own right, we separate the difficulties of the first and second stages. The first stage is difficult because it requires the solution of an integer routing and redeployment problem, while the second stage is stochastic in nature and thus is different for each possible realization of demand. Because the true \ac{VF} is agnostic to the way first-stage decisions are made, it can be approximated in its own right as an intermediate step, and then inserted as a cost-to-go in the first stage, which is then solved to generate a high-quality solution to the overall problem. In this paper, our approach therefore has two steps, \textit{(i)} approximating the second-stage \ac{VF}, possibly using a simplified model of the first stage, and \textit{(ii)} re-solving the first-stage problem using the approximate \ac{VF} as input data.

The notion of a separable \ac{VF} has been used in other shared mobility settings. Raviv \textit{et al.}~\cite{raviv_optimal_2013} proved that, if coupling between stations of a bike sharing scheme is ignored, the cost of unsatisfied demand can be written as a convex function of the inventory levels of each station. In the \emph{static} rebalancing problem, the operator must arrange the system in an optimal configuration for service rate, but does not intervene after customers start using the system. Separable \acp{VF} were studied in \cite{raviv_static_2013} for optimizing the overnight ``reset'' of a bike-sharing scheme before customers start using it in the morning. Pal and Zhang extended the static problem to station-free bike sharing systems in \cite{pal_free-floating_2017}. The static problem is substantially different from our dynamic setting, because customer actions do not occur simultaneously alongside the operator's actions. Separable functions were also used in a dynamic, deterministic-equivalent setting in \cite{pfrommer_dynamic_2014} for the problem of offering customer incentives to make modified journeys that are favourable to the system's service rate. Legros \cite{legros_dynamic_2018} has also computed station \acp{VF} and associated Bernoulli policies for dynamic bike sharing, but without explicit predictive modelling of the kind we include here. Brinkmann \textit{et al.}~used non-parametric \ac{VF} estimates, in a similar dynamic look-ahead context to ours, but without considering vehicle routing in the decision problem \cite{brinkmann_dynamic_2018}.

\subsection{Model scope}

We present a general formulation in which randomly-appearing customers rent generic \acp{SV} and the system operator uses generic \acp{RV}, which are able to hold one or more \acp{SV}, to rebalance the system. This is a flexible model that accommodates a wide range of different shared mobility systems of interest, for example:
\begin{enumerate}
\item Conventional bike sharing services, in which nodes of the \ac{SV} network represent docking stations, \acp{RV} represent trucks able to carry a finite number of bikes between docking stations, and each docking station has a finite capacity.
\item Station-free shared bike services such as LimeBike, Mobike, and Ofo, in which the number of ``notional stations'' tends to infinity as continuous space is divided into increasingly fine partitions, with a model node associated with each. Since such bikes can be left freely in public spaces, the capacity of the notional stations can be considered unbounded. The computational cost of solving the resulting decision problem will of course increase as space is divided into finer partitions.
\item Shared e-scooter services such as eCooltra and Bird. In the case of ``micro'' scooters, these distinguish themselves from bikes in that, owing to their small size, large numbers can in principle be repositioned by a single manned vehicle, i.e., no upper bound on the \acp{RV} capacity is necessary.
\item One-way car sharing services such as Zipcar Flex and Car2Go, in which only one \ac{SV} (rental car) can be transported at a time by a single staff member. In this setting, \ac{RV} movements in the model correspond to the movements of \emph{staff}, rather than trucks that can contain \acp{SV}.
\end{enumerate}

\subsection{Contributions}

Formally, we make the following contributions:
\begin{enumerate}
\item We provide a network flow formulation of \ac{DRRP} whose optimal value is the same as the formulation of \cite{ghosh_dynamic_2017}, but which has a tighter linear relaxation (meaning at least as tight, and often much tighter);
\item We derive a two-stage approximation scheme adapted from \cite{powell_learning_2004} that accommodates stochastic demand and journey values, and uses a separable \ac{VF} to represent the costs of customer actions as a function of the system operator's rebalancing actions;
\item We provide insight into the structure of the two-stage formulation, showing that both stages are closely related to network flow problems with integer capacity constraints;
\item We show that this scheme yields highly satisfactory system performance within computation times suitable for real-time implementation. We demonstrate the method using rigorous tests on standardized networks, and a case study using public data from Philadelphia's public bike sharing scheme.
\end{enumerate}

The paper is structured as follows. Section \ref{sec:PS} describes the flow formulation of \ac{DRRP}, and derives some basic results concerning its decomposition into two stages. Section \ref{sec:SA} outlines the iterative value function approximation used to solve the problem. Section \ref{sec:NR} presents numerical results for synthetic systems and a model of the Philadelphia network, and Section \ref{sec:C} concludes.

\section{Problem statement} \label{sec:PS}

In \ac{DRRP}, the operator must plan routes and load/unload schedules for a fleet $\mathcal{V}$ of operator vehicles over $T$ discrete time steps, in order to minimize a generalized cost function which includes penalties for unserved customer demand, costs for the movements of the operator's \acp{RV}, and the cost of loading and unloading \acp{SV} into and out of \acp{RV}.

\subsection{Definitions} \label{sec:defs}

Let $\mathcal{G}_\text{SV} := (\mathcal{N}_\text{SV},\mathcal{E}_\text{SV})$ be a directed graph describing all possible customer movements while using a \ac{SV}, such that movement between from node $i$ to $j$ in $\mathcal{N}_\text{SV}$ is only possible if $(i,j) \in \mathcal{E}_\text{SV}$. Let $f_{i,j}^{t,k}$ be the number of customers wishing to travel from node $i$ to node $j$ at time $t$, and with journey duration $k \in \{0,\ldots,K\}$ time steps for some maximum modelled duration $K$. We will use the tuple $(i,j,t,k)$ as shorthand, and we view $\mathcal{G}_\text{SV}$ and $f_{i,j}^{t,k}$ as fixed input data to the decision problem. 

Let the decision variable $w_{i,j}^{t,k} \in \mathbb{N}$ represent the number of journeys that do take place, and let the loss function $l_{i,j}^{t,k} : \mathbb{N} \rightarrow \mathbb{R}$ represent the social cost of potential journeys that do not occur, such that $l_{i,j}^{t,k}(f_{i,j}^{t,k} - w_{i,j}^{t,k})$ is the cost of unserved demand for tuple $(i,j,t,k)$. We assume that customer journey values are decoupled within and between such tuples, and that $l_{i,j}^{t,k}$ is defined to penalize journeys in increasing order of their value. Thus, $l_{i,j}^{t,k}(\cdot)$ can be represented as a convex piecewise affine function passing through the origin, evaluated only on its integer arguments.

We introduce stochasticity into the model by allowing scalars $f_{i,j}^{t,k}$ and functions $l_{i,j}^{t,k}(\cdot)$ to depend on a random variable $\xi$ sampled from a probability distribution with support $\Xi$. All other input data to the problem remains deterministic. Thus we have $f_{i,j}^{t,k} : \Xi \rightarrow \mathbb{N}$ and $l_{i,j}^{t,k} : \mathbb{N} \times \Xi \rightarrow \mathbb{R}$. We use $\mathbb{E}[\cdot]$ to denote expectation over $\xi$. In the remainder of this paper we use the variants $f_{i,j}^{t,k}$, $f_{i,j}^{t,k}(\xi)$, $l_{i,j}^{t,k}(\cdot)$, and $l_{i,j}^{t,k}(\cdot; \xi)$ according to whether or not $\xi$ is relevant to the discourse.

Let $\mathcal{G}_\text{RV} := (\mathcal{N}_\text{RV},\mathcal{E}_\text{RV})$ be a directed graph describing possible movements of a set $\mathcal{V}$ of \acp{RV}, such that a \ac{RV} can move from node $i$ to $j$ in $\mathcal{N}_\text{RV}$ only if $(i,j) \in \mathcal{E}_\text{RV}$. We assume that $\mathcal{G}_\text{RV}$ is fixed problem data, and has been constructed such that each \ac{RV} movement always takes exactly one time step.\footnote{Within the scope of discrete-time models, this choice is without loss of generality, and primarily for computational convenience. Where necessary to reflect real-world conditions, additional nodes and edges can be used to model \ac{RV} journeys taking multiple time steps.}  Let $z_{i,j}^t \in \mathbb{Z}$ be the integer decision variable specifying how many \acp{RV} in $\mathcal{V}$ should move from node $i$ to node $j$ at time $t$. The associated cost per \ac{RV} is denoted $c_{i,j}^t$.

\acp{RV} are used to reposition \acp{SV} by loading and unloading them at nodes in $\mathcal{N}_\text{RV} \cap \mathcal{N}_\text{SV}$, and travelling with them on board. For the problem to be non-trivial, we must assume that $\mathcal{N}_\text{RV} \cap \mathcal{N}_\text{SV} \neq \emptyset$ so that \acp{RV} are able to access at least some of the nodes in $\mathcal{N}_\text{SV}$. For simplicity we assume the \acp{RV} are identical, and that each is capable of carrying some integer number $\overline{b}$ of \acp{SV}.

At a time step $t$, integer variables $y_{i}^{+,t}$ and $y_{i}^{-,t}$ represent the loading and unloading of \acp{SV} onto and from \acp{RV} respectively, for $i \in \mathcal{N}_\text{RV} \cap \mathcal{N}_\text{SV}$. These loading and unloading actions incur a cost of $r_i^t$ per \ac{SV}. The number of \acp{SV} travelling along edge $(i,j) \in \mathcal{E}_\text{RV}$ inside \acp{RV} is denoted $b_{i,j}^t$, and this is bounded by the sum of capacities of the \acp{RV} moving on the same edge at the same time, i.e.~by $\overline{b}z_{i,j}^t$. The number of \acp{SV} at node $i \in \mathcal{N}_\text{SV}$, \emph{after} accounting for any journeys finishing during time step $t$, is denoted $d_i^t \in \mathbb{Z}$. The number of \acp{SV} that can be stored at any node (or ``station'') $i \in \mathcal{N}_\text{SV}$ is denoted $\overline{d}_i$.

The main items of notation introduced in this section are listed in Table \ref{tab:notation}, and the interaction of the graphs $\mathcal{G}_\text{SV}$ and $\mathcal{G}_\text{RV}$ is depicted in Fig.~\ref{fig:twographs}.

\begin{table}[t]
\caption{Notation introduced in Section \ref{sec:PS}}
{\small
\begin{center}
\begin{tabular}{|ll|ll|}
\hline
SV/RV & Shared vehicle/rebalancing vehicle & \multicolumn{2}{|l|}{\bt{Sets}} \\
&  & $(\mathcal{N}_\text{SV}, \mathcal{E}_\text{SV})$ & Nodes and edges of \ac{SV} graph $\mathcal{G}_\text{SV}$\\
\multicolumn{2}{|l|}{\bt{Stochastic program}} & $(\mathcal{N}_\text{RV}, \mathcal{E}_\text{RV})$ & Nodes and edges of \ac{RV} graph $\mathcal{G}_\text{RV}$\\
$\xi$ & Uncertainty realization & $\mathcal{V}$ & Set of \acp{RV}  \\
$V(y; \xi)$ & Second-stage value function under $\xi$ & $\Xi$ & Support of uncertainty realizations\\
& & & \\
\multicolumn{2}{|l}{\bt{Optimization variables}} & \multicolumn{2}{|l|}{\bt{Fixed parameters}} \\
\textbullet${}_{i,j}^{t,k}$ & Quantity referring to: & $l_{i,j}^{t,k}(\cdot; \xi)$ & Loss function for unmet demand \\
& \hspace{1cm}start time $t$, duration $k$ steps, & $f_{i,j}^{t,k}(\xi)$ & Demand for \ac{SV} journeys \\
& \hspace{1cm}start node $i$, end node $j$& $c_{i,j}^t$ & Cost of \ac{RV} journey   \\
$w_{i,j}^{t,k}$ & Completed \ac{SV} journeys & $r_i^t$ & Cost of \ac{SV} loading or unloading at node $i$\\
$z_{i,j}^t$ & Planned \ac{RV} journey & $r_p$ & Model penalty for violating \ac{SV} conserv.\\
$y_i^{+,t},y_i^{-,t}$  & Loading of \acp{SV} onto, from \acp{RV} at node $i$ & $\overline{y}$ & Largest (un)loading action considered\\
$d_i^t$ & Number of \acp{SV} present at node $i$ & $\overline{d}_i$ & Capacity for \acp{SV} at node $i$ \\
$b_{i,j}^t$ & Flow of \acp{SV} on \ac{RV} graph & $\overline{b}$ & Capacity of any \ac{RV} to carry \acp{SV}\\
$p_i^{+,t},p_i^{-,t}$ & \acp{SV} added/subtracted for model feasibility & & \\
\hline
\end{tabular}
\end{center}
}
\label{tab:notation}
\end{table}%

\begin{figure}[htbp]
\begin{center}
\includegraphics[width=0.6\textwidth]{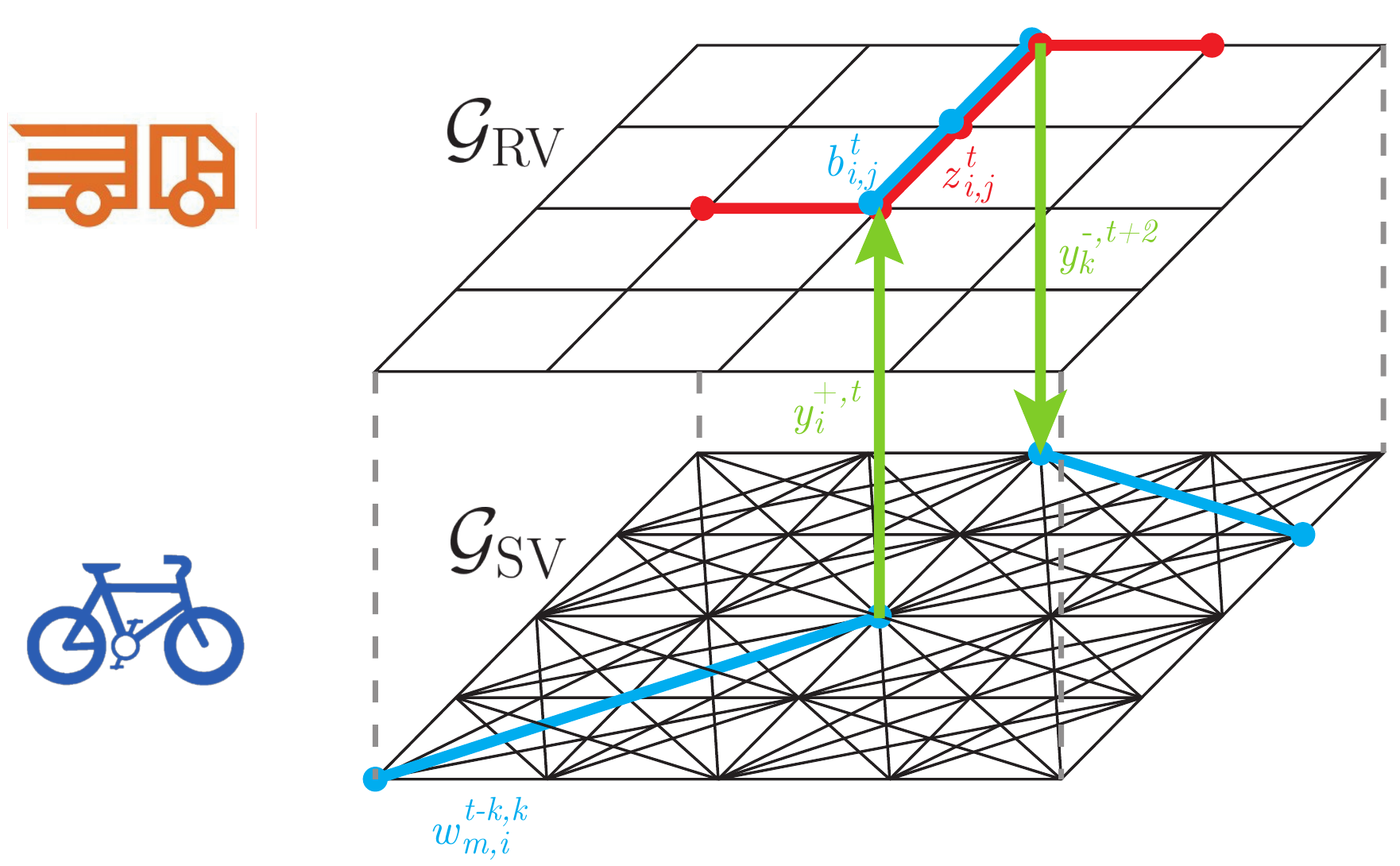}
\caption{Interaction of the graphs $\mathcal{G}_\text{SV}$ and $\mathcal{G}_\text{RV}$, with illustrative examples of indexed variables. While the \ac{SV} graph contains edges for journeys taking any number of time steps, the edges on the \ac{RV} graph are defined by movements that take precisely one time step. \ac{SV} movements are depicted in blue, \ac{RV} movements in red, and \ac{SV} load/unload actions in green. \ac{SV} movements can only occur on $\mathcal{G}_\text{RV}$ if accompanied by \acp{RV}.}
\label{fig:twographs}
\end{center}
\end{figure}

\subsection{Assumptions}

We make the following modelling assumptions:
\begin{enumerate}
\item[(A1)] A customer does not enter the system if (a) he cannot start his journey due to a lack of \acp{SV} available at his desired starting node, or if (b) doing so would cause the end node to exceed its \ac{SV} capacity. He does not attempt to start a journey from a different node. This simplifying assumption is discussed further after introducing our formulation, in Section \ref{sec:AddAss1}.
\item[(A2)] Journey times along edges in $\mathcal{E}_\text{RV}$ and $\mathcal{E}_\text{SV}$ are deterministic and independent of any of the actions we model, i.e., they are determined entirely by exogenous factors such as signals at junctions and the prevailing level of congestion. They are identical for all \acp{RV}, for whom the edges of $\mathcal{E}_\text{RV}$ always take $1$ time step to traverse, but they can differ between \acp{SV}, and this is modelled as demand characterized by different values of the lag index $k$. The characterization of travel times is allowed to vary with time step $t$.
\item[(A3)] The maximum number of \ac{SV} journeys starting at node $i$ and time $t$ is bounded by the number present at the end of the previous time step, $d_i^{t-1}$. In case there are insufficient shared vehicles at a node $i$ and time $t$ for the sum of all demand, i.e.~$d_i^{t-1} < \sum_{j,k}f_{i,j}^{t,k}$, the system operator can choose which journeys should take place. This contrasts with the approach of \cite{ghosh_dynamic_2017}, which can be viewed as conservative.\footnote{Ghosh \textit{et al.}~\cite{ghosh_dynamic_2017} impose an additional ``fair sharing'' constraint meaning that, in our notation, integer valued $w_{i,j}^{t,k}$ are no greater than their demand share of the $d_i^{t-1}$ available \acp{SV}. Thus, for example, if for some $(i,t,k)$ there are 9 \acp{SV} present, and 10 destinations $j$ for which $f_{i,j}^{t,k} = 1$, no journeys can occur at all, because one must enforce $w_{i,j}^{t,k} \leq \tfrac{9}{10}$ for each destination $j$.}
\end{enumerate}

\subsection{Network flow formulation of \ac{DRRP}}

For a given realization $\xi \in \Xi$, the \ac{DRRP} is presented in problem \eqref{eq:DRRP}. This can be viewed as a conversion of the \ac{DRRP} formulation of \cite[Table 4]{ghosh_dynamic_2017} into an equivalent form, in the sense that the two problems have the same optimal value and optimal solutions can be mapped from one form to the other. The system operator controls flows of \acp{RV} $z_{i,j}^t$, flows of \acp{SV} $b_{i,j}^t$, and load/unload decisions $y_i^{+,t}$ and $y_i^{-,t}$ at stations, on the assumption that customers then make optimal use of the resulting arrangement to complete their journeys. In our new formulation, the flows of \acp{SV} are ``nested'' within those of \acp{RV}, in the sense that on any edge $(i,j)$ at time $t$, \ac{SV} flows $b_{i,j}^t$ for rebalancing are constrained by the storage capacity of the \acp{RV} that are moving along the same edge, equal to $\overline{b}z_{i,j}^t$. The nesting effect is illustrated in Fig.~\ref{fig:twographs}.
\begin{subequations}\label{eq:DRRP}
\allowdisplaybreaks
\begin{align}
\min_{z,y,b,w,d} \,\, & \sum_{t=1}^T  \left[ \sum_{(i, j)\in \mathcal{E}_\text{SV}}   \sum_{k=0}^K l_{i,j}^{t,k} (f_{i,j}^{t,k}(\xi) - w_{i,j}^{t,k}; \xi) + \sum_{(i,j) \in \mathcal{E}_\text{RV}}  c_{i,j}^t z_{i,j}^t + \sum_{i \in \mathcal{N}_\text{SV} \cap \mathcal{N}_\text{RV}} r_i^t (y_i^{+,t} + y_i^{-,t})\right] \\
\text{s.~t.}\quad & d_i^t = d_i^{t-1} + \sum_{k=0}^K \left( \sum_{(j,i) \in \mathcal{E}_\text{SV}} w_{j,i}^{t-k,k} - \sum_{(i,j) \in \mathcal{E}_\text{SV}} w_{i,j}^{t,k} \right) + y_i^{-,t} - y_i^{+,t} \, , \,\, t=1,\ldots,T, \, \, \forall \, i \in \mathcal{N}_\text{SV} \, , \label{eq:DRRPNC}\\
& \sum_{(i,j)\in\mathcal{E}_\text{RV}} \!\!\! b_{i,j}^t = \sum_{(j,i)\in\mathcal{E}_\text{RV}} \!\!\! b_{j,i}^{t-1} + y_i^{+,t} - y_i^{-,t}\, , \,\, t=1,\ldots,T,\,\, \forall \, i \in \mathcal{N}_\text{RV} \, ,\label{eq:DRRPSVC}\\
& \sum_{(i.j)\in\mathcal{E}_\text{RV}} \!\!\! z_{i,j}^t = \sum_{(j,i)\in\mathcal{E}_\text{RV}} \!\!\! z_{j,i}^{t-1} \, , \,\, t = 1,\ldots,T, \,\, \forall \, i \in \mathcal{N}_\text{RV} \, , \label{eq:DRRPRVC}\\
& 0 \leq b_{i,j}^t \leq \overline{b} z_{i,j}^t , \, t=0,\ldots,T\,\,  \text{and} \,\, b_{i,j}^0 \in \mathbb{Z} \,\, \text{given,} \,\,\forall \, (i,j) \in \mathcal{E}_\text{RV} \,, \label{eq:DRRPbB} \\
& 0 \leq w_{i,j}^{t,k} \leq f_{i,j}^{t,k}(\xi) \,  , \,\, t=1,\ldots,T\,, \,\,k=0,\ldots,K \, , \nonumber \\
& \hspace{2cm} \text{and} \,\, w_{i,j}^{t,k} \,\, \text{given for} \,\,1 - K \leq t \leq 0 \, , \,\, -t < k \leq K \, , \,\forall \, (i,j) \in \mathcal{E}_\text{SV} \,, \label{eq:DRRPwB}\\
& 0 \leq d_i^t \leq \overline{d}_i\,  , \,\, t=0,\ldots,T\,\, \text{and}\,\, d_i^0 \in \mathbb{Z} \,\, \text{given,} \,\, \forall \, i \in \mathcal{N}_\text{SV} \, , \label{eq:DRRPdB}\\
& 0 \leq y_i^{+,t} \leq \overline{y} \, , \quad 0 \leq y_i^{-,t} \leq \overline{y} \, , \,\, t = 1, \ldots, T \, , \,\, \forall \, i \in \mathcal{N}_\text{SV} \cap \mathcal{N}_\text{RV} \, , \label{eq:DRRPyB}\\
& z_{i,j}^0 \,\, \text{given such that}\,\,\sum_{i,j} z_{i,j}^0 = |\mathcal{V}| \, , \,\, \forall \, (i,j) \in \mathcal{E}_\text{RV}\label{eq:DRRPzB} \\
& w_{i,j}^{t,k} \in \mathbb{Z}  \,\, \text{for all tuples $(i,j,t,k)$.} \label{eq:DRRP2Int3} \\
& y_i^{+,t}, y_i^{-,t}, b_{i,j}^t \in \mathbb{Z}  \,\, \text{for all indices.} \label{eq:DRRP2Int2} \\
& z_{i,j}^t \in \mathbb{Z} \, , \quad t = 1, \ldots, T, \,\,  \forall \, (i,j) \in \mathcal{E}_\text{RV}\, , \label{eq:DRRP2Int}
\end{align}
\end{subequations}

Constraint \eqref{eq:DRRPNC} enforces conservation of \acp{SV} at each customer node $i$ and time $t$. The model allows for \acp{SV} to ``jump'' from station $i$ to $j$ during a time step, modelling very short journeys as duration $k = 0$ discrete steps.

Constraint \eqref{eq:DRRPSVC} enforces the conservation of \ac{SV} flows on $\mathcal{G}_\text{RV}$, i.e., while the \acp{SV} are loaded inside \acp{RV}. Constraint \eqref{eq:DRRPRVC} enforces conservation of the \acp{RV} themselves, and \eqref{eq:DRRPbB} bounds the number of \acp{SV} travelling along an edge in $\mathcal{E}_\text{RV}$  by the total capacity of \acp{RV} travelling along the same edge. Constraints \eqref{eq:DRRPSVC} and \eqref{eq:DRRPbB} together prevent transport of \acp{SV} on edges $\mathcal{E}_\text{RV}$ unless an \ac{RV} is present at the right node and time.

Constraint \eqref{eq:DRRPwB} bounds the number of journeys by the level of demand present for each tuple $(i,j,t,k)$. As our formulation is intended for a real-time implementation, we allow some journeys already to be in progress at time $0$, with this having implications for constraint \eqref{eq:DRRPNC}. Constraint \eqref{eq:DRRPdB} bounds the number of \acp{SV} that can be accommodated at each node $i \in \mathcal{N}_\text{SV}$, and specifies the initial ``fill level'' at each node. Constraint \eqref{eq:DRRPyB} limits loading and unloading actions to be nonnegative, and upper-bounds them by some constant $\overline{y} \leq \overline{b}|\mathcal{V}|$ representing the largest single rebalancing action contemplated by the operator.

Constraint \eqref{eq:DRRPzB} specifies that \ac{RV} flows must correspond to integer-valued numbers of \acp{RV}, with initial conditions corresponding to the number of \acp{RV} $|\mathcal{V}|$ present. Constraints \eqref{eq:DRRP2Int} and \eqref{eq:DRRP2Int2} specify that \ac{SV} transport flows, \ac{RV} movement decisions, \ac{SV} loading/unloading actions, and customer journeys are all integer-valued.

For brevity we use symbols $z,y,b,w,d$ to refer to concatenations of their corresponding indexed quantities.\\

\begin{rem}
In addition to the social cost of unserved demand, $\sum_{i,j,t,k}l_{i,j}^{t,k}(w_{i,j}^{t,k})$, one may also be interested in the \textit{service rate}, defined as the ratio of completed journeys to total demand, $\sum_{i,j,t,k}w_{i,j}^{t,k} / \sum_{i,j,t,k}f_{i,j}^{t,k}$. This metric has the convenience of being invariant to system size. The service rate can be accommodated as an objective function by setting $l_{i,j}^{t,k}(x; \xi) = x$ and all other costs to zero. The objective function to minimize, when normalized by $\sum_{i,j,t,k} f_{i,j}^{t,k}(\xi)$, will then be equal to $1$ minus the service rate.
\end{rem}

Finally, we prove two elementary results. The first is that although the new formulation allows $y_i^{+,t} > 0$ and $y_i^{-,t} > 0$ simultaneously in feasible solutions, optimal solutions can always be generated that do not feature this. The second is that there always exists an optimal solution for which $y_i^{+,t} = y_i^{-,t} = 0$ if no \acp{RV} are present at node and time $(i,t)$. Thus optimal solutions that obey the physical constraints of \acp{RV} transporting \acp{SV} can always be generated, assuming of course the problem is feasible in the first place.\\

\begin{lem} \label{lem:minimal_y_orig}
There always exists an optimizer of \eqref{eq:DRRP} with $y$ variables taking the form $(y_i^{+,t},0)$ or $(0,y_i^{-,t})$ for all tuples $(i,t)$. If $r_i^t > 0$ for all tuples $(i,t)$, then no optimal solution has $y_i^{+,t} > 0$ and $y_i^{-,t} > 0$ simultaneously for any $(i,t)$.
\end{lem}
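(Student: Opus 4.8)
The plan is to use a standard cancellation (exchange) argument, exploiting the fact that the load and unload variables at a given node and time enter the coupling constraints \emph{only through their difference}. Concretely, I would first catalogue every place the variables $y_i^{+,t}$ and $y_i^{-,t}$ appear in \eqref{eq:DRRP}. They enter the \ac{SV} conservation constraint \eqref{eq:DRRPNC} only via the term $y_i^{-,t} - y_i^{+,t}$, and the on-board \ac{SV} flow conservation \eqref{eq:DRRPSVC} only via $y_i^{+,t} - y_i^{-,t}$; beyond this they appear solely in the box bounds \eqref{eq:DRRPyB}, the integrality requirement \eqref{eq:DRRP2Int2}, and the objective term $r_i^t(y_i^{+,t}+y_i^{-,t})$. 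The key observation is therefore that any perturbation preserving the difference $y_i^{+,t}-y_i^{-,t}$ at every $(i,t)$ leaves all the coupling constraints, and the optimal values of all other variables, completely untouched.

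Given any optimizer, I would then define for each tuple $(i,t)$ the quantity $\delta_{i,t} := \min(y_i^{+,t}, y_i^{-,t})$ and replace $y_i^{+,t} \leftarrow y_i^{+,t} - \delta_{i,t}$ and $y_i^{-,t} \leftarrow y_i^{-,t} - \delta_{i,t}$, leaving $z,b,w,d$ unchanged. I would verify feasibility point by point: the difference $y_i^{+,t}-y_i^{-,t}$ is preserved, so \eqref{eq:DRRPNC} and \eqref{eq:DRRPSVC} still hold with the same $d$ and $b$; both modified variables remain nonnegative (we subtract the smaller of the two) and no larger than before, so \eqref{eq:DRRPyB} is preserved; and since $y_i^{+,t},y_i^{-,t}\in\mathbb{Z}$ the shift $\delta_{i,t}$ is integer, so \eqref{eq:DRRP2Int2} is preserved. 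By construction, after the shift at least one of $y_i^{+,t},y_i^{-,t}$ equals zero, which is exactly the asserted form $(y_i^{+,t},0)$ or $(0,y_i^{-,t})$.

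For the objective, the term $r_i^t(y_i^{+,t}+y_i^{-,t})$ decreases by $2r_i^t\delta_{i,t}\geq 0$, while every other objective term is unchanged; hence the perturbed solution has objective value no greater than the original and is therefore also optimal, establishing the first claim. For the second claim I would argue by contradiction: if some optimal solution had $y_i^{+,t}>0$ and $y_i^{-,t}>0$ at a tuple with $r_i^t>0$, then $\delta_{i,t}>0$ and the same shift would \emph{strictly} reduce the objective by $2r_i^t\delta_{i,t}>0$, contradicting optimality. I do not anticipate a genuine obstacle here; the only step requiring care is the exhaustive check that $y^{+}$ and $y^{-}$ appear nowhere except through their difference (plus the separable box, integrality, and cost terms), since the whole argument collapses if some overlooked constraint couples them in another way.
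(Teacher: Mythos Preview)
Your proposal is correct and follows essentially the same approach as the paper: subtract $\min\{y_i^{+,t},y_i^{-,t}\}$ from both loading variables, observe that feasibility is preserved because these variables enter the coupling constraints only through their difference, and conclude that the objective does not increase (respectively, strictly decreases when $r_i^t>0$). The paper's proof is a one-line sketch of precisely this exchange argument, whereas you spell out the constraint-by-constraint feasibility check in more detail.
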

\begin{proof} By inspecting the constraints and recalling that the cost coefficients satisfy $r_i^t \geq 0$, it is immediate that for any elements $(y_i^{+,t},y_i^{-,t})$ of a feasible solution to \eqref{eq:stage1_approx}, the modification $(y_i^{+,t} - \min\{y_i^{+,t},y_i^{-,t}\},$ $y_i^{-,t} - \min\{y_i^{+,t},y_i^{-,t}\})$ is feasible and has no greater cost. The second claim follows from the fact that the modification results in strictly lower cost.
\end{proof}

\begin{lem} \label{lem:y_only_with_rv}
If no \ac{RV} passes through node $i$ at time $t$, then for any instance of problem \eqref{eq:DRRP} there is an optimal solution such that $y_i^{+,t} = y_i^{-,t} = 0$, i.e.~that load/unload actions only take place where there are \acp{RV}. If $r_i^t > 0$ for all tuples $(i,t)$, then no optimal solution has $y_i^{+,t} = y_i^{-,t} > 0$ for any $(i,t)$ where no \ac{RV} is present.
\end{lem}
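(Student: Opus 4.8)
The plan is to use the shared-vehicle conservation constraint \eqref{eq:DRRPSVC} to show that, with no RV at node $i$ during step $t$, the loading and unloading variables are forced to be equal, and then to invoke Lemma \ref{lem:minimal_y_orig} to drive both to zero.

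First I would translate the hypothesis ``no RV passes through node $i$ at time $t$'' into the decision variables. By the RV conservation constraint \eqref{eq:DRRPRVC}, the number of RVs arriving at node $i$ at the start of step $t$, namely $\sum_{(j,i)} z_{j,i}^{t-1}$, equals the number leaving, $\sum_{(i,j)} z_{i,j}^t$, and the hypothesis asserts this common value is zero. Since the bound \eqref{eq:DRRPbB} can be satisfied only when $z_{i,j}^t \geq 0$ on every edge, each term $z_{j,i}^{t-1}$ and $z_{i,j}^t$ must individually vanish.

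Next I would propagate this to the SV transport flows. Applying \eqref{eq:DRRPbB} at time $t$ on every outgoing edge gives $b_{i,j}^t = 0$, and applying it at time $t-1$ on every incoming edge gives $b_{j,i}^{t-1} = 0$. Substituting both into \eqref{eq:DRRPSVC} at $(i,t)$ collapses that equation to $0 = y_i^{+,t} - y_i^{-,t}$, i.e.\ $y_i^{+,t} = y_i^{-,t}$. This is the heart of the argument: with no vehicle present there is nothing onto which to load or from which to unload, so the formulation admits only a spurious matched pair of actions.

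Finally I would apply Lemma \ref{lem:minimal_y_orig}. Because \eqref{eq:DRRPNC} and \eqref{eq:DRRPSVC} involve these variables only through the difference $y_i^{-,t} - y_i^{+,t}$, replacing $(y_i^{+,t}, y_i^{-,t})$ by $(0,0)$ leaves that difference unchanged, hence preserves feasibility, while lowering the objective by $r_i^t(y_i^{+,t} + y_i^{-,t}) \geq 0$; this produces an optimizer with $y_i^{+,t} = y_i^{-,t} = 0$. When $r_i^t > 0$ the decrease is strict whenever $y_i^{+,t} = y_i^{-,t} > 0$, so no such solution can be optimal, giving the second claim. The step I would watch most carefully is the index bookkeeping: the incoming flow $b_{j,i}^{t-1}$ sits one step earlier than the conservation equation at $(i,t)$, so the capacity bound \eqref{eq:DRRPbB} must be invoked at the correct time index to annihilate both the incoming and outgoing flows.
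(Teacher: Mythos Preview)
Your proof is correct and follows essentially the same approach as the paper: use the absence of \acp{RV} together with \eqref{eq:DRRPbB} to force the $b$-flows at $(i,t)$ to zero, deduce $y_i^{+,t}=y_i^{-,t}$ from \eqref{eq:DRRPSVC}, and then invoke the cost-reduction argument of Lemma~\ref{lem:minimal_y_orig}. The paper's proof is terser (it simply asserts that \eqref{eq:DRRPSVC} implies $y_i^{+,t}=y_i^{-,t}$), whereas you spell out the intermediate step through the $z$- and $b$-variables and the time-index bookkeeping, but the logic is identical.
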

\begin{proof} If no \ac{RV} passes through node $i$ at time $t$, then constraint \eqref{eq:DRRPSVC} implies $y_i^{+,t} = y_i^{-,t}$. If for some tuple $(i,t)$ $y_i^{+,t} = y_i^{-,t} > 0$ in an optimal solution, then both can be reduced to zero without affecting feasibility. The second claim follows from the fact that reducing both to zero strictly lowers the cost, thus the original solution could not have been optimal.
\end{proof}

\subsubsection{Strength of relaxation} \label{sec:rel_strength}

Numerical tests show the \ac{LP} relaxation of problem \eqref{eq:DRRP} to be at least as tight, and very often much tighter, than that of the mixed-integer formulation presented in \cite[Table 4]{ghosh_dynamic_2017}. A simple example provides intuition as to why this should be the case. We note that the formulation in \cite{ghosh_dynamic_2017} uses only a single variable $d_v^{*,t}$ to represent the number of \acp{SV} inside a \ac{RV} $v$ at time $t$, but multiple variables $y_{i,v}^{+,t}$, $y_{i,v}^{-,t}$ linked by a single constraint \cite[constraint (4)]{ghosh_dynamic_2017} to update $d_v^{*,t}$ after load/unload actions anywhere on the network. 

Consider a 3-node bike-sharing system shown in Fig.~\ref{fig:3node}, in which a single truck able to carry at least 2 bikes starts at node 2 at $t=0$, and wants to bring a bike from node 1 to node 3 where it is needed to satisfy a unit of demand at time $t=2$. If respecting real-world constraints, it clearly takes 3 time steps to for the truck to reach node 1 and then bring the bike to node 3. In the LP relaxation of \cite[Table 4]{ghosh_dynamic_2017}, the truck splits into two, $z_{2,1}^1 = z_{2,3}^1 = \tfrac{1}{2}$, and loads the bike onto the ``half-truck'' at node 1 at the next time step. It can simultaneously drop the bike at node 3, because in that formulation the locations of bikes \emph{loaded on trucks} are not represented, and the move is consistent with \cite[constraint (4)]{ghosh_dynamic_2017}. Thus the bike in effect jumps from node 1 to node 3, and the demand there will be served. The move is only feasible because there is ``half a truck'' at nodes 1 and 3 at the same time. In contrast, in the LP relaxation of problem \eqref{eq:DRRP} the bike itself must take 2 time steps to travel from nodes 1 to 3, as this would otherwise violate the constraints on \ac{SV} flow variables $b_{i,j}^t$. Thus the bike cannot be provided at node 3 in time to service the demand. In short, formulation \eqref{eq:DRRP}, despite returning the same objective when integrality constraints are enforced, prevents the ``teleportation'' of \acp{SV} when these are relaxed, and its \acp{LP} relaxation therefore does not overstate the abilities of the \acp{RV} as much.

\begin{figure}[tbp]
\begin{center}
\includegraphics[width=0.6\textwidth]{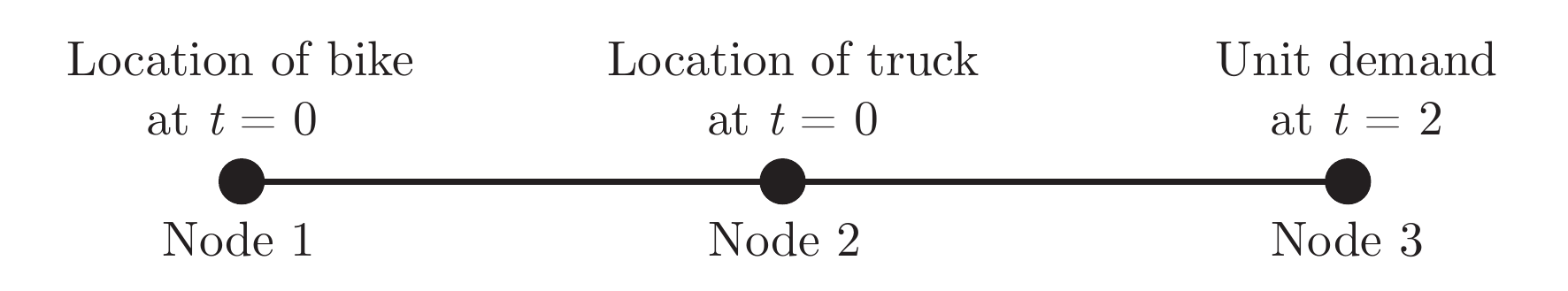}
\caption{3-node network used in Section \ref{sec:rel_strength} to illustrate the difference between LP relaxations.}
\label{fig:3node}
\end{center}
\end{figure}

We verified the intuition by testing the strength of the relaxation on deterministic benchmark instances of a bike-sharing problem, which were created via the same procedure and parameters described in Section \ref{sec:test_networks}, and solved on the same hardware, as for the main numerical results of this paper. For a fair comparison with \cite{ghosh_dynamic_2017}, all loss functions were modelled as deterministic linear functions $l_{i,j}^{t,k}(x) = x$, and the ``fair sharing'' constraint \cite[eq.~(3)]{ghosh_dynamic_2017} was not enforced.\footnote{Qualitatively similar results were obtained when this constraint was added to both models, but we omit these for brevity.} Table \ref{tab:relax} shows average results over 10 instances for each row, using the standardized test networks described in Section \ref{sec:test_networks}. Problems were solved in Gurobi to within a relative tolerance of $10^{-3}$, and a time limit of 600 seconds was enforced. The \textit{LP gap} columns report the average percentage decrease in solver objective when integrality constraints were relaxed, relative to the best solution of \eqref{eq:DRRP} found in the time limit. In the \textit{Solution time} columns, the \textit{LP} columns indicate time for the relaxations, and the others indicate MIP solve times. Perhaps as a side effect of the tighter \ac{LP} relaxation the full MIP in our formulation was also solved considerably faster on average, except for very simple problems with only one \ac{RV}. The final two columns indicate respectively the average service rates with no operator intervention and with interventions returned by solving \eqref{eq:DRRP} until timeout or an optimal solution was found. Test results are shown for sizes of system that could be solved directly as an MIP across multiple instances within a reasonable time.

\begin{table}[tbp] 
\caption{Comparison of LP relaxation strengths of formulations \eqref{eq:DRRP} and \cite[Table 4]{ghosh_dynamic_2017}. Results are averaged over 10 random instances. The better (lower) value between the two formulations is highlighted in bold.}
\begin{center}
{\footnotesize
\begin{tabular}{cc|rr|rr|rr|r|rr}
\toprule
   &   &   \multicolumn{2}{|c|}{Rel.~LP gap (\%)}  & \multicolumn{4}{|c|}{Average solution time (seconds)} &  Expected &  \multicolumn{2}{|c}{Service rate (\%)} \\
$|\mathcal{N}_\text{SV}|$ & $|\mathcal{V}|$ &  \hspace{0.5cm}\eqref{eq:DRRP} &  \cite{ghosh_dynamic_2017} & \eqref{eq:DRRP} LP & \eqref{eq:DRRP} &  \cite{ghosh_dynamic_2017} LP & \cite{ghosh_dynamic_2017} &  demand &  No action  & \eqref{eq:DRRP} \\
\midrule
4  & 1 &  26.8      & 26.8 & 0.003 &    0.02 &  \bt{0.002} & \bt{0.01} &        31.4 &          93.2 &         98.9 \\
9  & 1 &  \bt{25.7} & 34.9 & 0.015 & \bt{0.32} &  \bt{0.007} &    0.36 &        76.2 &          85.9 &         98.1 \\
   & 3 &  \bt{8.5}  & 21.5 & 0.016 & \bt{0.28} &      0.016 &   21.11 &           ''  &             ''  &         99.5 \\
16 & 1 &  \bt{51.4} & 58.8 & 0.060 & 3.44      &  \bt{0.024} & \bt{2.23} &       151.8 &          84.3 &         93.8 \\
   & 3 &  \bt{3.8}  & 10.3 & \bt{0.054} &   \bt{88.76} &  0.058 &  249.06 &          ''   &           ''    &         98.3 \\
25 & 1 &  \bt{30.7} & 57.4 &     0.135 &  \bt{10.61} &  \bt{0.072} &   15.09 &       232.2 &          80.9 &         88.4 \\
   & 3 &  \bt{27.4} & 29.9 & 0.138 &  \bt{194.64} & \bt{0.119} &  356.16 &          ''   &         ''      &         95.4 \\
   & 5 &  \bt{15.1} & 18.0 & \bt{0.144} &  \bt{213.47} &      0.205 &  493.22 &      ''       &      ''         &         97.8 \\
36 & 1 &  \bt{19.2} & 39.6 & 0.233 &   12.86 &  \bt{0.126} & \bt{7.60} &       321.5 &          81.5 &         86.9 \\
   & 3 &  \bt{35.8} & 49.0 & 0.298 &  \bt{382.59} &  \bt{0.247} &  398.18 &          ''   &          ''     &         93.7 \\
   & 5 &  \bt{24.1} & 32.0 & \bt{0.292} &  \bt{182.90} &      0.388 &  531.38 &       ''      &       ''        &         96.6 \\
   & 7 &  \bt{7.6}  & 20.8 & \bt{0.290} &  \bt{154.87} &      0.581 &  528.69 &        ''     &       ''        &         97.1 \\
\bottomrule
\end{tabular}
}
\end{center}

\label{tab:relax}
\end{table}

%

\subsection{Two-stage stochastic program}

The operator wishes to solve \eqref{eq:DRRP} to minimize expected costs over all possible realizations of $\xi$. We consider a setting in which the operator chooses decisions $z$ and $y$ before observing $\xi$, and for the purposes of optimization assumes that $z$ and $y$ are fixed decisions over the planning horizon. In stochastic programming terminology, the decision-maker optimizes without recourse. In an industrial implementation, we intend that our formulation would be used in a rolling look-ahead context, in which only the initial step, or steps, of $z$ and $y$ are carried out before a re-optimization takes place. The new optimization problem will be re-parameterized using the new system state and elements of $\xi$ observed since the last one was carried out. Thus, although there are no explicit recourse variables in our formulation, a form of recourse arises from the re-optimizations that are carried out in real time.

The decision problem can be cast as a two-stage stochastic program. The first stage is
\begin{subequations} \label{eq:stage1}
\allowdisplaybreaks
\begin{align}
    \min_{z,y,b} \quad & \sum_{t=1}^T \! \left[ \sum_{(i,j) \in \mathcal{E}_\text{RV}} \!\!\! c_{i,j}^t z_{i,j}^t + \sum_{i \in \mathcal{N}_\text{SV} \cap \mathcal{N}_\text{RV}} r_i^t (y_i^{+,t} + y_i^{-,t}) \right] + \mathbb{E}[V(y, \xi)] \label{eq:stage1_obj} \\
    \text{s.~t.}\quad & \sum_{(i,j)\in\mathcal{E}_\text{RV}} \!\!\! b_{i,j}^t = \sum_{(j,i)\in\mathcal{E}_\text{RV}} \!\!\! b_{j,i}^{t-1} + y_i^{+,t} - y_i^{-,t}\, , \,\, t=1,\ldots,T,\,\, i \in \mathcal{N}_\text{RV} \, ,\label{eq:DRRPSVC2}\\
& \sum_{(i.j)\in\mathcal{E}_\text{RV}} \!\!\! z_{i,j}^t = \sum_{(j,i)\in\mathcal{E}_\text{RV}} \!\!\! z_{j,i}^{t-1} \, , \,\, t = 1,\ldots,T, \,i \in \mathcal{N}_\text{RV} \, , \label{eq:DRRPRVC2}\\
& 0 \leq b_{i,j}^t \leq \overline{b} z_{i,j}^t , \, t=0,\ldots,T\,\, \text{and} \,\, b_{i,j}^0 \in \mathbb{Z} \,\, \text{given,} \label{eq:DRRPbB2} \\
& 0 \leq y_i^{+,t} \leq \overline{y} \, , \quad 0 \leq y_i^{-,t} \leq \overline{y} \, , \,\, t = 1, \ldots, T \, , \,\, i \in \mathcal{N}_\text{SV} \, , \label{eq:DRRPyB2}\\
& z_{i,j}^0 \in \mathbb{Z} \,\, \text{given such that}\,\,\sum_{i,j} z_{i,j}^0 = |\mathcal{V}| \, , \label{eq:DRRPzB2} \\
& y_i^{+,t}, y_i^{-,t}, b_{i,j}^t \in \mathbb{Z}  \,\, \text{for all indices.} \label{eq:DRRP2Int2a} \\
& z_{i,j}^t \in \mathbb{Z} \, , \quad t = 1, \ldots, T, \,\,  (i,j) \in \mathcal{E}_\text{RV}\, , \label{eq:DRRP2Inta}
\end{align}
\end{subequations}
in which $V(y,\xi)$ represents the second-stage cost resulting from rebalancing actions $y$, for demand realization $\xi$. The second-stage problem, in which $\xi$ appears as a fixed parameter, is
\begin{subequations} \label{eq:stage2}
\allowdisplaybreaks
\begin{align}
    V(y, \xi) :=  \min_{w} \quad & \sum_{t=1}^T \sum_{(i, j)\in \mathcal{E}_\text{SV}} \sum_{k=0}^K l_{i,j}^{t,k} (f_{i,j}^{t,k}(\xi) - w_{i,j}^{t,k}; \xi) \label{eq:DRRPS2Obj}\\
    \text{s.~t.}\quad & 0 \leq d_i^0 + \sum_{\tau=1}^t\left[ \sum_{k=0}^K \left( \sum_{(j,i) \in \mathcal{E}_\text{SV}} \!\!\! w_{j,i}^{\tau-k,k} - \!\!\! \sum_{(i,j) \in \mathcal{E}_\text{SV}} \!\!\!\!\! w_{i,j}^{\tau,k} \right) \!\! + y_i^{-,\tau} \! - \! y_i^{+,\tau} \right] \leq \overline{d}_i \,,  t=1,\ldots,T, \, i \in \mathcal{N}_\text{SV}, \label{eq:DRRPNC2}\\
    & 0 \leq w_{i,j}^{t,k} \leq f_{i,j}^{t,k}(\xi) \,  , \,\, t=1,\ldots,T\,, \,\,k=0,\ldots,K \, , \nonumber \\
& \hspace{2cm} \text{and} \,\, w_{i,j}^{t,k} \,\, \text{given for} \,\,1 - K \leq t \leq 0 \, , \,\, -t < k \leq K \, , \,\forall \, (i,j) \in \mathcal{E}_\text{SV} \,, \label{eq:DRRPwB2}\\
& w_{i,j}^{t,k} \in \mathbb{Z}  \,\, \text{for all tuples $(i,j,t,k)$.} \label{eq:DRRP2Int3a}
\end{align}
\end{subequations}

Constraint \eqref{eq:DRRPNC2} combines \eqref{eq:DRRPSVC} and \eqref{eq:DRRPdB}. This eliminates the station fill level variables $d_i^t$, which are used in \eqref{eq:DRRP} only for exposition, by writing $d_i^t  = d_i^0 + \sum_{\tau=1}^t\left[ \sum_{k=0}^K \left( \sum_{(j,i) \in \mathcal{E}_\text{SV}} \!\! w_{j,i}^{\tau-k,k} - \sum_{(i,j) \in \mathcal{E}_\text{SV}} \!\! w_{i,j}^{\tau,k} \right) + y_i^{-,\tau} - y_i^{+,\tau} \right]$.

\subsection{Second stage \ac{LP} relaxation and feasibility}

We now prove an important, advantageous property of the second stage in our formulation, and then discuss the feasibility of problem \eqref{eq:stage2}.\\

\begin{prop}[Tight LP relaxation] \label{prop:stage2int}
If the constraints of problem \eqref{eq:stage2} have integer-valued right-hand sides, the problem has an integer-valued solution even when when integrality constraint \eqref{eq:DRRP2Int3a} is relaxed.
\end{prop}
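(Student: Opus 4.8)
The plan is to recognize the relaxation of \eqref{eq:stage2} as the linear relaxation of a minimum-cost flow problem on the time-expanded \ac{SV} graph, for which integrality of an optimal solution is classical. First I would undo the elimination of the fill-level variables: constraint \eqref{eq:DRRPNC2} is precisely the flow-conservation equalities \eqref{eq:DRRPNC} together with the box constraints \eqref{eq:DRRPdB} on the $d_i^t$, so reintroducing $d$ costs nothing and exposes the network structure. Because we are in the second stage, the loading actions $y_i^{+,t}, y_i^{-,t}$ are fixed, and under the hypothesis that all right-hand sides are integer-valued, the data $f_{i,j}^{t,k}$, $\overline{d}_i$, $d_i^0$, and the fixed $y$ are all integers.

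Next I would make the flow network explicit. Take time-space nodes $(i,t)$ for $i \in \mathcal{N}_\text{SV}$ and $t = 0,\ldots,T$; introduce a journey arc carrying $w_{i,j}^{t,k}$ from $(i,t)$ to $(j,t+k)$ and a holding arc from $(i,t)$ to $(i,t+1)$ carrying the fill level $d_i^t$. Then \eqref{eq:DRRPNC} is exactly conservation at each node, with the fixed quantities $y_i^{-,t} - y_i^{+,t}$ and the initial fills $d_i^0$ acting as integer external supplies and demands, while the bounds $0 \le w_{i,j}^{t,k} \le f_{i,j}^{t,k}$ and $0 \le d_i^t \le \overline{d}_i$ are arc capacities. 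The equality block is the node--arc incidence matrix of a directed graph and is therefore totally unimodular, so together with the integer right-hand sides the feasible polyhedron is integral, i.e.\ all of its vertices are integer-valued.

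The hard part is that the objective \eqref{eq:DRRPS2Obj} is not linear: each term $l_{i,j}^{t,k}(f_{i,j}^{t,k} - w_{i,j}^{t,k};\xi)$ is convex and piecewise affine, so total unimodularity alone only guarantees integer vertices and does not by itself place an optimizer at such a vertex. I would resolve this using separability and convexity. Since $l_{i,j}^{t,k}$ depends on the single variable $w_{i,j}^{t,k}$, is convex, and has integer breakpoints (it is evaluated only at integers and penalizes journeys in increasing order of value), each journey arc can be split into unit-capacity parallel arcs whose marginal costs are non-decreasing; filling the cheaper segments first reproduces the convex cost exactly. This yields an ordinary linear minimum-cost flow with a totally unimodular constraint matrix and integer capacities and supplies, whose relaxation attains its optimum at an integer vertex. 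Equivalently, one may cite the standard result that a minimum-cost flow problem with separable convex arc costs and integer data admits an integral optimal flow \cite{ahuja_network_1993}.

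Finally I would translate back: the $w$-components of this integral optimal flow solve the relaxation of \eqref{eq:stage2}, and because $d_i^t$ is an integer affine function of the (now integer) $w$, the fixed integer $y$, and integer $d_i^0$, the reintroduction of $d$ is consistent. Hence the relaxed problem \emph{has} an integer-valued optimal solution, as claimed. The only points requiring care, both routine, are checking that the incidence matrix is indeed the equality block (so that total unimodularity applies) and that the breakpoint splitting preserves integer capacities.
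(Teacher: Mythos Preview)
Your proposal is correct and follows essentially the same approach as the paper: both reintroduce the fill-level variables as holding arcs on a time-expanded graph, model $w_{i,j}^{t,k}$ as journey arcs with fixed integer supplies from $y$ and $d_i^0$, split each convex piecewise-linear loss into parallel unit-capacity arcs with non-decreasing marginal costs, and then invoke integrality of min-cost flows with integer data. The paper states the argument in direct min-cost-flow language (with an explicit sink node and treatment of in-progress journeys) rather than via total unimodularity, but the content is the same.
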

\begin{proof}
We show that the optimization over $w$ can be written as a min-cost flow problem \cite[\S 5]{ahuja_network_1993} over a directed graph with integer-valued sources, sinks, and edge capacities. It is widely known \cite[\S 5.5]{powell_stochastic_2003} that this guarantees the existence of an integer-valued \ac{LP} solution.

We first note that since $l_{i,j}^{t,k}(\, \cdot \, , \xi)$ is convex and piecewise linear,  with breakpoints at every integer argument $\{1, \ldots, f_{i,j}^{t,k}(\xi)\}$, it can be represented by the cost of the flow shown in Fig.~\ref{fig:s2flowa}, in which each edge supports a unit of demand, with cost equal to the negated gradient of the relevant segment of $l_{i,j}^{t,k}(\, \cdot \, , \xi)$. In any optimal solution to the flow problem, the edges with the most negative cost, corresponding to the most valuable journeys and the steepest segments of $l_{i,j}^{t,k}(\, \cdot \, , \xi)$, will be used first. More precisely, the edge flows model $l_{i,j}^{t,k}(f_{i,j}^{t,k}(\xi) - w_{i,j}^{t,k} , \xi) - l_{i,j}^{t,k}(f_{i,j}^{t,k}(\xi),\xi)$, the latter term being the cost of leaving all customers unserved.

Second, the fill levels of stations can be represented as shown in Fig.~\ref{fig:s2flowb}. The flow on each horizontal link equals $d_i^t$ and is constrained to the interval $[0, \overline{d}_i]$. At each time step it receives contributions from arriving and departing \acp{SV}, each modelled as described in the previous paragraph, and is subject to load/unload actions $y_i^{+,t}$ and $y_i^{-,t}$. To avoid overcrowding only $y_i^{+,1}$ and $y_i^{-,1}$ are labelled in the figure.

\ac{SV} flows already in progress, $w_{i,j}^{t,k}$ for $1-K \leq t \leq 0$, whose values are fixed data, are modelled as fixed supply nodes connected to the relevant receiving node of Fig.~\ref{fig:s2flowb}. \ac{SV} flows for which $t+k > T$ are modelled as in Fig.~\ref{fig:s2flowa}, but their end nodes are connected to the sink. To avoid overcomplication, neither of these two types of flow are shown in Fig.~\ref{fig:s2flowb}.

Lastly, the flows exiting each station row are routed to a common sink with demand set to conserve \acp{SV} in the system. Thus the problem can be written in canonical form $\min_{x} c^\top x$ subject to $\sum_{m \rightarrow n} x_{m} - \sum_{n \rightarrow p} x_{p} = b_n\,\, \forall n$ and $0 \leq x \leq \overline{x}$, where the scalars $b_n$ and the elements of $\overline{x}$ are all integer valued. See Appendix A for a full description.
\end{proof}

\begin{figure}[t]
\centering
\begin{subfigure}[t]{0.38\textwidth}
\centering
\begin{tikzpicture}[scale=1]
	\coordinate (i) at (0.5, 4); \coordinate (j) at (2.5, 0);
	
	\draw[myPath] (i) to [out=260, in=125] (0.4, 1.2) to [out=305, in=160] (j);
	\draw[myPath] (i) to [out=270, in=120] (j);
	\draw[-, dotted, thick] (1.4, 2) -- (1.8, 2.2);
	\draw[myPath] (i) to [out=310, in=80] (j);
	\draw node at (-0.15, 1.1) {$\overline{l}_{i,j}^{t,k,1}$};
	\draw node at (0.9, 1.5) {$\overline{l}_{i,j}^{t,k,2}$};
	\draw node at (2.6, 2.5) {$\overline{l}_{i,j}^{t,k,f_{i,j}^{t,k}}$};
	\filldraw[gray] (i) circle (2pt) node[left] {$(i, t)$};
	\filldraw[gray] (j) circle (2pt) node[below] {$(j, t+k)$};
\end{tikzpicture}
\vspace{0.55cm}  
\caption{Model of flow $w_{i,j}^{t,k}$ as the sum of flows along unit-capacity edges, whose costs $\overline{l}_{i,j}^{t,k,\cdot}$ are negated slopes of the piecewise linear cost function $l_{i,j}^{t,k}(f_{i,j}^{t,k}(\xi) - w_{i,j}^{t,k},\xi)$. Optimal flow solutions are equivalent to objective \eqref{eq:DRRPS2Obj} when the constant $l_{i,j}^{t,k}(f_{i,j}^{t,k}(\xi);\xi)$ is added.}
\label{fig:s2flowa}
\end{subfigure}\hspace{1em}
\begin{subfigure}[t]{0.57\textwidth}
\begin{tikzpicture}
	\coordinate (i1) at (0,3); \coordinate (i2) at (2,3);
	\coordinate (iT1) at (4,3); \coordinate (iT) at (6,3); \coordinate (iTend) at (8,3);
	\coordinate (j1) at (0,0); \coordinate (j2) at (2,0);
	\coordinate (jT1) at (4,0); \coordinate (jT) at (6,0); \coordinate (jTend) at (8,0);
	\draw[myPath] (i1) -- (i2) node [midway, above] {$d_i^1$};
	\draw[dashed] (i2) -- (iT1) node [midway, above] {$\cdots$};
	\draw[myPath] (iT1) -- (iT) node [midway, above] {$d_i^{T-1}$}; 
	\draw[myPath] (iT) -- (iTend) node [midway, above] {$d_i^T$};

	\node[red] (di0) [above of=i1, yshift=0.1cm] {$d_i^0$};
	\draw[myPath2] (di0) -- (i1);
	\draw[myPath2] (i1){}+(-0.5,-0.5) -- (i1);
	\draw[myPath2] (i1) -- (0,2.3);
	\draw[red] (i1){}+(-0.6,-0.7) node {$y_i^{-,1}$};
	\draw[red] (i1){}+(0,-0.9) node {$y_i^{+,1}$};
	\draw[myPath2] (i2){}+(-0.5,-0.5) -- (i2);
	\draw[myPath2] (i2) -- (2,2.3);
	\draw[myPath2] (iT1){}+(-0.5,-0.5) -- (iT1);
	\draw[myPath2] (iT1) -- (4,2.3);
	\draw[myPath2] (iT){}+(-0.5,-0.5) -- (iT);
	\draw[myPath2] (iT) -- (6,2.3);
	
	\draw[myPath] (j1) -- (j2) node [midway, above] {$d_j^1$};
	\draw[dashed] (j2) -- (jT1) node [midway, above] {$\cdots$};
	\draw[myPath] (jT1) -- (jT)node [midway, above] {$d_j^{T-1}$};
	\draw[myPath] (jT) -- (jTend) node [midway, above] {$d_j^T$};

	\node[red] (dj0) [above of=j1, yshift=0.1cm] {$d_j^0$};
	\draw[myPath2] (dj0) -- (j1);
	\draw[myPath2] (j1){}+(-0.5,-0.5) -- (j1);
	\draw[myPath2] (j1) -- (0,-0.7);
	\draw[myPath2] (j2){}+(-0.5,-0.5) -- (j2);
	\draw[myPath2] (j2) -- (2,-0.7);
	\draw[myPath2] (jT1){}+(-0.5,-0.5) -- (jT1);
	\draw[myPath2] (jT1) -- (4,-0.7);
	\draw[myPath2] (jT){}+(-0.5,-0.5) -- (jT);
	\draw[myPath2] (jT) -- (6,-0.7);
	\draw[myPath] (i1) to [out=290, in=100] (j2);
	\draw (j2){}+(-0.6,1.7) node {$w_{i,j}^{1,1}$};
	\draw[red] (jTend){}+(0.1,1.5) node {sink};
	\draw[myPath2] (iTend) to [out=330, in=100] (8.2,1.7);
	\draw[myPath2] (jTend) to [out=30, in=260] (8.2,1.3);
	\filldraw[gray] (i1) circle (2pt) node[above left] {$(i, 1)$};
	\filldraw[gray] (i2) circle (2pt) node[below right] {$(i, 2)$};
	\filldraw[gray] (iT1) circle (2pt) node[below right] {$(i, T\!-\!1)$};
	\filldraw[gray] (iT) circle (2pt) node[below right] {$(i, T)$};
	\filldraw[gray] (j1) circle (2pt) node[below right] {$(j, 1)$};
	\filldraw[gray] (j2) circle (2pt) node[below right] {$(j, 2)$};
	\filldraw[gray] (jT1) circle (2pt) node[below right] {$(j, T\!-\!1)$};
	\filldraw[gray] (jT) circle (2pt) node[below right] {$(j, T)$};
\end{tikzpicture}
\caption{Schematic of second stage as a min-cost flow problem. For clarity only two station rows $i,j$ and one \ac{SV} flow $w_{i,j}^{1,1}$ are shown. Each flow $w_{i,j}^{t,k}$ is modelled as illustrated in panel (a). The sink value is equal to the sum of net supply elsewhere in the system. Nodes and their labels are depicted in grey. Each horizontal edge has cost zero and capacity $\overline{d}_i$ or $\overline{d}_j$.}
\label{fig:s2flowb}
\end{subfigure}
\caption{Model of the second stage \eqref{eq:stage2} as a min-cost flow problem.}
\label{fig:s2flow}
\end{figure}

\begin{rem}
Problem \eqref{eq:stage2} can be solved using a dedicated min-cost flow solver, which can be more efficient than a solver designed for generic \acp{LP}. This solver must be capable of returning the Lagrange multipliers required in line \ref{algl:stage2} of Algorithm \ref{alg:spar}. Another way of obtaining these multipliers would be to solve the corresponding dual problem \cite[eq.~(5.2)]{ahuja_network_1993}.
\end{rem}


\subsubsection{Feasibility and model complexity} \label{sec:feas} \label{sec:AddAss1}

Problem \eqref{eq:stage2} may not be feasible, depending on the boundary conditions $y_i^{+,t},y_i^{-,t}$ inherited from the first stage. As a trivial example, consider a system with zero demand ($f_{i,j}^{t,k}=0$ for all $i,j,t,k$), in which one station $i$ has an initial fill level satisfying $0 \leq d_i^0 \leq \overline{d}_i$ and immediately receives $y_i^{-,1} > \overline{d}_i - d_i^0$ \acp{SV} in the first time step, with $y_i^{+,t} = 0$. With all $w_{i,j}^{t,k}$ constrained to zero by $f_{i,j}^{t,k}$, constraint \eqref{eq:DRRPNC2} cannot be satisfied.

However, reality is more complex than our model, in particular regarding the no-recourse assumption on $y$ and $z$ and Assumption (A1) concerning customer behaviour. In practice, a system operator finding the load/unload schedules $y_i^{+,t}$ and $y_i^{-,t}$ infeasible when trying to implement them would simply make a best effort at the time, possibly ordering additional \ac{RV} trips in the process. On the demand side, a customer would overcome the problem of a full end station $j$ by diverting to another station $j'$ on arrival, and an empty initial station $i$ may lead to the customer using a nearby station $i'$ instead. Thus true infeasibility is never encountered in the real world. Conceptually, it would be possible to address some of these complexities in our model by introducing a large number of additional second-stage decision variables for the extra ``hops'' $(i, i')$, $(i', j)$, and $(j, j')$ made by inconvenienced customers. However, modelling the costs and causal customer behaviour associated with these additional variables would be a substantial endeavour. Additional complexities exist beyond the feasibility issue; for example, a risk-averse customer seeing the current state of the system may decline to attempt any journey at all, even if there is in fact a reasonable chance of completing it successfully.

As a compromise, we use a modelling simplification to ensure solutions respect constraint \eqref{eq:DRRPNC2} whenever possible, and otherwise neglect the above issues. We introduce two extra penalty variables per node $i \in \mathcal{N}_\text{SV}$ and time step $t$, $p_i^{+,t}$ and $p_i^{-,t}$, which allow \acp{SV} to be created or destroyed at a high cost for modelling purposes. The variables appear in the objective function of the second stage with a suitably high linear cost coefficient $r_p$. We therefore solve the following modified version of problem \eqref{eq:stage2}:
\begin{subequations} \label{eq:stage2Pen}
\allowdisplaybreaks
\begin{align}
\min_{w} \quad & \sum_{t=1}^T \left[  \sum_{(i, j)\in \mathcal{E}_\text{SV}} \sum_{k=0}^K l_{i,j}^{t,k} (f_{i,j}^{t,k}(\xi) - w_{i,j}^{t,k}; \xi) + \sum_{i \in \mathcal{N}_\text{SV}} r_p (p_i^{+,t} + p_i^{-,t})\right] \label{eq:DRRPS2ObjPen}\\
    \text{s.~t.}\quad & 0 \leq d_i^0 + \sum_{\tau=1}^t\left[ \sum_{k=0}^K \left( \sum_{(j,i) \in \mathcal{E}_\text{SV}}  w_{j,i}^{\tau-k,k} - \sum_{(i,j) \in \mathcal{E}_\text{SV}}  w_{i,j}^{\tau,k} \right) + y_i^{-,\tau} - y_i^{+,\tau} \right] \nonumber \\
    & \hspace{5cm} + p_i^{+,t} - p_i^{-,t}\leq \overline{d}_i \,, \quad  t=1,\ldots,T, \, i \in \mathcal{N}_\text{SV}, \label{eq:DRRPNC2Pen}\\
        & 0 \leq p_i^{+,t} \, , \quad 0 \leq p_i^{-,t} \, , \quad t=1,\ldots,T, \, i \in \mathcal{N}_\text{SV}, \label{eq:Pgt0}\\
& p_i^{+,t}, p_i^{-,t} \in \mathbb{Z}\, ,  \quad t=1,\ldots,T, \, i \in \mathcal{N}_\text{SV}, \label{eq:Pint} \\
&     \text{\eqref{eq:DRRPwB2} and \eqref{eq:DRRP2Int3a} hold.} \nonumber
\end{align}
\end{subequations}
The addition of variables $p_i^{+,t}$ and $p_i^{-,t}$ means that constraint \eqref{eq:DRRPNC2Pen} can always be satisfied, as the extra variables allow any positive or negative integer to be added to the term between the inequalities, for each node $i \in \mathcal{N}_\text{SV}$ and time step $t$. It is also straightforward to show that Proposition \ref{prop:stage2int} still holds in this setting.\footnote{The proposition is now read ``...even when integrality constraints \eqref{eq:DRRPwB2} and \eqref{eq:Pint} are relaxed.'' It can be shown to hold by modifying the graph in Fig.~\ref{fig:s2flowb} to include an additional source and sink, high-cost edges to and from each node, and zero-cost edges allowing these to be bypassed if the problem is already feasible. For the sake of brevity we omit a full description.} As the extra penalties distort the objective function for the subset of $\xi$ realizations where an infeasibility would otherwise occur, the penalty function in effect causes the operator to solve a slightly modified decision problem.

\subsection{Model calibration} \label{sec:Calibration}

The model relies on a number of parameters, as listed in Table \ref{tab:notation}. For the decision problem to generate decisions that cause a real-world improvement to welfare, these must be calibrated based on the system operator's knowledge of the system. In general, the system operator's own costs and constraints, for example the \ac{RV} movement cost coefficients $c_{i,j}^t$ and load/unload costs $r_i^t$, are easier to estimate than those on the demand side, $f_{i,j}^{t,k}$ and $l_{i,j}^{t,k}(\cdot)$. The former can be estimated from records of previous operating expenses, and assigning ``amortized'' time and fuel cost components to the model coefficients as appropriate.

Estimating customer demand and cost functions for shared mobility system is a topic of numerous dedicated studies. When estimating the mean level of demand expected in future time intervals, a simple approach is to bin records of historical \ac{SV} journeys completed according to the $(i,j,t,k)$ tuple into which they fall, and normalize across all records as appropriate. While this is a straightforward computation, it does not account for those customers who would have taken a journey had there been a \ac{SV} been available, but which could not. To identify the true underlying demand, several authors have created regression models to predict short-term demand in bike sharing systems as a function of current and recent state and exogenous variables; Giot and Cherrier reviewed several of these \cite{giot_predicting_2014}.

Beyond the mean, the \emph{distribution} of customer demand per time interval must also be estimated for the purpose of scenario generation in the model. One way of doing this is simply to assume that the historical average demand observed for a given tuple $(i,j,t,k)$ represents the rate of a Poisson arrival process, which then uniquely parameterizes the probability distribution of demand for that tuple. An alternative method is to bin the historical numbers of demand events observed per time interval for that tuple, and use the resulting histogram as the probability density function for $f_{i,j}^{t,k}$. The latter approach, while able to model non-Poisson arrival processes, requires a sufficiently large number of historical samples per tuple $(i,j,t,k)$ to build plausible estimates of the demand distributions.

The above approaches model demand in an isolated manner between tuples for simplicity; an additional feature of interest for correct representation of scenarios is the \emph{correlation} of demand between tuples. Part of the variation may be linked to latent variables common to the whole system, for example local weather. Authors such as Singhvi \textit{et al.}~\cite{singhvi_predicting_2015} have created neighbourhood regression models of these effects. Often though, underlying variables cannot be identified to explain all the variation observed in historical data without some degree of overfitting.

Lastly, the value of customer travel, encoded in the loss functions $l_{i,j}^{t,k}(\cdot)$, must also be estimated. This function can be viewed as the sum of the journey price paid plus whatever dollar-equivalent surplus the customer enjoys above this. In general, the values of historical customer journeys are known to be bounded from below by the prices they paid, as rational customers would otherwise not have chosen to use the service. For an upper bound, if one assumes that the customer chooses to use the service based on a trade-off between journey time and cost, then one can also upper-bound the journey value by the cost of an on-demand taxi service from $i$ to $j$, assuming that this is at least as fast, and at least as expensive, as using the \ac{SV}. Beyond this, loss functions are typically estimated by econometric arguments or empirical studies assessing the value of customers' time \cite{lisco_value_1968}. If different prices have been offered to customers in the past, additional clues could be gained from customers' price elasticity. An alternative setting for our proposed cost function could be one where the system operator simply maximizes revenues. In this case $l_{i,j}^{t,k}(\cdot)$ will just represent lost \ac{SV} rental fee income from customers who were unable to use the system.

\section{Solution approach} \label{sec:SA}

In the common case that the number of possible demand realizations is very large or infinite, problem \eqref{eq:stage1}-\eqref{eq:stage2} cannot be solved directly, as this would require an analytical representation of $\mathbb{E}[V(y, \xi)]$, which is generally unavailable. The number of possible realizations of $\xi$ could be modelled as exponential in the number of customer nodes where demand arises, $\mathcal{O}(\overline{f}^{|\mathcal{N}_\text{SV}|})$ where $\overline{f}$ is the number of different integer demand levels per station. Alternatively it could be infinite, for example in the case of a pure Poisson arrival process with no theoretical upper bound on demand per time interval.

One potential compromise is to write a problem resembling \eqref{eq:DRRP}, but which simultaneously encodes a limited number of realizations $(\xi_{(1)}, \xi_{(2)}, \ldots)$ sampled from the support set $\Xi$, and includes duplicate decision variables $(w_{(1)}, w_{(2)},\ldots)$ and duplicates of constraints \eqref{eq:DRRPNC2}-\eqref{eq:DRRP2Int3a} for each. The second-stage cost would be represented by the sample average of objective \eqref{eq:DRRPS2Obj}. However, the resulting scenario program has a very large number of decision variables and the number of constraints scales with the number of scenarios, as found in e.g.~the car-sharing problem in \cite{fan_carsharing_2008}. A solution by branch-and-bound is impractical beyond very small networks.

Another possibility is to use a decomposition scheme. A well-known approach for such two-stage programs is the L-shaped, or Benders, decomposition \cite{benders_partitioning_1962}, in which supporting hyperplanes of a value function for the second stage are built up in an algorithm that alternates between the first and second stages. However, the large, or indeed infinite, number of possible second-stage uncertainty realizations means that the generation of valid cuts quickly becomes intractable as the problem size grows. Alternatively, one could use a scheme such as \ac{LR} to break the problem into smaller subproblems linked by pricing functions. Such an approach was tried in \cite{ghosh_dynamic_2017}, and while it was possible to use additional clustering heuristics to solve large-scale problems, computation times remained long (significant fractions of a day), even in a deterministic demand setting.

\subsection{Separable value function approximation}

As an alternative to the decompositions described above, we propose to solve \eqref{eq:stage1}-\eqref{eq:stage2} using an approximate representation of the second-stage value function, \[\overline{V}(y; \theta) \approx \mathbb{E}[V(y,\xi)]\] taking the form
\begin{equation} \label{eq:vbardef}
    \overline{V}(y; \theta) = \theta_0 + \sum_{t=1}^T \sum_{i \in \mathcal{N}_\text{SV} \cap \mathcal{N}_\text{RV}}\overline{V}_i^t(y_i^{-,t} - y_i^{+,t}; \theta_i^t) \, ,
\end{equation}
in which $\theta := (\theta_0, \theta_1^1, \ldots, \theta_{|\mathcal{N}_\text{SV} \cap \mathcal{N}_\text{RV}|}^T)$ is a vector of parameters, with $\theta_0 \in \mathbb{R}$ and $\theta_i^t \in \mathbb{R}^{2\overline{y}}$ for each index $(i, t)$. 

Expressing the elements of each station's subvector as $\theta_i^t = ([\theta_i^t]_{-\overline{y}}, [\theta_i^t]_{-\overline{y}+1}, \ldots, [\theta_i^t]_{\overline{y}-1})$, each function $\overline{V}_i^t : [-\overline{y},\overline{y}] \rightarrow \mathbb{R}$ is of the form
\begin{equation} \label{eq:vbaritdef}
    \overline{V}_i^t(x; \theta_i^t) = \left\{ \begin{array}{ll} 
    -\sum_{y' = -\overline{y}}^{-1} [\theta_i^t]_{y'} \,, & x = -\overline{y}\,, \\
    -\sum_{y' = \lceil x \rceil-1}^{-1} [\theta_i^t]_{y'} + [\theta_i^t]_{\lceil x \rceil-1}(x - \lfloor x \rfloor) \, , \quad & -\overline{y} < x < 0\,, \\
    0 \, , \quad & x = 0\,, \\ 
    \sum_{y' = 0}^{\lfloor x \rfloor} [\theta_i^t]_{y'} + [\theta_i^t]_{\lfloor x \rfloor}(x - \lfloor x \rfloor) \, , \quad & 0 < x < \overline{y}\,, \\
    \sum_{y' = 0}^{\overline{y}-1} [\theta_i^t]_{y'}\, , & x = \overline{y}\,. 
    \end{array} \right.
\end{equation}
Thus, $\overline{V}_i^t$ is a piecewise linear function passing through the origin, and with the $2\overline{y}$ elements of $\theta_i^t$ specifying the slopes between its integer breakpoints. We recall that constant $\overline{y}$ is the largest magnitude of action $y_i^{+,t}$ or $y_i^{-,t}$ under consideration by the operator. The form of $\overline{V}_i^t(\,\cdot\,; \theta_i^t)$ is illustrated in Fig.~\ref{fig:vftheta}.

We motivate a further restriction on the form of $\overline{V}(y; \theta)$ with the following property of the true second-stage \ac{VF}: \\

\begin{prop}[Convex value function] \label{prop:convexvf}
The second-stage \ac{VF}, $\mathbb{E}[V(y,\xi)]$, is a convex function of $y$.
\end{prop}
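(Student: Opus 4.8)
The plan is to reduce the claim to a statement about the optimal value of a parametric \emph{convex} program, and then invoke the elementary fact that expectation preserves convexity. First I would use Proposition~\ref{prop:stage2int} to pass from the integer program \eqref{eq:stage2} to its \ac{LP} relaxation: since the two share the same optimal value whenever $y$ is integer, and since the object we ultimately approximate with $\overline{V}$ is a function of continuous $y$, it suffices to establish convexity of the relaxed value function $V(\cdot,\xi)$, obtained by dropping \eqref{eq:DRRP2Int3a}, over all real $y$. Working with the relaxation, I would record two structural facts: first, the objective \eqref{eq:DRRPS2Obj} is convex in $w$ and does not depend on $y$, being a finite sum of the convex piecewise-linear losses $l_{i,j}^{t,k}(\,\cdot\,;\xi)$ each composed with the affine map $w_{i,j}^{t,k}\mapsto f_{i,j}^{t,k}(\xi)-w_{i,j}^{t,k}$; second, every constraint, namely \eqref{eq:DRRPNC2} and \eqref{eq:DRRPwB2}, is affine in the \emph{joint} variable $(w,y)$, so the feasible set cut out by these constraints is a convex polyhedron in $(w,y)$.

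The convexity of $V(\cdot,\xi)$ then follows from the standard convex-combination argument. Given any $y_1,y_2$ with minimizers $w_1,w_2$ and any $\lambda\in[0,1]$, the point $w_\lambda:=\lambda w_1+(1-\lambda)w_2$ is feasible for the parameter value $y_\lambda:=\lambda y_1+(1-\lambda)y_2$, precisely because the constraints are jointly affine in $(w,y)$; evaluating the convex objective at $w_\lambda$ and using its convexity gives $V(y_\lambda,\xi)\le \lambda V(y_1,\xi)+(1-\lambda)V(y_2,\xi)$. Since this holds for every fixed realization $\xi$, taking the expectation over $\xi$ — a nonnegative-weighted integral of convex functions — preserves convexity and yields convexity of $\mathbb{E}[V(y,\xi)]$.

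The hard part will be the bookkeeping around feasibility and integrality rather than the convex-combination step itself. I would need to note that an infeasible $y$ contributes the value $+\infty$, so that convexity is understood over the extended reals with a convex effective domain; the cleanest route is to argue on the penalized problem \eqref{eq:stage2Pen}, for which $V(\cdot,\xi)$ is everywhere finite and the identical affine-constraint reasoning applies verbatim. A second point requiring care is the justification for replacing the integer problem by its relaxation: Proposition~\ref{prop:stage2int} guarantees agreement only at integer right-hand sides, so I would emphasise that convexity is a property of the relaxed (continuous) value function, and that the integer optima merely lie on its graph — which is exactly the fact needed to motivate the convex restriction on $\overline{V}$ introduced next.
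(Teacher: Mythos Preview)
Your proposal is correct and follows essentially the same route as the paper: drop integrality via Proposition~\ref{prop:stage2int}, establish convexity of the relaxed parametric program in $y$ for each fixed $\xi$, and conclude by linearity of expectation. The only differences are cosmetic --- the paper first applies an epigraph reformulation to obtain a pure LP and then cites a parametric-programming result of Fiacco for the convexity step, whereas you carry out the convex-combination argument directly on the piecewise-linear objective; and the paper writes the expectation as a finite sum over a discrete $\Xi$ rather than the general nonnegative integral you invoke.
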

\begin{proof}
By introducing epigraph variables to convert the piecewise linear functions $l_{i,j}^{t,k}(\, \cdot \, , \xi)$ into linear constraints, problem \eqref{eq:stage2} can be written in the form \[ V(y,\xi) = \min_x c(\xi)^\top x + d \quad \text{s.~t.}\,\, Ax \leq b(\xi) + Dy \, .\] Thanks to Proposition \ref{prop:stage2int} the integrality constraints are not required. For fixed $\xi$ this satisfies the convexity conditions of \cite[Prop.~2.1]{fiacco_convexity_1986}, where the parameter domain $S$ used in that result corresponds to the projection of the feasible set of \eqref{eq:stage1} onto $y$-space in our setting. Thus $V(y,\xi)$ is convex in $y$. The expectation over $\xi$ is simply the linear combination $\mathbb{E}[V(y,\xi)] = \sum_{\xi' \in \Xi}\left[ \mathbb{P}(\xi = \xi')\cdot  V(y,\xi') \right]$, and thus also convex.
\end{proof}

Thus we impose the condition that the slopes $[\theta_i^t]_{-\overline{y}}, \ldots, [\theta_i^t]_{\overline{y}-1}$ be non-decreasing, i.e., forming a convex function in each coordinate variable. Moreover we bound the slope magnitudes by a suitably large value $\theta^{\max}$. We write the set of feasible parameters compactly as
\begin{equation}
    \Theta := \left\{ \theta \, \left| \, \begin{array}{lll} -\theta^{\max} \leq {[\theta_i^t]}_{y'} \leq \theta^{\max} & y' = -\overline{y}, \ldots,\overline{y}-1, & \forall (i,t), \\ {\quad\quad[\theta_i^t]}_{y'} \geq {[\theta_i^t]}_{y'-1} & y' = -\overline{y}+1, \ldots, \overline{y}-1, & \forall (i,t) \end{array} \right. \right\} \, .
\end{equation} 

\begin{figure}[t]
\centering
\begin{tikzpicture}[scale=1]

	\draw[myPath] (-4.5, 0) to (4.5, 0);
	\draw[myPath] (0, -0.5) to (0, 4);
	
	\draw[-] (4, 3.7) to (3, 2.4) to (2, 1.2) to (1, 0.4) to (0, 0) to (-1, -0.2) to (-2, -0.3) to (-3, 0.1) to (-4, 1.2);
	\draw[-] (4, 3.6) to (4, 3.8); \draw (3, 2.3) to (3, 2.5); \draw (2, 1.1) to (2, 1.3);
	\draw[-] (1, 0.3) to (1, 0.5); \draw (-1, -0.3) to (-1, -0.1); \draw (-2, -0.4) to (-2, -0.2);
	\draw[-] (-3, 0.0) to (-3, 0.2); \draw[-] (-4, 1.1) to (-4, 1.3);
	\draw[-, dashed] (2.25, 1.5) to (2.75, 1.5) to (2.75, 2.1);
	\draw node[right] at (2.75, 1.8) {$[\theta_i^t]_{\lfloor x \rfloor}$};
	
	\draw[-] (-4, -0.1) to (-4, 0.1); \draw[-] (-3, -0.1) to (-3, 0.1); \draw[-] (-2, -0.1) to (-2, 0.1);
	\draw[-] (-1, -0.1) to (-1, 0.1); \draw[-] (1, -0.1) to (1, 0.1); \draw[-] (2, -0.1) to (2, 0.1);
	\draw[-] (3, -0.1) to (3, 0.1); \draw[-] (4, -0.1) to (4, 0.1);
	\draw[-] (-0.1, 1) to (0.1, 1); \draw[-] (-0.1, 2) to (0.1, 2); \draw[-] (-0.1, 3) to (0.1, 3);

	\draw node[right] at (4.5, 0) {$x$};
	\draw node[left] at (0, 4) {$\overline{V}_i^t(x; \theta_i^t)$};
	
	\draw node[below] at (-4, 0) {$-\overline{y}$};
	\draw node[below] at (4, 0) {$\overline{y}$}; 
	\draw node[below right] at (0, 0) {$0$};
	
\end{tikzpicture}
\vspace{0.25cm}
\caption{Illustration of the convex value function component $\overline{V}_i^t(\,\cdot\,; \theta_i^t)$ defined in equation \eqref{eq:vbaritdef}. The elements of the vector $\theta_i^t$ specify the $2\overline{y}$ slopes between integer breakpoints, and the function always passes through the origin.}
\label{fig:vftheta}
\end{figure}
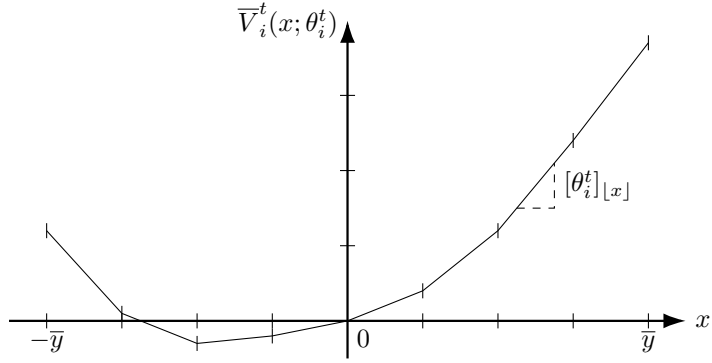

Problem \eqref{eq:stage1} can now be solved approximately by substituting $\overline{V}$ into the objective in place of $\mathbb{E}_\xi[V(y,\xi)]$:
\begin{equation} \label{eq:stage1_approx}
\begin{aligned}
    \min_{z,y} \quad & \sum_{t=1}^T \! \left[ \sum_{(i,j) \in \mathcal{E}_\text{RV}} \!\!\! c_{i,j}^t z_{i,j}^t + \sum_{i \in \mathcal{N}_\text{SV} \cap \mathcal{N}_\text{RV}}  \!\!\! r_i^t (y_i^{+,t} + y_i^{-,t}) \right] + \overline{V}(y; \theta) \\
    \text{s.~t.}\quad & \text{\eqref{eq:DRRPSVC2}-\eqref{eq:DRRP2Inta}}
\end{aligned}
\end{equation}
 We note immediately that since $\theta_0$ appears as a constant offset in the objective of \eqref{eq:stage1_approx}, it plays no role in the resulting first-stage decisions. We therefore estimate only the elements $\theta_i^t$, and note that an estimate of $\theta_0$ would be straightforward to obtain with a minor adaptation of the algorithm similar to \cite[\S 3]{powell_learning_2004}.

\subsection{Stochastic approximation algorithm}

We now describe an iterative procedure based on the so-called SPAR (\textit{separable, projective, approximation routine}) \cite{powell_learning_2004} to estimate the parameterization $\theta^\star \in \Theta$, approximating the true expected value of \eqref{eq:stage2}, that yields the most efficient solution to the two-stage problem \eqref{eq:stage1}-\eqref{eq:stage2}. The procedure is listed in Algorithm \ref{alg:spar}.

\begin{algorithm}[t]
	\caption{SPAR for dynamic rebalancing of shared mobility systems}\label{alg:spar}
	\textbf{Input:} $T$, $\Xi$, $(\mathcal{V}_v, \mathcal{E}_\text{RV}),(\mathcal{V}_c, \mathcal{E}_\text{SV})$, $n_{\max}$; mappings $f_{i,j}^{t,k}(\xi)$, $l_{i,j}^{t,k}(\,\cdot\,,\xi)$, $\alpha(n)$\\
	\textbf{Output:} First-stage actions $(z^{\rm final}, y^{\rm final})$, final parameter vector $\theta^{(n_{\max})}$ \\
	\textbf{Indices:} Iteration $n$, time step $t$, edges $(i,j) \in \mathcal{E}_\text{SV}$, durations $k$
	\begin{algorithmic}[1]
		\State $\theta^{(1)} \leftarrow \mathbf{0}$
        \For{$n=1,\ldots, n_{\max}$}
        \State Update cost function of \eqref{eq:stage1_approx} with approx.~cost-to-go $\overline{V}(y;\theta^{(n)})$
		\State $(z^{(n)}, y^{(n)}) \leftarrow $ Solve \eqref{eq:stage1_approx}, optionally with some or all integrality constraints relaxed \label{algl:stage1}
		\State Draw a new independent sample $\xi^{(n)} \in \Xi$ \label{algl:samplexi}
		\For{each tuple $(i,j,t,k)$}
		    \State Update demand $f_{i,j}^{t,k}(\xi^{(n)})$
		    \State Update journey valuation function $l_{i,j}^{t,k}(\, \cdot \, ; \xi^{(n)})$
		\EndFor
		\State Update RHS of problem \eqref{eq:stage2} with data $y^{(n)}$ \label{algl:yupdate}
		\State $\lambda^{(n)} \leftarrow$ Solve convex relaxation of \eqref{eq:stage2} \Comment{Multipliers for constraints \eqref{eq:DRRPNC2}} \label{algl:stage2}
		\State Construct gradient vector $\zeta(\lambda^{(n)})$ \label{algl:grad}
		\State $\tilde{\theta} \leftarrow \theta^{(n)} - \alpha(n) \zeta(\lambda^{(n)})$ \Comment{Gradient step}
		\State $\theta^{(n+1)} \leftarrow \arg \min_{\theta \in \Theta}\tfrac{1}{2}||\theta - \tilde{\theta}||^2$ \Comment{Project onto admissible param.~set}
		\EndFor
	\If{relaxed problem solved on line \ref{algl:stage1}}
	\State $(z^{\rm final}, y^{\rm final}) \leftarrow $ Re-solve \eqref{eq:stage1_approx} with all integrality constraints enforced \label{algl:final_integer}
	\Else
	\State $(z^{\rm final}, y^{\rm final}) \leftarrow (z^{(n_{\max})}, y^{(n_{\max})}) $
	\EndIf
	\end{algorithmic}
	\Return $(z^{\rm final}, y^{\rm final}, \theta^{(n_{\max})})$ \Comment{Final integer-feasible operator decision and approximate \ac{VF}}
\end{algorithm}

The superscript $(n)$ indicates a variable's value at iteration $n$. At each iteration, the algorithm uses the result of a sample instance of the second-stage problem \eqref{eq:stage2} to modify the gradient of $\overline{V}(y)$ at the last value of $y$ chosen in the first stage. The modification is determined by the sensitivity of the optimal value of \eqref{eq:stage2} to the first-stage decisions, which is readily obtained via the optimal dual variables for constraints \eqref{eq:DRRPNC2}.

The gradient vector $\zeta$ in line \ref{algl:grad} has the same dimension as $\theta$. Using $[\zeta_i^t]_y$ to denote the element of $\zeta$ corresponding to $[\theta_i^t]_y$, it is defined by 
\begin{equation}
    [\zeta_i^t]_{y'} = \left\{ \begin{array}{ll} \lambda_i^{+,t} - \lambda_i^{-,t} & \text{if $y' = y_i^{(n)\,-,t} - y_i^{(n)\,+,t}$,} \\ 0 & \text{otherwise,} \end{array} \right.
\end{equation}
where $\lambda_i^{+,t}$ and $\lambda_i^{+,t}$ are optimal dual variables for the upper and lower bounds of constraint \eqref{eq:DRRPNC2} respectively, and $y_i^{(n)\,+,t}$ and $y_i^{(n)\,-,t}$ are outputs from stage 1. The step size $\alpha(n)$ diminishes over iterations, and the asymptotic  convergence results derived in \cite{powell_learning_2004} rely on this rule satisfying $\alpha(n) \in (0,1] \, \, \forall n$, $\sum_{n=1}^\infty \alpha(n) = \infty$, and  $\sum_{n=1}^\infty (\alpha(n))^2 < \infty$.\footnote{The original formulation also allows for random step sizes, with slightly different requirements on these for convergence.}

\subsection{Use of continuous relaxations}

Thanks to the integer breakpoints in our parameterized approximator $\overline{V}(y;\theta)$, we can guarantee that integer-valued \ac{RV} routing decisions $z$ result in an integer-valued solution to the whole problem, even when the integrality constraints on $y$ and $b$ are not enforced. This may help to explain the relatively low computation times reported in Section \ref{sec:NR}. We prove this in the following Lemma and Proposition.\\

\begin{lem} \label{lem:minimal_y}
There always exists an optimizer of \eqref{eq:stage1_approx} with $y$ variables taking the form $(y_i^{+,t},0)$ or $(0,y_i^{-,t})$ for all tuples $(i,t)$.
\end{lem}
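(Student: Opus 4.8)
The plan is to reuse the construction from the proof of Lemma \ref{lem:minimal_y_orig} and simply check that the extra objective term $\overline{V}(y;\theta)$ appearing in \eqref{eq:stage1_approx} does not obstruct it. Starting from any optimal solution $(z,y,b)$ of \eqref{eq:stage1_approx}, I would, for each tuple $(i,t)$, set $m_i^t := \min\{y_i^{+,t}, y_i^{-,t}\}$ and subtract $m_i^t$ from both $y_i^{+,t}$ and $y_i^{-,t}$. By construction this zeroes out whichever of the two is smaller, producing modified loading variables of the stated form $(y_i^{+,t},0)$ or $(0,y_i^{-,t})$, while leaving $z$ and $b$ untouched.

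Next I would verify that feasibility is preserved. The only constraint in which the two loading variables at $(i,t)$ appear jointly is the loaded-\ac{SV} conservation constraint \eqref{eq:DRRPSVC2}, which depends on $y$ only through the difference $y_i^{+,t}-y_i^{-,t}$; subtracting the same $m_i^t$ from both leaves this difference unchanged, so \eqref{eq:DRRPSVC2} continues to hold with the same $b$. The box bounds \eqref{eq:DRRPyB2} are preserved because each modified value lies in $[0,\overline{y}]$ (using $0 \le m_i^t \le y_i^{\pm,t}$), and where the integrality constraint \eqref{eq:DRRP2Int2a} is enforced, $m_i^t$ is the minimum of two integers and hence itself integer, so integrality survives (the argument is unaffected if that constraint is relaxed). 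All remaining constraints involve only $z$ and $b$.

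The crux, and the only place this proof genuinely differs from that of Lemma \ref{lem:minimal_y_orig}, is the effect on the objective. The $z$-cost is unchanged, and the explicit loading cost $r_i^t(y_i^{+,t}+y_i^{-,t})$ cannot increase since each $r_i^t \ge 0$ and we subtract $2m_i^t \ge 0$. The step I expect to matter most is the value-function term: by definition \eqref{eq:vbardef}, $\overline{V}(y;\theta)$ enters only through the per-tuple differences $y_i^{-,t}-y_i^{+,t}$ via the components $\overline{V}_i^t$, and each such difference is \emph{invariant} under the modification. Hence every $\overline{V}_i^t$ term, and therefore $\overline{V}(y;\theta)$ in full, is exactly preserved, so the total objective does not increase. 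It follows that any optimal solution can be transformed into one of the required form, which establishes the lemma.
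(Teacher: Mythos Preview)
Your proof is correct and follows essentially the same approach as the paper, which simply states that the result follows from the same rationale as Lemma~\ref{lem:minimal_y_orig}. You have spelled out the one extra check the paper leaves implicit, namely that $\overline{V}(y;\theta)$ depends on $y$ only through the differences $y_i^{-,t}-y_i^{+,t}$ and is therefore unaffected by the modification.
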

\begin{proof} The result follows from the same rationale as Lemma \ref{lem:minimal_y_orig}.
\end{proof}

\begin{prop} \label{prop:ints1}
If \eqref{eq:stage1_approx} remains feasible when $z$ is fixed to some integer-valued $z^\star$, then this fixed problem has an integer-valued optimal $b^\star$ and $y^\star$, even when integrality constraint \eqref{eq:DRRP2Int2a} is relaxed.
\end{prop}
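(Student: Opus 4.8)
The plan is to mirror the argument of Proposition \ref{prop:stage2int} and recast the $z$-fixed instance of \eqref{eq:stage1_approx} as a min-cost flow problem on the time-expanded RV graph with integer data; the integrality of min-cost flow solutions \cite[\S 5.5]{powell_stochastic_2003} then delivers the claim. First I would fix $z = z^\star$, so that constraints \eqref{eq:DRRPRVC2} and \eqref{eq:DRRPzB2} are automatically satisfied and the RV-movement term becomes a constant. The residual problem is an optimization over $(b,y)$ only, with objective $\sum_{t,i} r_i^t(y_i^{+,t} + y_i^{-,t}) + \overline{V}(y;\theta)$ subject to the \ac{SV}-flow conservation \eqref{eq:DRRPSVC2}, the capacity bounds \eqref{eq:DRRPbB2} --- now reading $0 \le b_{i,j}^t \le \overline{b}\, z_{i,j}^{\star t}$ with integer right-hand side because $z^\star$ and $\overline{b}$ are integers --- and the box constraints \eqref{eq:DRRPyB2}, all with integrality relaxed.

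Next I would use Lemma \ref{lem:minimal_y} to restrict attention to optimizers in which, at each $(i,t)$, at most one of $y_i^{+,t}, y_i^{-,t}$ is nonzero. This collapses the load/unload pair into a single net-load variable $x_i^t := y_i^{-,t} - y_i^{+,t} \in [-\overline{y}, \overline{y}]$, and the associated cost becomes $r_i^t|x_i^t| + \overline{V}_i^t(x_i^t; \theta_i^t)$. Because $\theta \in \Theta$ forces the slopes of $\overline{V}_i^t$ to be non-decreasing, this is a \emph{convex} piecewise-linear function of $x_i^t$ with breakpoints only at integers.

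The heart of the argument is the flow construction. I would build one node per pair $(i,t)$, $i \in \mathcal{N}_\text{RV}$, carry $b_{i,j}^t$ on the RV edges with integer capacity $\overline{b}\, z_{i,j}^{\star t}$ and zero cost, treat the given integer $b_{i,j}^0$ as fixed boundary flows, and introduce a single super-node $s$ to absorb the net injections $y_i^{+,t} - y_i^{-,t}$ demanded by \eqref{eq:DRRPSVC2}. As in Fig.~\ref{fig:s2flowa}, the convex piecewise-linear net-load cost at each $(i,t)$ is represented by $\overline{y}$ parallel unit-capacity edges between $(i,t)$ and $s$ --- directed $s \to (i,t)$ for loading and $(i,t) \to s$ for unloading --- whose costs equal the successive (non-decreasing) slopes of $r_i^t|\cdot| + \overline{V}_i^t(\cdot)$; convexity guarantees that a min-cost flow fills the cheapest segments first and therefore reproduces the exact piecewise-linear cost. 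Every capacity and every node balance in this network is integer-valued, so the residual problem takes the canonical form $\min_x c^\top x$ subject to a node--arc incidence system with integer right-hand side and $0 \le x \le \overline{x}$ with integer $\overline{x}$.

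The conclusion then follows from the standard integrality theorem for min-cost flows: since the constraint matrix is totally unimodular and the data are integer, there is an integer-valued optimal flow, which recovers integer $b^\star$ and integer net loads, hence integer $y^\star$ via $y_i^{+,t} = \max(0, -x_i^t)$ and $y_i^{-,t} = \max(0, x_i^t)$. As the feasible region is bounded (capacities and $\overline{y}$ are finite) and, by hypothesis, nonempty, an optimum is attained and the integer flow attains it. I expect the only delicate step to be verifying that the super-node construction keeps all node balances integer once the fixed boundary flows $b_{i,j}^0$ and the splitting of the $y$-edges are incorporated --- that is, confirming the network genuinely has min-cost-flow form with integer data rather than merely resembling one; the remainder is a direct appeal to total unimodularity, exactly as in Proposition \ref{prop:stage2int}.
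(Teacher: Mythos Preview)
Your proposal is correct and follows essentially the same approach as the paper: fix $z=z^\star$, invoke Lemma~\ref{lem:minimal_y}, and recast the residual $(b,y)$ problem as a min-cost network flow on the time-expanded \ac{RV} graph with integer capacities, so that the standard integrality theorem yields integer optima. The only cosmetic difference is in the gadget used to encode the piecewise-linear load/unload cost---the paper routes $y_i^{+,t}$ and $y_i^{-,t}$ through separate fans of unit-capacity arcs terminating in individual $\pm 1$ source/sink nodes (with zero-cost bypass arcs to the global sink), whereas you combine $r_i^t|\cdot|+\overline{V}_i^t(\cdot)$ and use bidirectional parallel arcs to a single super-node; both constructions are valid, and your flagged concern about integer node balances (including the terminal flows $b_{\cdot,\cdot}^T$, which must also be routed to the sink) is exactly the bookkeeping the paper handles via Fig.~\ref{fig:zintsol}.
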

\begin{proof}
With $z = z^\star$ fixed, the optimization over the remaining variables can be written as a min-cost network flow problem on a time-expanded graph, in which the flows $b_{i,j}^t$ traverse time steps and each node $(i,t)$ has $\overline{y}$ incoming and $\overline{y}$ outgoing flows whose sums represent $y_i^{-,t}$ and $y_i^{+,t}$ respectively. Thanks to the integer breakpoints of each function $\overline{V}_i^t(\cdot;\theta)$, and Lemma \ref{lem:minimal_y}, the objective function of \eqref{eq:stage1_approx} can be modelled exactly using an artificial construction of unit-capacity edges, sources and sinks, as shown in Fig.~\ref{fig:zintsol}. Thus, $(b^\star,y^\star)$ can be mapped to the solution of an equivalent min-cost flow problem with integer edge capacities, which (as in Proposition \ref{prop:stage2int}) has an integer-valued solution.
\end{proof}
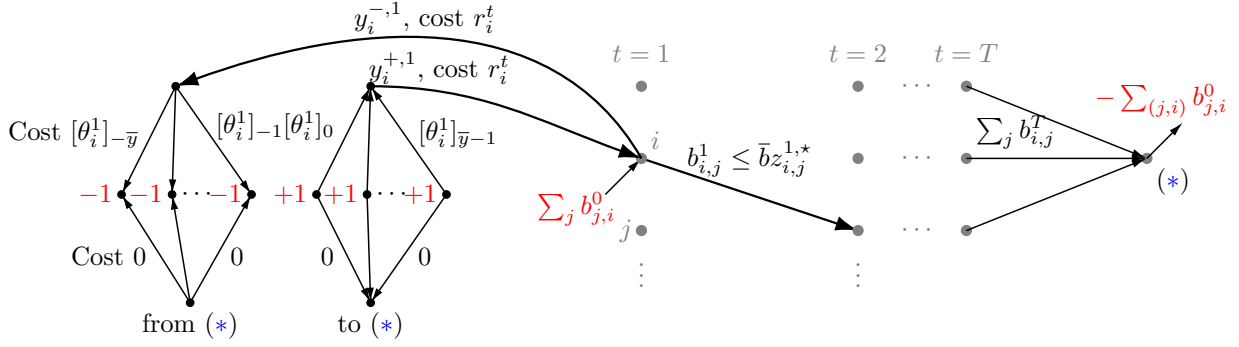
\begin{figure}[t]
\centering
\begin{tikzpicture}[scale=0.96]
	\coordinate (c11) at (0,3); \coordinate (c21) at (3,3); \coordinate (cT1) at (4.5,3);
	\coordinate (c12) at (0,2); \coordinate (c22) at (3,2); \coordinate (cT2) at (4.5,2);
	\coordinate (c13) at (0,1); \coordinate (c23) at (3,1); \coordinate (cT3) at (4.5,1);
	
	\coordinate (sink) at (7,2);

	\draw[myPath] (c12) -- (c23) node [midway, above, yshift=0.1cm] {$b_{i,j}^1 \leq \overline{b}z_{i,j}^{1,\star}$};
	\draw[myPath2] (c12){}+(-0.5,-0.5) -- (c12);
	\draw[red] (c12){}+(-0.9,-0.7) node {$\sum_j b_{j,i}^0$};

	\filldraw[gray] (c11) circle (2pt) node[above, yshift=0.2cm] {$t=1$};
	\filldraw[gray] (c21) circle (2pt) node[above, yshift=0.2cm] {$t=2$} node[right, xshift=0.45cm] {$\cdots$};
	\filldraw[gray] (c12) circle (2pt) node[above right] {$i$};
	\filldraw[gray] (c22) circle (2pt) node[right, xshift=0.45cm] {$\cdots$};
	\filldraw[gray] (c13) circle (2pt) node[below, yshift=-0.1cm] {$\vdots$} node[left] {$j$};
	\filldraw[gray] (c23) circle (2pt) node[below, yshift=-0.1cm] {$\vdots$}  node[right, xshift=0.45cm] {$\cdots$};
	\filldraw[gray] (cT1) circle (2pt) node[above, yshift=0.2cm] {$t=T$};
	\filldraw[gray] (cT2) circle (2pt) node [above right] {\color{black} $\sum_j b_{i,j}^T$};
	\filldraw[gray] (cT3) circle (2pt);
	\filldraw[gray] (sink) circle (2pt) node[below right] {\color{black}({\color{blue} $*$})};
	
	\draw[myPath2] (sink) to (7.5, 2.5);
	\draw[red] (sink){}+(0.2, 0.8) node {$-\sum_{(j,i)} b_{j,i}^0$};
	\draw[myPath2] (cT2) -- (sink);
	\draw[myPath2] (cT1) -- (sink);
	\draw[myPath2] (cT3) -- (sink);
	
	\coordinate (ytop1) at (-6.45, 3); \coordinate (ybottom1) at (-6.25, 0);
	\coordinate (y1) at (-7.2,1.5); 
	\coordinate (y2) at (-6.5,1.5);
	\coordinate (y3) at (-5.4,1.5); 
	\coordinate (ytop2) at (-3.75, 3); \coordinate (ybottom2) at (-3.75, 0);
	\coordinate (y4) at (-4.5,1.5);
	\coordinate (y5) at (-3.8,1.5); 
	\coordinate (y6) at (-2.7,1.5);
	
	\filldraw[black] (ytop1) circle (1.5pt);
	\filldraw[black] (ybottom1) circle (1.5pt) node[below] {from ({\color{blue} $*$})};
	\filldraw[black] (ytop2) circle (1.5pt);
	\filldraw[black] (ybottom2) circle (1.5pt) node[below] {to ({\color{blue} $*$})};
	\filldraw[black] (y1) circle (1.5pt) node[left] {\color{red}$-1$};
	\filldraw[black] (y2) circle (1.5pt) node[right, xshift=0.0cm] {$\cdots$} node[left] {\color{red}$-1$};
	\filldraw[black] (y3) circle (1.5pt) node[left] {\color{red}$-1$};
	\filldraw[black] (y4) circle (1.5pt) node[left] {\color{red}$+1$};
	\filldraw[black] (y5) circle (1.5pt) node[right, xshift=0.0cm] {$\cdots$} node[left] {\color{red}$+1$};
	\filldraw[black] (y6) circle (1.5pt) node[left] {\color{red}$+1$};
	
	\draw[myPath2] (ytop1) -- (y1) node [midway, left, yshift=0.1cm] {Cost $[\theta_i^1]_{-\overline{y}}$}; 
	\draw[myPath2] (ytop1) -- (y2); 
	\draw[myPath2] (ytop1) -- (y3)  node [midway, right, yshift=0.2cm, xshift=-0.1cm] {$[\theta_i^1]_{-1}$};
	\draw[myPath2] (ybottom1) -- (y1) node [midway, left, yshift=-0.1cm] {Cost $0$}; 
	\draw[myPath2] (ybottom1) -- (y2); 
	\draw[myPath2] (ybottom1) -- (y3) node [midway, right, yshift=-0.1cm] {$0$};
	
	\draw[myPath2] (y4) -- (ybottom2) node [midway, left, yshift=-0.1cm] {$0$}; 
	\draw[myPath2] (y5) -- (ybottom2); 
	\draw[myPath2] (y6) -- (ybottom2) node [midway, right, yshift=-0.1cm] {$0$};
	\draw[myPath2] (y4) -- (ytop2) node [midway, left, yshift=0.2cm, xshift=0.3] {$[\theta_i^1]_{0}$}; 
	\draw[myPath2] (y5) -- (ytop2); 
	\draw[myPath2] (y6) -- (ytop2) node [midway, right, yshift=0.1cm] {$[\theta_i^1]_{\overline{y}-1}$};
	
	\draw[myPath] (ytop2) to [out=0, in=160] (c12);
	\draw (-2.8, 3.6) node [below] {$y_i^{+,1}$, cost $r_i^t$};
	\draw[myPath] (c12) to [out=120, in=20] (ytop1);
	\draw (-3, 4.3) node [below] {$y_i^{-,1}$, cost $r_i^t$};

\end{tikzpicture}
\caption{Construction of integer-capacity min-cost network flow for \acp{SV} with fixed \ac{RV} routes $z^\star$ in Proposition \ref{prop:ints1}. Red values indicate source rates, with negative values indicating sinks. Black edge labels indicate the variable modelled by the flow on that edge, except where labelled as a cost. The construction on the left of the figure is repeated for each $(y_i^{+,t}, y_i^{-,t})$ pair, and is connected to the sink node ({\color{blue} $*$}) at the far right as shown. Each node in the $t=1$ column has a source flow $\sum_{j} b_{j,i}^0$ shown, and these sources are balanced by an equivalent sink at node ({\color{blue} $*$}). The in- and out-flows at each grey node reflect the conservation constraint \eqref{eq:DRRPSVC2}.}
\label{fig:zintsol}
\end{figure}





Despite this result, the computational bottleneck of needing to solve an integer program over routing decisions $z$ persists in line \ref{algl:stage1} of Algorithm \ref{alg:spar}. By dropping the integrality constraints \eqref{eq:DRRP2Int2a} and \eqref{eq:DRRP2Inta} one obtains a relaxation, allowing non-integral \ac{RV} flows, that can be solved more quickly. However, this overstates the flexibility of real-world \acp{RV}, and in general does not lead to an implementable (i.e.~integer) solution $(z^{(n_{\max})}, y^{(n_{\max})})$. Therefore one needs to plug the approximate value function achieved after $n_{\max}$ iterations into a final integer program \eqref{eq:stage1_approx} respecting the original constraints (line \ref{algl:final_integer} of Algorithm \ref{alg:spar}). Although there is no guarantee of the quality of this final solution, one can at least expect the end result to be obtained faster than solving integer programs throughout the algorithm. Another compromise is to enforce integrality on only a subset of the $z$ variables, for example in the first half of the planning horizon. These methods are compared in terms of solution quality and computation time in Section \ref{sec:NR}.

\subsection{Convergence properties} \label{sec:conv_prop}

The two-stage approach we use is motivated by the results of Powell \textit{et al.}~\cite{powell_learning_2004}, who in that study proved and discussed convergence of SPAR in increasingly complex settings. First, a single value function of the form \eqref{eq:vbaritdef} was considered, albeit passing through the origin at the edge of the domain rather than at the midpoint. It was shown \cite[Thm.~1]{powell_learning_2004} that if, at each iteration of an approximation algorithm, an unbiased estimate of a segment's gradient $[\theta_i^t]_{y'}$ can be obtained, then as long as the probability of visiting each segment has a strictly positive lower bound in the limit, the estimate converges to the true value function almost surely.

Second, the case of choosing the visited segments of a multi-dimensional but separable function as a result of an outer problem was considered. This violates the strictly-positive probability condition described above, because the optimizer will typically visit only a few segments infinitely often. But under a technical stability condition it can be shown \cite[Thm.~3]{powell_learning_2004} that an accumulation point of the algorithm solves the outer problem in which the value function appears.

Lastly, the authors turn to the case corresponding most closely to our formulation, where a non-separable value function is represented by a sum of separate functions of its coordinate variables. Although the above convergence results no longer apply, they report on a non-separable two-stage stochastic program for which SPAR still produces high-quality solutions. 

Although the SPAR algorithm is agnostic to whether or not the true \ac{VF} to be approximated is convex, each iteration includes a projection in parameter space onto the set $\Theta$ of convex function approximators. One might therefore expect a highly non-convex true \ac{VF} to cause convergence issues because of this. In our case though, as shown in Proposition 2, the true \ac{VF} is indeed convex, and our experience of benign computational behaviour agrees with that reported in \cite{powell_learning_2004}.


\section{Numerical simulations} \label{sec:NR}

We now evaluate the performance of the two-stage approach. In Section \ref{sec:unc_model} we describe the treatment of uncertainty in the model, then test our approximation scheme on two sets of test networks. Networks in the first set, described in Section \ref{sec:test_networks}, are artificially constructed on square grids. Customer demand for \acp{SV} is clustered such that the system enjoys only a moderate service rate unless rebalancing interventions are made by \acp{RV}. We report results on these networks in Section \ref{sec:num_results}. Then in Section \ref{sec:nyc_network} we move on to a case study constructed from public data for Philadelphia's public scheme.

\subsection{Uncertainty model} \label{sec:unc_model}

In all our numerical experiments we model the random arrival of customers wishing to start journeys with an \ac{SV} as independent events for start-end pairs $(i,j) \in \mathcal{E}_\text{SV}$, journey durations $k$, and time steps $t$. The values of these potential journeys are also considered independent. Thus, the functions $f_{i,j}^{t,k}(\xi)$ and $l_{i,j}^{t,k}(\,\cdot\,,\xi)$ could be viewed as independent mappings $f_{i,j}^{t,k}(\xi_{i,j}^{t,k})$ and $l_{i,j}^{t,k}(\,\cdot\,,\xi_{i,j}^{t,k})$ from uncorrelated sub-vectors of $\xi$.

In these experiments we make the standard assumption that events where customers arrive and attempt to start a journey are exponentially distributed in continuous time with known rate parameter. Thus each $f_{i,j}^{t,k}(\xi_{i,j}^{t,k}) \in \mathbb{N}$ follows a separately-parameterized Poisson distribution for each tuple $(i,j,t,k)$ in our discrete time setting. We assume each sampled sub-vector $\xi_{i,j}^{t,k}$ both

\begin{enumerate}
    \item[1)] determines the demand level $f_{i,j}^{t,k}(\xi_{i,j}^{t,k})$, and
    \item[2)] parameterizes the associated loss function $l_{i,j}^{t,k}(\,\cdot\,, \xi_{i,j}^{t,k})$.
\end{enumerate} 

The latter uses $f_{i,j}^{t,k}(\xi_{i,j}^{t,k})$ samples from a uniform distribution $U(l_{\min}, l_{\max})$, with $0 \leq l_{\min} \leq l_{\max}$, sorting the values in ascending order to obtain the convex, piecewise affine loss function described in Section \ref{sec:defs}.\footnote{Although conceptually in Algorithm \ref{alg:spar} the dimension of $\xi$ should be fixed \textit{a priori}, each loss function $l_{i,j}^{t,k}(\,\cdot\,, \xi_{i,j}^{t,k})$ has a random domain $\{0, \ldots, f_{i,j}^{t,k}(\xi_{i,j}^{t,k})\}$, and is therefore described by a random number of slope parameters. In practice, one can simply sample the required number of slopes of this function once $f_{i,j}^{t,k}$ has been realized, and view these samples as the first of an arbitrarily long sequence within $\xi_{i,j}^{t,k}$, whose length would be invariant to the realized value $f_{i,j}^{t,k}(\xi_{i,j}^{t,k})$.}


\subsection{Standardized test networks} \label{sec:test_networks}

We generate artificial, standardized test networks on square grids, in order to examine the scaling performance of our algorithm in a systematic manner. For these networks, the \ac{SV} movement graph $(\mathcal{N}_\text{SV}, \mathcal{E}_\text{SV})$ is fully connected whereas the \ac{RV} movement edges $\mathcal{E}_\text{RV}$, which always take one time step to traverse, are determined by Euclidean distance between the stations $\mathcal{N}_\text{SV}$ and the modelled vehicle speed. We let $\mathcal{N}_\text{RV} = \mathcal{N}_\text{SV}$ so that \acp{RV} have access to all \ac{SV} nodes for rebalancing actions. 

The clustering of demand is a primary cause of service degradation for shared mobility systems, and we model it in both the origin and destination of \ac{SV} journeys. For example, in morning rush hours where \acp{SV} are frequently used for the ``last mile'' of commutes, one encounters high demand for journey starts \acp{SV} at incoming commuter rail stations, and high demand for journey ends near workplaces. We use clustering because homogenous demand patterns under-represent the difficulty of typical real-world rebalancing problems. The generic demand generation procedure is detailed in Algorithm \ref{alg:demand}; we describe the particular parameter settings used in Section \ref{sec:num_results}.

\begin{algorithm}[t]
	\caption{Creation of artificial scenarios for clustered nominal demand on a rectangular grid}\label{alg:demand}
	\textbf{Input:} Number of orig.~and dest.~clusters $(O,D)$, number of time brackets $B$, time bracket length $T_B$, time horizon $T = BT_B$, number of \acp{SV} $N$, \ac{SV} speed $v$, step duration $\Delta t$\\
	\textbf{Output:} Nominal customer demand $F_{i,j}^{t,k}$ for $(i,j) \in \mathcal{E}_\text{SV}$, $t=1,\ldots,T$, $k = 0,\ldots,K$
	\begin{algorithmic}[1]	
		\State	$F_{i,j}^{t,k} = 0$ for all tuples $(i,j,t,k)$
		\For{Time bracket $\beta = 1$ to $B$}
			\State $\mathcal{O}_\beta = \emptyset$, $\mathcal{D}_\beta = \emptyset$, 
			\For{$o = 1$ to $O$}
			\State $(x,y) \leftarrow (\texttt{Uniform}(0,100), \texttt{Uniform}(0,100))$ \Comment{Centroid of trip origin cluster} \label{algl:Ostep1}
			\State $\Sigma \leftarrow \texttt{RandInt}(1,4) \cdot  \frac{100}{O} \cdot \binom{1 \,\, 0}{0 \,\, 1}$ \Comment{Covariance matrix of distribution}
			\State $\mathcal{O}_\beta \leftarrow \mathcal{O}_\beta \cup \{{\rm Normal}(\binom{x}{y},\Sigma)\}$ \Comment{Two-dimensional pdf of origin cluster} \label{algl:OstepN}
			\EndFor
			\For{$d = 1$ to $D$}
			\State Execute lines \ref{algl:Ostep1}-\ref{algl:OstepN} on $(\mathcal{D}_\beta, D)$ instead of $(\mathcal{O}_\beta,O)$ \Comment{Trip destination clusters}
			\EndFor
		
	\For{$t=(\beta - 1)T_B + 1$ to $\beta T_B$} \Comment{Time steps $t$ within time bracket $\beta$}
			\State $\overline{F}_t \leftarrow \texttt{RandomNumberOfTrips}(N, t)$ \label{algl:RandNTrips}
	\For{$f = 1$ to $\overline{F}_t$}
	\State $o \leftarrow \texttt{RandInt}(1,O)$, $d \leftarrow \texttt{RandInt}(1,D)$ \Comment{Choose indices of origin and destination clusters}
	\State Sample $(x_o,y_o)\sim \mathcal{O}_\beta[o], (x_d,y_d) \sim \mathcal{D}_\beta[d]$  \Comment{Sample start and end locations from pdfs}
	\State $(i,j) \leftarrow \texttt{MapToGrid}((x_o,y_o), (x_d,y_d))$ \Comment{Map to nearest node in $\mathcal{N}_{\rm SV}$}
	\State $k \leftarrow \texttt{CalcTime}((x_o,y_o), (x_d,y_d), v, \Delta t)$ \Comment{Trip duration in discrete time steps}
	\State $F_{i,j}^{t,k} \leftarrow F_{i,j}^{t,k} + 1$ \Comment{Update nominal demand for tuple $(i,j,t,k)$}
	\EndFor
	\EndFor
	\EndFor
	\end{algorithmic}
	\Return $F$
\end{algorithm}

The algorithm creates a fixed number of Gaussian peaks in lines \ref{algl:Ostep1}-\ref{algl:OstepN} representing concentrations of trip start locations and trip end locations. The peaks differ for different time brackets $\beta = 1,\ldots,B$, which each contain $T_B \geq 1$ time steps. At each time step $t$, it then determines a random number of ``desired trips'' for customers using the function $\texttt{RandomNumberOfTrips}(N, t)$, where $N$ is the number of \acp{SV} in the system, in line \ref{algl:RandNTrips}.  Each desired trip has an origin and destination sampled in continuous 2D space from randomly-indexed origin and destination distributions. This pair is then mapped to the nearest nodes $(i,j)$ in $\mathcal{N}_\text{SV}$ by the function $\texttt{MapToGrid}((x_o,y_o), (x_d,y_d))$. The duration $k$ is inferred from geometry and the \ac{SV} speed in the following line, after which the relevant indexed demand quantity $F_{i,j}^{t,k}$ is incremented. 

\begin{figure}[tp]
\begin{center}
\includegraphics[width=0.4\textwidth]{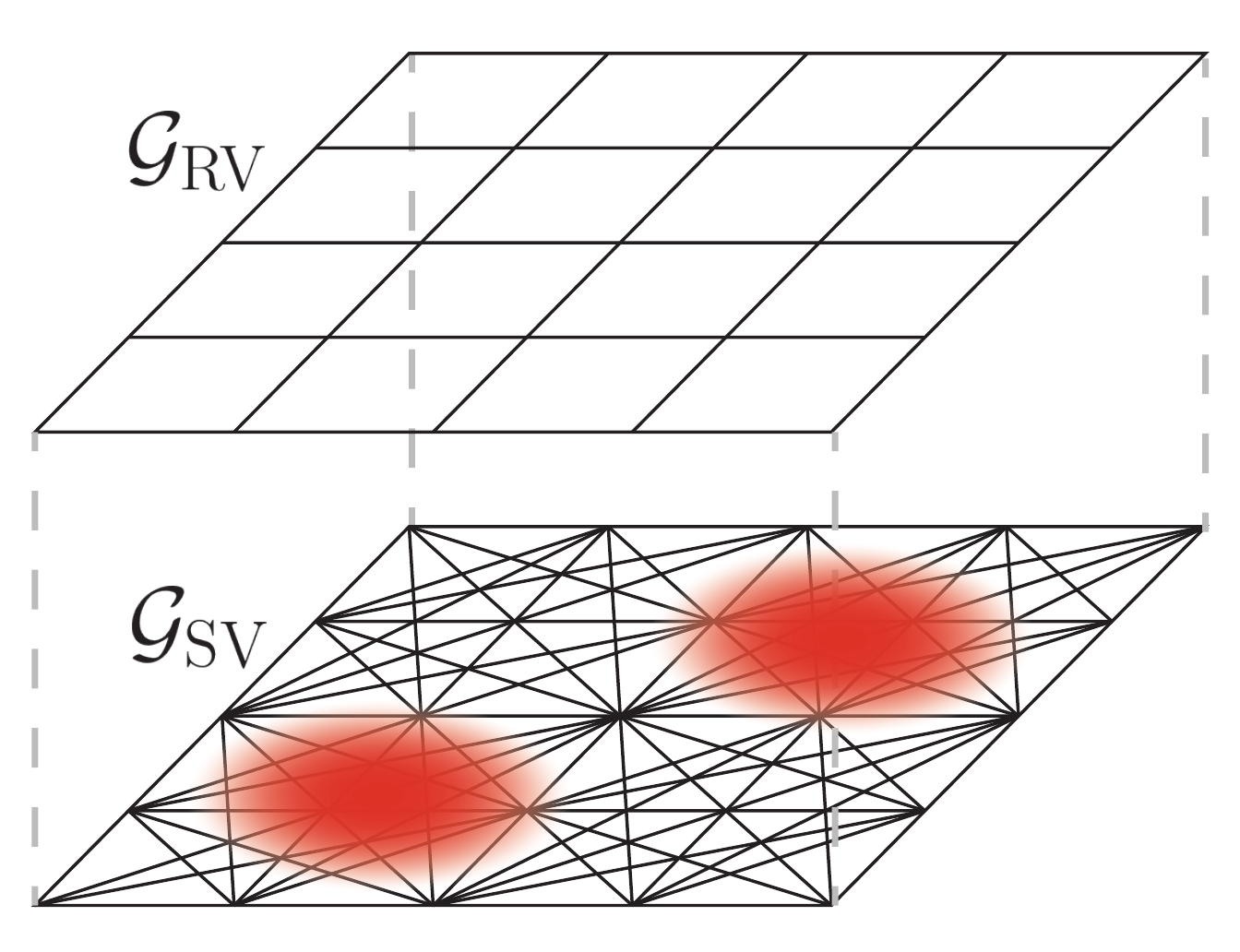}
\caption{Schematic of artificial network, showing randomly-generated demand concentrations on the \ac{SV} graph. In Algorithm \ref{alg:demand} the demand distribution in continuous space is sampled, and each sample is mapped to the nearest node $i \in \mathcal{N}_\text{SV}$.}
\label{fig:artificial_demand}
\end{center}
\end{figure}

\subsection{Numerical results} \label{sec:num_results}

\subsubsection{Algorithm parameters}

We implemented the following solution approaches. In each, the $(y,z)$ decisions from the first stage were made as described, and for the purpose of evaluation the objective \eqref{eq:stage1_obj} was estimated using a Monte Carlo simulation of the second stage cost via problem \eqref{eq:stage2Pen}, averaging over 100 random demand scenarios.
\begin{enumerate}
	\item[NA.] No action to control the system (no \ac{RV} movements and $y=0$).
	\item[M1.] SPAR, solving \eqref{eq:DRRP} for $y$ and $z$ under the assumption that in the second stage is deterministic, with customer demand $f_{i,j}^{t,k}$ and customer journey values $l_{i,j}^{t,k}(\cdot)$ set to expected values ($F_{i,j}^{t,k}$ rounded to the nearest integer and $l_{i,j}^{t,k}(x) = \tfrac{1}{2}(l_{\min} + l_{\max})x$ respectively).
    \item[M2-R.] SPAR as described in Algorithm \ref{alg:spar}, but using the \ac{LP} relaxation of problem \eqref{eq:stage1_approx}, i.e., non-integral \ac{RV} routing. After $n_{\max}$ iterations the non-relaxed version of \eqref{eq:stage1_approx} was solved to obtain a feasible solution.
    \item[M2-HI.] SPAR as described in Algorithm \ref{alg:spar}, but with integrality constraints on $z_{i,j}^t$ relaxed for $t > T/2$. As with M2-R, a non-relaxed version of Stage 1 was solved after the $n_{\max}$ ``half-relaxed'' iterations were completed.
    \item[M2-I.] SPAR as described in Algorithm \ref{alg:spar}, respecting integrality constraints throughout the algorithm.
    \item[M3.] SPAR but picking $y$ at random instead of solving problem \eqref{eq:stage1_approx} on line \ref{algl:stage1}. For each tuple $(i,t)$, the load/unload action was chosen with equal probability from $\{-\overline{y},-\overline{y}+1,\ldots,\overline{y}\}$. After $n_{\max}$ iterations, solve the first-stage problem \eqref{eq:stage1_approx} to obtain a final output as in Algorithm \ref{alg:spar}.
\end{enumerate}

We avoided any tuning of the basic algorithm, and used the step size rule suggested in \cite{powell_learning_2004}, namely $\alpha(n) = 20/(40 + n)$. The anecdotal evidence reported in \cite[Figs.~1 (a), (b)]{powell_learning_2004} suggests that SPAR may take on the order of only a few tens of iterations to approach a high-quality solution, which is faster than would be explained by the rate of decay of the step size $\alpha(n)$. In our simulations we had a similar experience despite solving a very different problem. We set $n_{\max} = 50$ in all cases, except for M3, where we used 200 iterations to obtain acceptable performance. The latter choice was made as M3 does not benefit from the simultaneous optimization and value function approximation effect enjoyed by the other methods.

\begin{table}[tp]
\caption{Number of variables and constraints in first-stage problem \eqref{eq:stage1_approx} for method M2-I. The numbers of integer and continuous variables are equal in each case, and independent of $|\mathcal{V}|$ owing to the flow formulation.}
\begin{center}
{\footnotesize
\begin{tabular}{l|rr|r|l}
\toprule
$|\mathcal{N}_\text{SV}|$ & Integer variables & Continuous variables & Constraints & $|\mathcal{V}|$ tested\\
\midrule
9 & 1188 & 1188 & 3456 & 1, 3\\
16 & 3456 & 3456 & 7488 & 1, 5\\
25 & 8100 & 8100 & 14,400 & 1, 5, 9\\
36 & 16,416 & 16,416 & 25,488 & 1, 5, 11\\
64 & 50,688 & 50,688 & 66,816 & 1, 9, 15\\
100 & 122,400 & 122,400 & 147,600 & 1, 9, 19\\
225 & 612,900 & 612,900 & 669,600 & 1, 13, 25\\
\bottomrule
\end{tabular}
}
\end{center}
\label{tab:num_vars}
\end{table}

\subsubsection{Network parameters}

Tests were performed with $|\mathcal{N}_\text{SV}| \in \{9,16,25,36,64,100,225\}$. Table \ref{tab:num_vars} lists the number of variables and constraints in each network's first stage problem \eqref{eq:stage1_approx} for method M2-I; some or all integer variables were relaxed in M2-HI and M2-R.  The numbers of \acp{RV} tested for each size are listed in the last column; note that due to the flow formulation, the number of model variables is invariant to $|\mathcal{V}|$. For each network size, 10 random instances of the artificial networks described in Section \ref{sec:test_networks} were created, with the parameters $O=3$, $D=5$, $B=6$, $T_B = 2$, $T=12$, $N=5|\mathcal{N}_\text{SV}|$, $v=125 / \sqrt{|\mathcal{N}_\text{SV}|}$ per time step, $\Delta t = $ 15 minutes.
The function $\texttt{RandomNumberOfTrips}(N, t)$ was configured to generate a rounded sample from a normal distribution with mean $0.15N$ and standard deviation $0.075N$, rejecting results below zero. Demand realizations were generated in line \ref{algl:samplexi} of Algorithm \ref{alg:spar} by sampling for each tuple $(i,j,t,k)$ from a Poisson distribution with the nominal demand $F_{i,j}^{t,k}$ as the rate parameter. 

\subsubsection{Mobility system parameters}

The first- and second-stage optimization were parameterized by $\overline{d}_i = 10$, $\overline{b}=5$, $\overline{y} = 10$, $d_i^0 = \overline{d}_i/2$, $b_{i,j}^t = 0$.  \acp{RV} were initialized on uniformly-weighted random nodes, and were fast enough to travel only to an adjacent node (excluding diagonals) in one time step, as depicted on the graph $\mathcal{G}_\text{RV}$ in Fig.~\ref{fig:artificial_demand}. It was assumed that no \acp{SV} were in transit at $t=0$, and the largest \ac{SV} trip duration modelled was $K = 2$ time steps. All journey values were assumed to be distributed uniformly between \$0.50 and \$1.50, regardless of origin, destination, or trip duration. \ac{RV} movement costs were set to $c_{i,j}^{t,k} = \$10^{-3}$ for $i \neq j$ and $0$ for $i=j$; load/unload costs were set to $r_i^t = \$10^{-3}$ for all $i$ and $t$. These penalties were set to very small values purely to discourage arbitrary uncosted \ac{RV} actions under the initial condition $\overline{V}(y;0) = 0$, and the main performance metric of interest was the social cost of unserved customer demand. The model feasibility penalty was set to $r_p =$ \$20 per \ac{SV} created or destroyed. The hardware used was an Intel Core i7 2.6 GHz CPU, and 16 GB RAM. All problems were solved in Gurobi 7.0.2, with computation confined to 2 CPU threads. 

\subsubsection{Performance evaluation}

For the purpose of illustrating the behaviour of the SPAR algorithm, Fig.~\ref{fig:examplev} shows an example of the \ac{VF} approximators returned by the algorithm.

The graphs that follow compare the behaviours of the variants tested. Figure \ref{fig:results_svc_rates} shows a graph of the solution quality in terms of service rate, with Tables \ref{tab:method_comparison_svc} and \ref{tab:method_comparison_cost} in Appendix B providing the underlying numerical values for service rate and objective value respectively.\footnote{We present results primarily in terms of service rate, despite using a dollar objective, as this has the benefit of being comparable across system sizes and is easier to interpret. Comparison of Tables \ref{tab:method_comparison_svc} and \ref{tab:method_comparison_cost} in Appendix B shows that the relative dominance of difference methods is unaffected, as our objective function is in effect very similar to service rate.} Figure \ref{fig:results_comp_times} shows average time to solve the Stage 1 problem \eqref{eq:stage1_approx} to an MIP tolerance of $5 \times 10^{-3}$ within M1 and M2 (Stage 1 is not solved in M3); the numbers are reproduced in Table \ref{tab:comp_times} in AppendixB. Solution times for Stage 2 are shown in the last column of Table \ref{tab:comp_times} for completeness; these are much lower than for Stage 1. 

\begin{figure}[tp]
\begin{center}
\includegraphics[width=\textwidth]{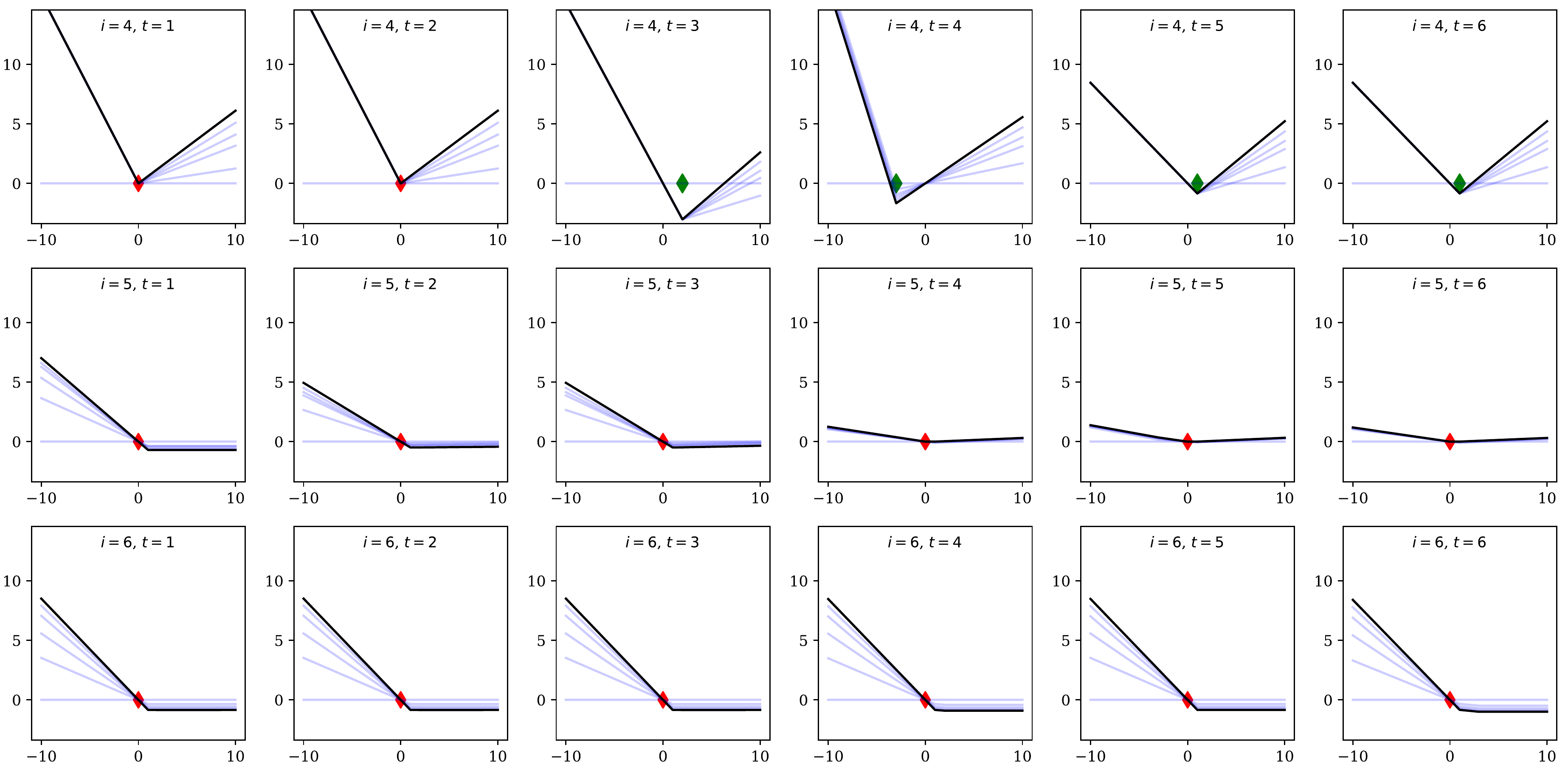}
\caption{Example of value function approximations $\overline{V}_i^t(y_i^{-,t} - y_i^{+,t})$, for $|\mathcal{N}_\text{SV}|=9$ and $|\mathcal{V}|=1$, for time steps $t=1$ to $6$ at stations $4$, $5$, and $6$. Each plot shows a different $(i,t)$ pair, with the black line showing the final estimate after 50 iterations, and lighter lines showing previous iterations. Markers on the horizontal axis denote load/unload decisions $y_i^{-,t} - y_i^{+,t}$ in the final solution to \eqref{eq:stage1_approx}; red indicates no action.}
\label{fig:examplev}
\end{center}
\end{figure}
%

\begin{figure}[tbp]
\begin{center}
\includegraphics[width=\textwidth]{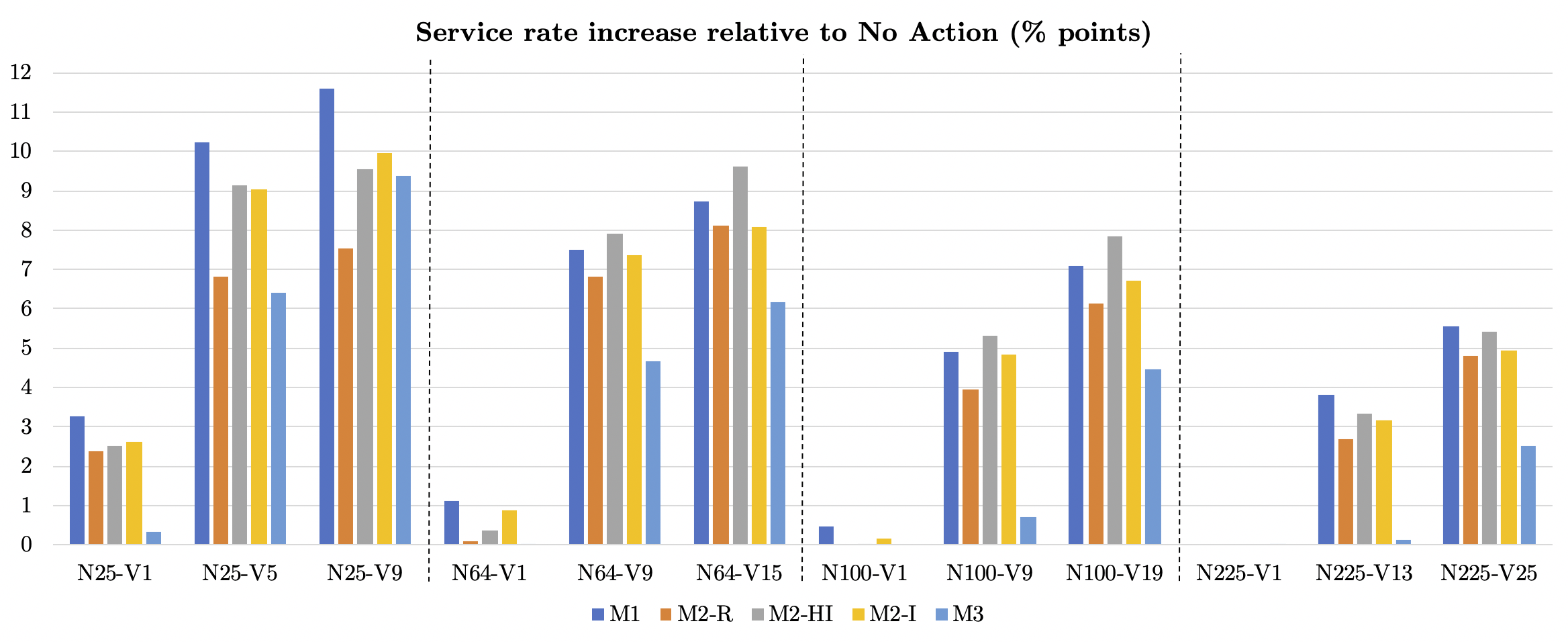}
\caption{Service rate improvement with respect to No Action by the system operator, for different system sizes $|\mathcal{N}_\text{SV}|$ and numbers of \acp{RV}, $|\mathcal{V}|$. Numerical values, including those for $|\mathcal{N}_\text{SV}| = 9, 16, 36$ are provided in Table \ref{tab:method_comparison_svc} in the Appendix. Similar results for objective value change induced are presented in Table \ref{tab:method_comparison_cost}.}
\label{fig:results_svc_rates}
\end{center}
\end{figure}

\begin{figure}[tbp]
\begin{center}
\includegraphics[width=\textwidth]{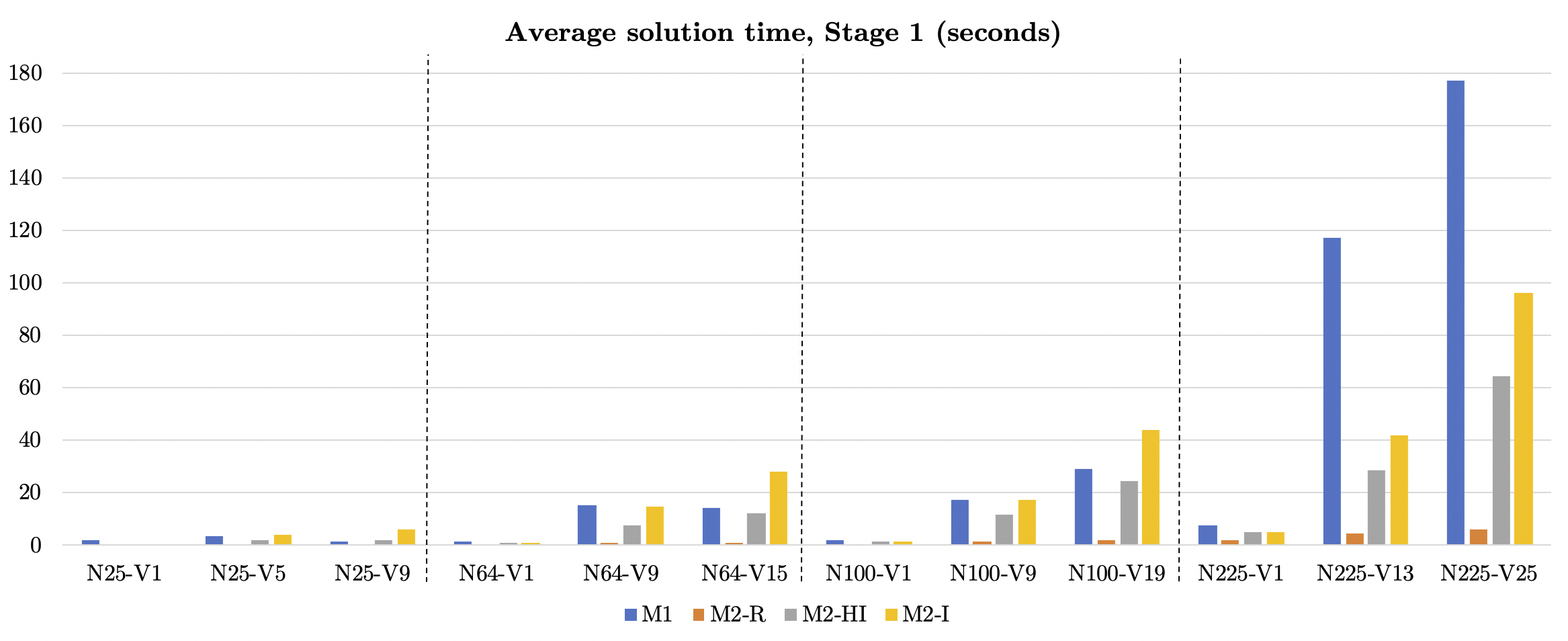}
\caption{Average computation per SPAR iteration spent solving Stage 1 problem \eqref{eq:stage1_approx} for different system sizes $|\mathcal{N}_\text{SV}|$ and numbers of \acp{RV}, $|\mathcal{V}|$. Although the deterministic method M1 obtained similar result quality to those in M2, its computation times grew substantially longer for the 225-node systems.}
\label{fig:results_comp_times}
\end{center}
\end{figure}

\begin{figure}[tbp]
\begin{center}
\includegraphics[width=\textwidth]{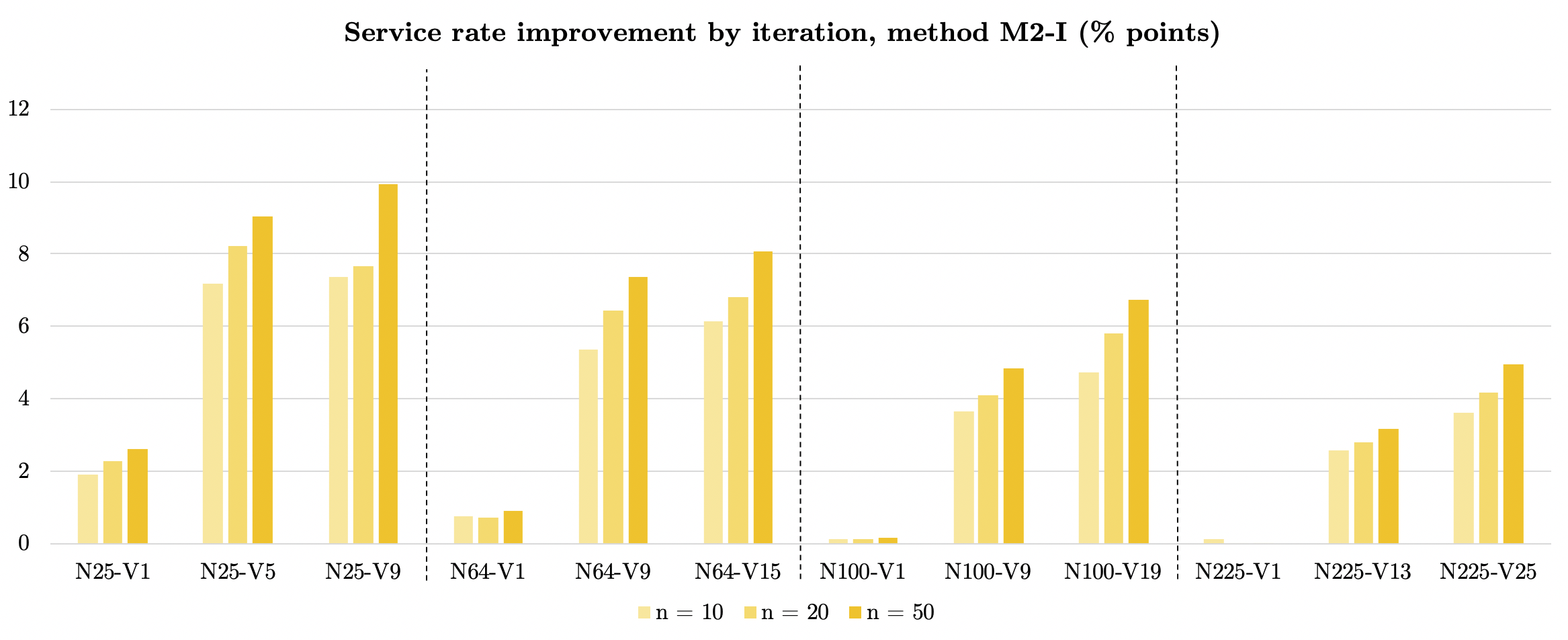}\\
%
\includegraphics[width=\textwidth]{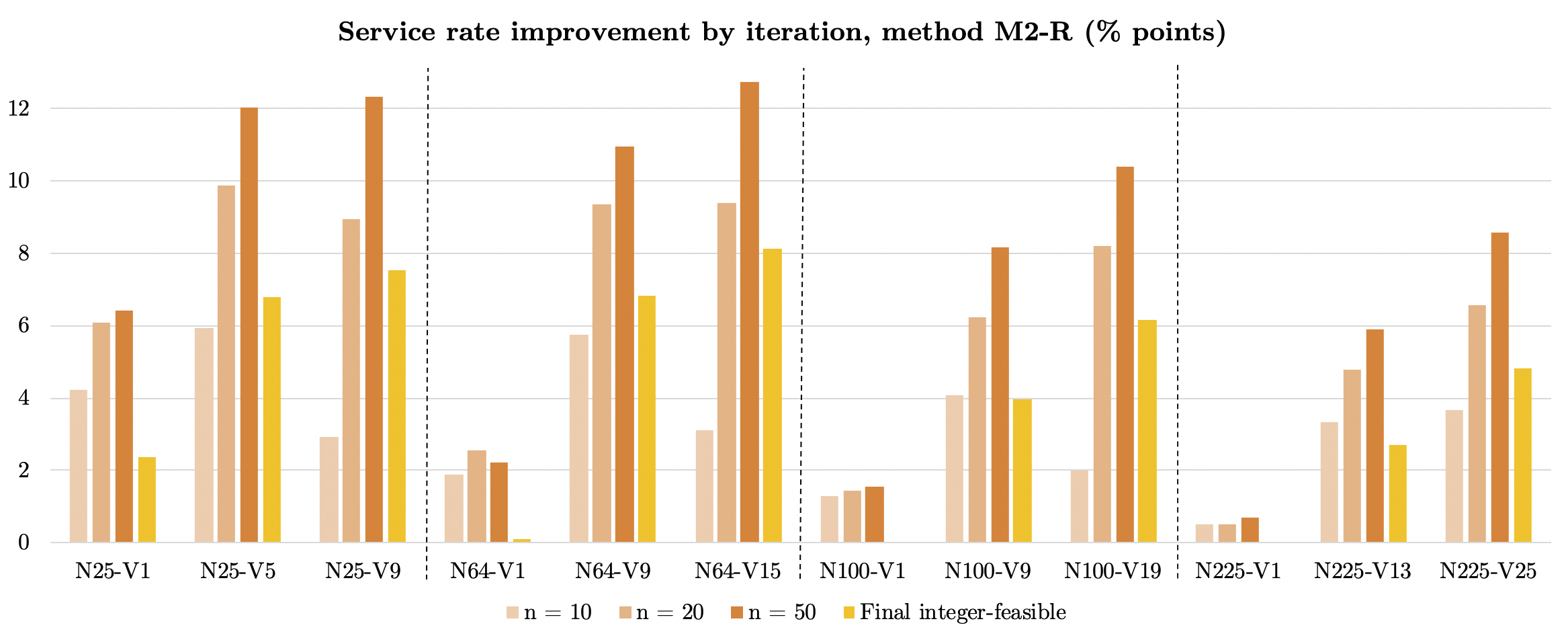}
\caption{Average service level increase by iteration for methods M2-I and M2-R. Service levels in M2-R appear to dominate those in M2-I, but this apparent advantage is lost when a solution respecting integrality constraints is generated after iteration 50.}
\label{fig:results_iters}
\end{center}
\end{figure}

\begin{figure}[tbp]
\begin{center}
\includegraphics[width=1.0\textwidth]{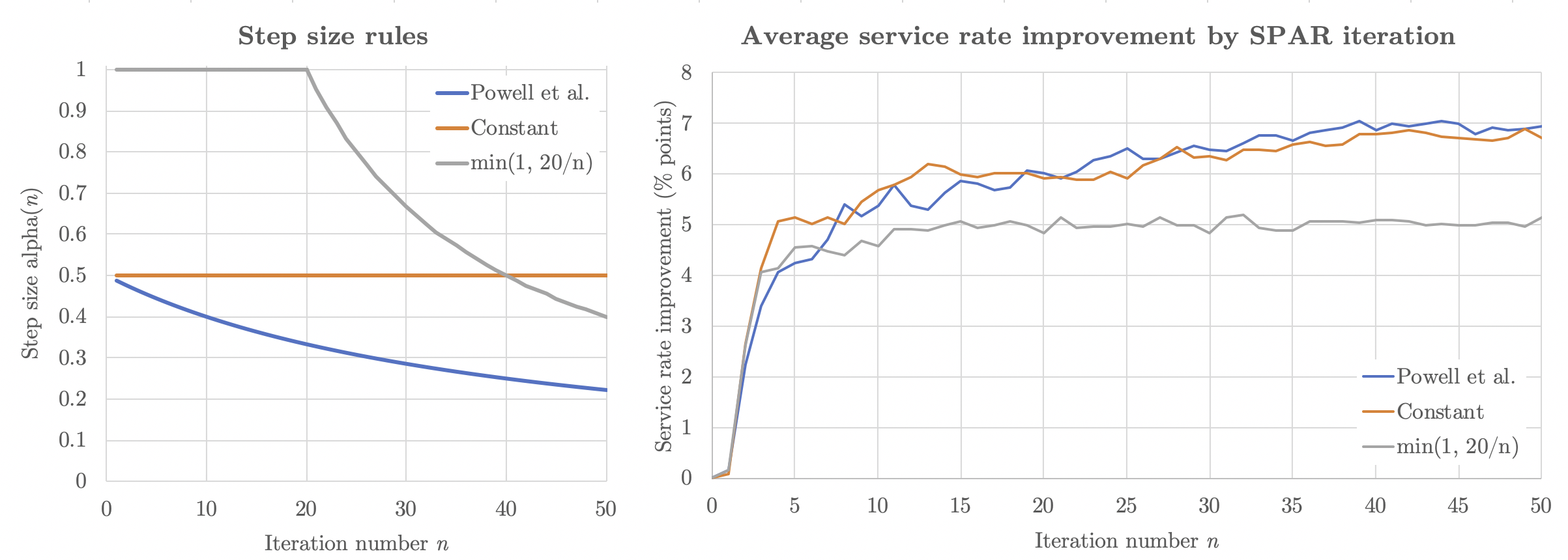}
\caption{Effect of two alternative step size rules on performance of SPAR over 50 iterations, under method M2-I.}
\label{fig:stepsizeeffect}
\end{center}
\end{figure}

On average, all variants caused improvements to the service rate (or almost equivalently in this setting, solution cost in dollars) after 50 iterations -- except where only one \ac{RV} was used on the largest networks. For each solution approach there is also a clear improvement to service rate as the number of \acp{RV} increases for a given network size. 

However, the computation times differ markedly between the methods. Although out of M1 and all M2 variants, M2-R typically generated slightly inferior results, the 50 iterations executed in a fraction of the time needed by the other methods.\footnote{Note that a full integer solution was still required after these ``relaxed'' iterations has been completed, as in line \ref{algl:final_integer} of Algorithm \ref{alg:spar}, which took an amount of time comparable to the each iteration of M2-I.} While the deterministic treatment of demand in M1 appears to lead to a similar or better final performance in comparison to M2 variants, computation times in stage 1 grew very long for the largest networks tested. Method M2-HI appears to offer competitive performance at significantly lower computational cost than the variants M1 and M2-I, which solve full integer problems at each iteration.

The performance of the solutions by iteration number $n$ is shown in Fig.~\ref{fig:results_iters} for methods M2-I and M2-R. In M2-I most of the service improvement was made in the first 10 or 20 iterations. As this variant generates an integer feasible solution at each iteration, this suggests that in a real-time control implementation it would often be safe to terminate the algorithm before all 50 iterations have occurred. In the case of M2-R, performance gains made up to iteration 50 are lost at the point where a final integer-feasible solution has to be generated; the performance estimates made during the iterations turn out to have been optimistic because the integrality constraints on \ac{RV} movements were not enforced. Thus the capability of the \acp{RV} to rebalance the system was overestimated. The underlying numbers are provided in Tables \ref{tab:artificial_results_int} and \ref{tab:artificial_results} in the Appendix.

Method M3 is different from the others, in that the first-stage solutions are obtained by random choice rather than solving problem \eqref{eq:stage1_approx}. This still appears to lead to worthwhile improvements in service rate, even without performing any computationally expensive optimization during the SPAR iterations. Although we used 200 rather than 50 iterations to generate reliable improvements in solution quality, the results suggest that for even larger systems, meaning $|\mathcal{N}_\text{SV}|$ in the high hundreds or the thousands, some kind of random sampling may be an attractive way to manage the scale of the problem. This is because the first-stage problem may become too large to solve in a real-time setting. However, apart from for very small systems, the solution quality returned by M3 was still somewhat worse than the other methods.

\subsubsection{Scalability of the algorithm} \label{sec:gran}

Thanks to the relatively tight relaxation properties of our formulation, we find that in M1 and M2, the solver is able to generate good integer-feasible solutions to these large problems in a short enough time for real-world operational use, meaning substantially less than 1 hour. Further changes to solver parameters, solution tolerances, and the relaxation of different subsets of the integrality constraints could yet improve the trade-off between computation time and solution quality.

It would be attractive for the algorithm we present to be applicable to all real-world systems. Some of these, for example the largest urban bike-sharing networks in use today, have more than 225 stations. We limited the size to 225 in the present paper due to the need to measure statistics across repeat experiments, as well as for each of the methods M1, M2-R, M2-HI, M2-I, and M3. However, in other tests we have successfully generated solutions for systems with 400 nodes, in times suitable for a real-time implementation of the algorithm. Our experience suggests that with additional work to reduce the variance of the Stage 1 solution times, our method could be extended to cover substantially larger systems.

Another motivation for improving the Stage 1 solution time is to allow a given real-world system to be modelled in a more granular manner, meaning more variables. Common to all multi-period optimization formulations, including \cite{contardo_balancing_2012} for bike sharing, we must map the duration of each real-world action to an integer number of time steps. Thus increased accuracy can be obtained by reducing the time discretization interval and including more time steps in the formulation. This is likely also to introduce additional spatial nodes for the shorter ``hops'' that \acp{SV} and \acp{RV} complete in the shorter time steps. The choice of temporal and spatial discretization is ultimately a trade-off between tractability and realism.

\subsubsection{Effect of step size rule $\alpha(n)$} \label{sec:step_size}

In the above experiments we used the step size rule $\alpha(n) = 20/(40+n)$ proposed in \cite{powell_learning_2004}, but it is instructive to examine the effect of alternative rules as a sensitivity analysis. Fig.~\ref{fig:stepsizeeffect} shows service rate results with method M2-I for $|\mathcal{N}_\text{SV}| = 64$ and $|\mathcal{V}| = 9$ under alternative step size rules, $\alpha(n) = 0.5$ and $\alpha(n) = \min\{1, 20/n\}$, as shown on the left plot. These were chosen as examples of functions with qualitatively different behaviour, but which are non-increasing and never take values larger than $1$. The resulting performance averaged over 10 problem instances is plotted on the right-hand side. While the rule from \cite{powell_learning_2004} had slightly superior performance on average, all three rules produced comparable improvements to service rate. Behaviour in early iterations was similar despite the rather different step sizes in effect during those iterations.

\subsection{Philadelphia case study} \label{sec:nyc_network}

Experiments were run on a model of the Philadelphia bike sharing system, with $|\mathcal{N}_\text{SV}| = 102$ docking stations and 1103 bikes, parameterized using public customer journey data from April 2015 to June 2017. The system is shown in Fig.~\ref{fig:philly_stations}. The methods described above were applied to a 3-hour time horizon containing $T=12$ steps of 15 minutes starting at 8am, with the customer demand data $F_{i,j}^{t,k}$ corresponding to rates measured for a June weekday. The maximum journey duration modelled was $K=4$ time steps, i.e., 1 hour. We assumed $|\mathcal{V}| = 15$ \acp{RV} (i.e., repositioning trucks) were present, each with capacity $\overline{d} = 10$, and assumed \acp{RV} could reach all customer stations, i.e.~$\mathcal{N}_\text{RV} = \mathcal{N}_\text{SV}$. The speed of the \acp{RV} travelling around the city was such that on average 16\% of stations were reachable in one time step from any other station. The parameterization of the methods was otherwise the same as in Section \ref{sec:num_results}. We performed 10 tests of the methods, corresponding to 10 different random initial locations of the \acp{RV}.

The average service rate under no rebalancing actions was 69.96\%, and the average customer demand was approximately 450 trips during the planning horizon. Results in terms of service rate, cost, and computation time are shown in Table \ref{tab:philly_stats}. Method M1 performed the worst, caused by the limitation of the deterministic, integer-valued representation of second-stage demand. For this system the average demand $F_{i,j}^{t,k}$ per 15 minute interval was below $0.5$ for most tuples $(i,j,t,k)$, and these were rounded down to $0$, meaning the \acp{RV} were scheduled against an underestimate of customer demand. The results using the relaxations in M2-R and M2-HI were superior to those using full integer solutions in M2-I. However, the final integer solution was much slower to generate, in several instances timing out at the pre-set limit of 1200 seconds. In cases where a timeout did occur, the suboptimality bound was between 1\% and 2\%.

\begin{figure}[tp]
\begin{center}
\includegraphics[width=0.6\textwidth]{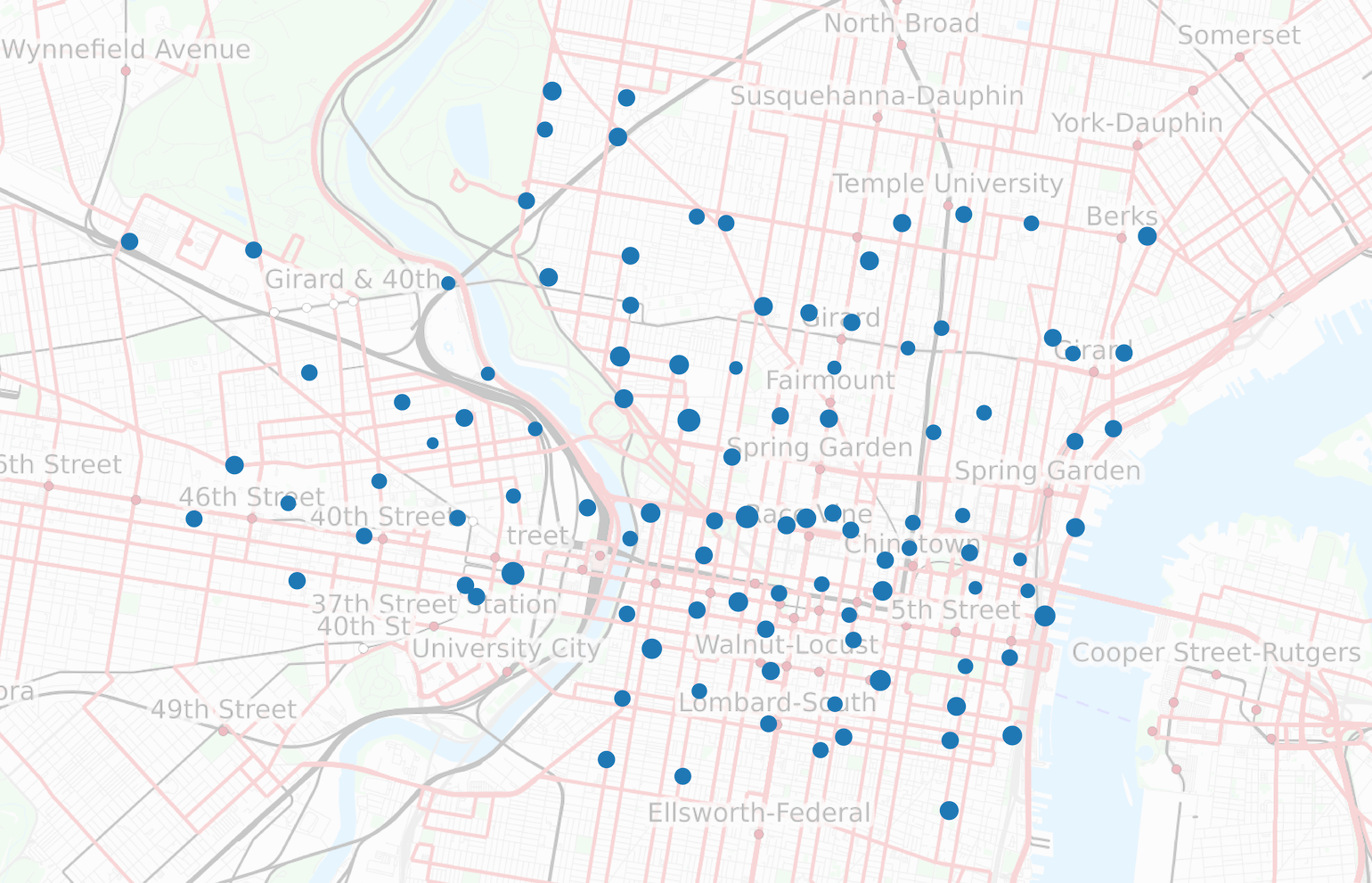}
\caption{Station dataset used in the model of the the Philadelphia bike sharing system. Station capacities $\overline{d}_i$ are indicated by the sizes of the marker. Background image source: OpenStreetMap.}
\label{fig:philly_stations}
\end{center}
\end{figure}

\begin{table}[tp]
\caption{Solution statistics and computation time for the Philadelphia case study (mean $\pm$ std.~dev.).}
\begin{center}
{\footnotesize
\begin{tabular}{l|rrrrr}
\toprule
                                         &  \textbf{M1}      &  \textbf{M2-R}    &  \textbf{M2-HI}   & \textbf{M2-I}     & \textbf{M3} \\
\midrule
Service rate change (\%)                & 2.290 $\pm$ 0.590 & 3.742 $\pm$ 0.877 & 4.538 $\pm$ 0.810 & 3.013 $\pm$ 1.295 & 3.597 $\pm$ 1.485 \\
Cost change (\$)                         & -11.23 $\pm$ 3.19 & -18.20 $\pm$ 4.04 & -20.96 $\pm$ 4.05 & -12.21 $\pm$ 6.17 & -14.03 $\pm$ 6.24 \\
Stage 1 time (s)                         & 138.0 $\pm$ 228.3 & 3.504 $\pm$ 1.173 & 16.61 $\pm$ 30.69 & 14.06 $\pm$ 58.24 & ---${}^\dagger$   \\
Stage 2 time ($\times 10^{-3}$ s)        & 15.73 $\pm$ 5.62  & 21.32 $\pm$ 6.91  & 22.04 $\pm$ 7.10  & 21.38 $\pm$ 6.87  & 19.31 $\pm$ 4.73  \\
Final integer solution time${}^\ast$ (s) & ---${}^\dagger$   & 456.7 $\pm$ 410.3 & 152.7 $\pm$ 158.9 & ---${}^\dagger$   & 1200 $\pm$ 0      \\
\bottomrule
\end{tabular}\\
${}^\ast$ Timeout set to 1200 s. Solution stats reported are for best solution available at timeout.\\[-0.2cm] ${}^\dagger$ Not required in this method.
}
\end{center}
\label{tab:philly_stats}
\end{table}

\section{Conclusion} \label{sec:C}

We described and validated a scalable approach to look-ahead optimization of shared mobility systems in real time, intended for a system operator wishing to minimize a generalized cost metric by redistributing \acp{SV} in parallel with customers using the system. The method is able to handle stochasticity in customer demand and journey valuations. The method offers the flexibility needed for a time-constrained implementation, in that the SPAR iterations can be limited in number, or accelerated by using linear relaxations. We showed that results were relatively robust with respect to stochasticity and the use of integer relaxations, and to different choices of step size rule. In variants M2-R, M2-HI, and M3, where a final integer solution needs to be computed after the SPAR iterations, this computation could also be terminated early if required, as feasible approximate solutions to \eqref{eq:stage1_approx} are trivial for the solver to find. Lastly, the approximate value functions $\overline{V}_i^t(\cdot)$ are useful as an operational indicator of \ac{SV} surplus or shortfall, and could perhaps be used as an input to other, non-solver-based heuristics for computing first-stage decisions (i.e., routing \acp{RV} and loading and transporting \acp{SV}).

The approach we have presented is intended to be used as part of a real-time scheme in which the system operator plans several hours ahead with the aid of a stochastic demand model. After a short time has elapsed, corresponding to one or more discrete time steps, a new optimization would be carried out in light of the new state measurement and updated demand forecast. This ``receding horizon'' control scheme would resemble \ac{MPC} \cite{borrelli_predictive_2017}. An important consideration in \ac{MPC} is the penalty function assigned to the system state at the end of the optimization horizon; this ``terminal cost'' should reflect the value function (in the dynamic programming sense) of an underlying problem over a much longer planning horizon. It may be possible to use the same separable function approximation approach as we used here to derive a suitable terminal costs for an adapted version of problem \eqref{eq:DRRP}.

A future direction with real-world implications is to consider the limit to station-free, also known as free-floating, mobility systems, in which the number of nodes conceptually tends to infinity and uncertainties are described by continuous probability distributions. In this setting one must find an alternative way to map the infinite-dimensional input data to a finite decision problem at acceptable computational cost, identifying a suitable basis for the value function approximation in the process.



%


\section*{Acknowledgments}

J.~Warrington gratefully acknowledges a visiting fellowship funded by the Simons Institute for the Theory of Computing at UC Berkeley, USA, for the Spring Semester of 2018. Most of the work for this study was carried out there. He also thanks Jannik Matuschke of Technische Universit{\" a}t M{\" u}nchen, Germany, for informative conversations on network flow problems, and both authors thank Lioba Heimbach and Christoph Adam at ETH Zurich for data processing efforts in an earlier project related to the Philadelphia case study.

{\small
\bibliographystyle{plain}
\bibliography{warrington}
}



\appendix

\section*{Appendix A: Details of proof of Proposition 1}

The min-cost flow problem is of the form $\min_{x} c^\top x$ subject to $\sum_{m \rightarrow n} x_{m} - \sum_{n \rightarrow p} x_{p} = b_n\,\, \forall n$ and $0 \leq x \leq \overline{x}$, where the scalars $b_n$ and the elements of $\overline{x}_l$ are all integer valued. Vector $x$ takes the form 
\begin{displaymath}
x = \bmat{w^{(1)} \\ d^{(1)}\\ \vdots \\ w^{(T)} \\ d^{(T)}} \quad \text{where} \quad w^{(t)} := \bmat{\underline{w}_{1,1}^{t,0} \\ \underline{w}_{1,1}^{t,1} \\ \vdots \\ \underline{w}_{i,j}^{t,k} \\ \vdots} \quad \text{with each $\underline{w}_{i,j}^{t,k} \in \mathbb{R}^{f_{i,j}^{t,k}(\xi)}$ and} \quad d^{(t)} := \bmat{ d_1^t \\ \vdots \\ d_{|\mathcal{N}_\text{SV}|}^t } \, .
\end{displaymath}
The elements of each subvector $\underline{w}_{i,j}^{t,k}$ correspond to the edge flows shown in Fig.~\ref{fig:s2flowa}. The elements of $\overline{x}_l$ are equal to $1$ for all elements in subvectors $\underline{w}_{i,j}^{t,k}$, and equal to $\overline{d}_i^t$ for element $i$ of $d^{(t)}$. The overall problem's cost vector $c$ has the form 
\begin{displaymath}
c = \bmat{c^{(1)} \\ \mathbf{0} \\ \vdots \\ c^{(T)} \\ \mathbf{0}} \, \text{, where } \, c^{(t)} := \bmat{\overline{l}_{1,1}^{t,0} \\ \overline{l}_{1,1}^{t,1} \\ \vdots \\ \overline{l}_{i,j}^{t,k} \\ \vdots} \, \text{, with each $\overline{l}_{i,j}^{t,k} \in \mathbb{R}^{f_{i,j}^{t,k}(\xi)}$ given by } \, \overline{l}_{i,j}^{t,k} := \bmat{ \overline{l}_{i,j}^{t,k,1} \\ \vdots \\ \overline{l}_{i,j}^{t,k,f_{i,j}^{t,k}(\xi)} } \, \text{ as in Fig.~\ref{fig:s2flowa}}.
\end{displaymath}

There are $|\mathcal{N}_\text{SV}|T + 1$ flow conservation constraints, corresponding to the nodes (plus final sink) shown in Fig.~\ref{fig:s2flowb}. The right-hand side $b_n$ is given by $y_i^{+,t} - y_i^{-,t} - d_i^0$ for $t = 1$ (first column of nodes), and $y_i^{+,1} - y_i^{-,t}$ for $t \geq 2$. The value of the final sink is $\sum_{i \in \mathcal{N}_\text{SV}} [d_i^0 + \sum_{t=1}^T (y_i^{-,t} - y_i^{+,t})]$.

\section*{Appendix B: Results tables}

This appendix contains the full numerical results from which graphs in Section \ref{sec:num_results} are drawn. In tables where several methods are compared, the best value is highlighted in bold.

\begin{table}[htp]
\caption{Comparison of average service rate in final (integer) solutions, for 10 random systems per row; mean $\pm$ standard deviation.}
\begin{center}
{\footnotesize
\begin{tabular}{ll|l|lllll}
\toprule
& & Svc.~rate (\%) & \multicolumn{5}{l}{Increase in service rate (percentage points); mean $\pm$ standard deviation} \\
    &    &  \textbf{NA}      &  \textbf{M1}        & \textbf{M2-R} & \textbf{M2-HI} & \textbf{M2-I} & \textbf{M3}  \\
$|\mathcal{N}_\text{SV}|$ & $|\mathcal{V}|$ &  
	             No action   &  Determ.            & SPAR Relaxed        & SPAR Half Int.     & SPAR Int.         & SPAR Rand.  \\
\midrule
9   & 1  &  80.18 $\pm$ 6.85 &    6.82 $\pm$ 2.94  &    4.98 $\pm$ 3.96  & \bt{7.18} $\pm$ 2.51 &   7.03 $\pm$ 4.50 &   1.37 $\pm$ 2.82 \\
    & 3  &  80.12 $\pm$ 6.82 &    9.08 $\pm$ 3.99  &    4.82 $\pm$ 4.17  & \bt{7.86} $\pm$ 4.15 &   6.64 $\pm$ 5.65 &   4.42 $\pm$ 3.95 \\
16  & 1  &  77.94 $\pm$ 2.94 &    4.69 $\pm$ 1.48  &    3.23 $\pm$ 1.71  & \bt{4.85} $\pm$ 2.22 &   4.68 $\pm$ 2.45 &   0.71 $\pm$ 0.94 \\
    & 5  &  77.69 $\pm$ 2.89 & \bt{8.80} $\pm$ 1.65  &    6.43 $\pm$ 3.19  &    7.83 $\pm$ 1.83 &   8.00 $\pm$ 1.51 &   7.23 $\pm$ 3.31 \\
25  & 1  &  73.93 $\pm$ 4.85 & \bt{3.27} $\pm$ 1.56  &    2.37 $\pm$ 1.93  &    2.53 $\pm$ 1.41 &   2.61 $\pm$ 2.00 &   0.32 $\pm$ 0.76 \\
    & 5  &  73.90 $\pm$ 4.90 & \bt{10.24} $\pm$ 3.27  &    6.80 $\pm$ 2.37  &    9.13 $\pm$ 3.73 &   9.05 $\pm$ 4.27 &   6.41 $\pm$ 2.44 \\
    & 9  &  73.93 $\pm$ 4.83 & \bt{11.58} $\pm$ 3.95  &    7.54 $\pm$ 3.38  &    9.54 $\pm$ 3.30 &   9.94 $\pm$ 3.75 &   9.37 $\pm$ 3.52 \\
36  & 1  &  73.97 $\pm$ 3.06 & \bt{2.45} $\pm$ 1.15  &    1.43 $\pm$ 1.26  &    1.92 $\pm$ 1.32 &   1.87 $\pm$ 1.12 &   0.06 $\pm$ 0.51 \\
    & 5  &  74.05 $\pm$ 3.19 & \bt{8.03} $\pm$ 2.04  &    6.27 $\pm$ 2.12  &    8.14 $\pm$ 2.23 &   6.38 $\pm$ 1.60 &   5.39 $\pm$ 2.06 \\
    & 11 &  73.99 $\pm$ 3.04 &    9.81 $\pm$ 2.45  &    6.65 $\pm$ 1.89  & \bt{10.23} $\pm$ 2.48 &   8.32 $\pm$ 1.71 &   6.05 $\pm$ 1.95 \\
64  & 1  &  72.40 $\pm$ 3.98 & \bt{1.11} $\pm$ 0.62  &    0.11 $\pm$ 0.54  &    0.36 $\pm$ 0.50 &   0.88 $\pm$ 0.47 &   0.02 $\pm$ 0.20 \\
    & 9  &  72.60 $\pm$ 4.10 &    7.49 $\pm$ 2.22  &    6.81 $\pm$ 2.16  & \bt{7.91} $\pm$ 1.98 &   7.37 $\pm$ 2.47 &   4.65 $\pm$ 2.31 \\
    & 15 &  72.43 $\pm$ 4.01 &    8.72 $\pm$ 2.37  &    8.12 $\pm$ 2.85  & \bt{9.61} $\pm$ 2.68 &   8.07 $\pm$ 2.53 &   6.17 $\pm$ 1.91 \\
100 & 1  &  73.69 $\pm$ 1.95 & \bt{0.46} $\pm$ 0.50  &   -0.03 $\pm$ 0.18  &    0.04 $\pm$ 0.32 &   0.16 $\pm$ 0.31 &  -0.04 $\pm$ 0.22 \\
    & 9  &  73.67 $\pm$ 1.96 &    4.90 $\pm$ 0.45  &    3.95 $\pm$ 1.19  & \bt{5.31} $\pm$ 0.46 &   4.83 $\pm$ 0.77 &   0.72 $\pm$ 0.74 \\
    & 19 &  73.67 $\pm$ 2.00 &    7.10 $\pm$ 1.25  &    6.14 $\pm$ 1.21  & \bt{7.83} $\pm$ 1.29 &   6.72 $\pm$ 1.22 &   4.46 $\pm$ 1.23 \\
225 & 1  &  72.47 $\pm$ 2.37 &   -0.01 $\pm$ 0.23  &   -0.00 $\pm$ 0.17  &   -0.05 $\pm$ 0.15 &  -0.09 $\pm$ 0.16 & \bt{0.01} $\pm$ 0.24 \\
    & 13 &  72.45 $\pm$ 2.43 & \bt{3.80} $\pm$ 0.62  &    2.69 $\pm$ 1.23  &    3.35 $\pm$ 0.80 &   3.18 $\pm$ 0.85 &   0.14 $\pm$ 0.43 \\
    & 25 &  72.50 $\pm$ 2.45 & \bt{5.55} $\pm$ 0.92  &    4.81 $\pm$ 0.94  &    5.41 $\pm$ 1.05 &   4.93 $\pm$ 0.76 &   2.51 $\pm$ 0.94 \\
\bottomrule
\end{tabular}
}
\end{center}
\label{tab:method_comparison_svc}
\end{table}%

\begin{table}[htp]
\caption{Comparison of solution cost in final (integer) solution, for 10 random systems per row.}
\begin{center}
{\footnotesize
\begin{tabular}{ll|l|lllll}
\toprule
& & Sol'n cost (\$) & \multicolumn{5}{l}{Absolute change in cost (\$); mean $\pm$ standard deviation} \\
                  &                 &  \textbf{NA} &  \textbf{M1} & \textbf{M2-R} & \textbf{M2-HI} & \textbf{M2-I} & \textbf{M3}  \\
$|\mathcal{N}_\text{SV}|$ & $|\mathcal{V}|$ &  No action   &  Determ.  & SPAR Relaxed & SPAR Half Int. & SPAR Int. & SPAR Rand.  \\
\midrule
9   & 1  &    13.16 $\pm$ 5.35  &     -5.04 $\pm$ 2.26  &    -3.91 $\pm$ 3.06  & \bt{-5.41} $\pm$ 2.17  &     -4.82 $\pm$ 3.20  &    -1.25 $\pm$ 2.23 \\
    & 3  &    13.27 $\pm$ 5.31  & \bt{-6.27} $\pm$ 2.98  &    -3.64 $\pm$ 3.24  &      -5.60 $\pm$ 3.21  &     -4.85 $\pm$ 4.44  &    -2.96 $\pm$ 2.95 \\
16  & 1  &    28.91 $\pm$ 6.42  &     -6.80 $\pm$ 2.40  &    -4.81 $\pm$ 3.17  &  \bt{-7.16} $\pm$ 3.38  &     -6.97 $\pm$ 4.01  &    -1.23 $\pm$ 1.67 \\
    & 5  &    29.28 $\pm$ 6.08  & \bt{-12.53} $\pm$ 3.67  &    -9.31 $\pm$ 4.97  &     -11.13 $\pm$ 2.74  &    -11.01 $\pm$ 3.60  &    -9.93 $\pm$ 5.21 \\
25  & 1  &   52.91 $\pm$ 11.92  & \bt{-7.39} $\pm$ 2.93  &    -4.88 $\pm$ 4.50  &      -5.42 $\pm$ 3.23  &     -5.77 $\pm$ 2.68  &    -0.27 $\pm$ 1.55 \\
    & 5  &   53.18 $\pm$ 12.74  & \bt{-22.49} $\pm$ 7.86  &   -15.64 $\pm$ 6.92  &     -20.09 $\pm$ 8.55  &   -21.08 $\pm$ 11.00  &   -14.09 $\pm$ 5.90 \\
    & 9  &   53.27 $\pm$ 12.18  & \bt{-25.22} $\pm$ 9.65  &   -17.36 $\pm$ 8.64  &     -21.12 $\pm$ 6.50  &    -21.70 $\pm$ 8.56  &   -20.45 $\pm$ 7.71 \\
36  & 1  &   73.05 $\pm$ 14.33  & \bt{-7.49} $\pm$ 3.21  &    -4.19 $\pm$ 4.27  &      -6.03 $\pm$ 3.82  &     -6.39 $\pm$ 3.41  &    -0.56 $\pm$ 1.73 \\
    & 5  &   72.80 $\pm$ 15.01  & \bt{-24.99} $\pm$ 7.97  &   -19.56 $\pm$ 6.62  &     -24.83 $\pm$ 7.48  &    -19.52 $\pm$ 6.21  &   -16.95 $\pm$ 7.37 \\
    & 11 &   72.97 $\pm$ 14.51  &    -29.80 $\pm$ 9.92  &   -20.78 $\pm$ 6.75  & \bt{-31.20} $\pm$ 8.49  &    -25.66 $\pm$ 6.16  &   -17.97 $\pm$ 7.30 \\
64  & 1  &  141.61 $\pm$ 34.72  & \bt{-6.01} $\pm$ 3.78  &    -0.98 $\pm$ 3.29  &      -1.53 $\pm$ 3.43  &     -5.76 $\pm$ 2.59  &    -0.83 $\pm$ 1.31 \\
    & 9  &  139.59 $\pm$ 34.07  &   -41.76 $\pm$ 14.17  &  -38.55 $\pm$ 13.76  & \bt{-43.65} $\pm$ 10.63  &   -42.51 $\pm$ 14.64  &  -26.42 $\pm$ 15.79 \\
    & 15 &  141.13 $\pm$ 34.62  &   -47.45 $\pm$ 13.64  &  -47.10 $\pm$ 20.95  & \bt{-53.42} $\pm$ 15.87  &   -43.89 $\pm$ 15.43  &  -33.89 $\pm$ 13.70 \\
100 & 1  &  199.80 $\pm$ 25.40  & \bt{-3.99} $\pm$ 3.94  &     0.86 $\pm$ 1.57  &      -0.14 $\pm$ 3.20  &     -1.66 $\pm$ 4.14  &     0.29 $\pm$ 1.60 \\
    & 9  &  200.55 $\pm$ 25.51  &    -40.44 $\pm$ 6.31  &  -34.58 $\pm$ 11.42  & \bt{-44.86} $\pm$ 4.85  &    -40.99 $\pm$ 5.73  &    -6.86 $\pm$ 7.75 \\
    & 19 &  200.63 $\pm$ 26.06  &   -58.74 $\pm$ 11.10  &  -52.35 $\pm$ 12.92  & \bt{-65.47} $\pm$ 12.72  &   -56.50 $\pm$ 10.68  &  -37.79 $\pm$ 12.79 \\
225 & 1  &  457.94 $\pm$ 78.91  &     -0.19 $\pm$ 5.00  &    -0.81 $\pm$ 3.81  &       0.52 $\pm$ 2.87  &      1.19 $\pm$ 3.91  & \bt{-0.94} $\pm$ 5.05 \\
    & 13 &  457.91 $\pm$ 78.59  & \bt{-71.34} $\pm$ 17.27  &  -52.62 $\pm$ 25.18  &    -61.70 $\pm$ 18.00  &   -61.55 $\pm$ 20.09  &    -2.28 $\pm$ 7.96 \\
    & 25 &  457.12 $\pm$ 79.86  & \bt{-101.34} $\pm$ 24.82  &  -89.83 $\pm$ 23.30  &   -100.28 $\pm$ 27.08  &   -90.53 $\pm$ 19.27  &  -46.15 $\pm$ 19.25 \\
\bottomrule
\end{tabular}
}
\end{center}
\label{tab:method_comparison_cost}
\end{table}%

\begin{table}[htp]
\caption{Average computation time in seconds per algorithm iteration for M1 and M2. Note Stage 1 is not solved in M3 except in line \ref{algl:final_integer} of Algorithm \ref{alg:spar}. Stage 2 computation times for methods other than M2-I were similar.}
\begin{center}
{\footnotesize
\begin{tabular}{cc|rrrr|r}
\toprule
& & \multicolumn{4}{l}{Stage 1 (line \ref{algl:stage1} of Algorithm \ref{alg:spar})} & Stage 2 \\
    &    &  \textbf{M1} &  \textbf{M2-R} &  \textbf{M2-HI} & \textbf{M2-I} & \textbf{M2-I} \\
$|\mathcal{N}_\text{SV}|$ & $|\mathcal{V}|$ & Determ. &  SPAR Relaxed & SPAR Half Int.    & SPAR Int. & \\
\midrule
9   & 1  &    0.135 &        \bt{0.023} &                0.097 &            0.127 &   0.00155 \\
    & 3  &    0.224 &        \bt{0.032} &                0.230 &            0.457 &   0.00154 \\
16  & 1  &    0.906 &        \bt{0.046} &                0.215 &            0.294 &   0.00228 \\
    & 5  &    1.370 &        \bt{0.079} &                0.734 &            1.986 &   0.00231 \\
25  & 1  &    1.929 &        \bt{0.081} &                0.331 &            0.469 &   0.00407 \\
    & 5  &    3.624 &        \bt{0.145} &                1.890 &            3.823 &   0.00393 \\
    & 9  &    1.654 &        \bt{0.167} &                2.119 &            6.172 &   0.00392 \\
36  & 1  &    1.207 &        \bt{0.139} &                0.419 &            0.587 &   0.00543 \\
    & 5  &    4.121 &        \bt{0.336} &                2.160 &            4.363 &   0.00538 \\
    & 11 &    4.569 &        \bt{0.338} &                5.233 &           10.780 &   0.00538 \\
64  & 1  &    1.275 &        \bt{0.270} &                0.678 &            0.703 &   0.00951 \\
    & 9  &   15.064 &        \bt{0.879} &                7.585 &           14.710 &   0.00912 \\
    & 15 &   14.234 &        \bt{0.868} &               12.371 &           28.281 &   0.00953 \\
100 & 1  &    1.819 &        \bt{0.563} &                1.662 &            1.365 &   0.01407 \\
    & 9  &   27.053 &        \bt{1.448} &               11.446 &           17.399 &   0.01413 \\
    & 19 &   29.242 &        \bt{1.963} &               24.591 &           44.096 &   0.01388 \\
225 & 1  &    7.389 &        \bt{1.728} &                5.078 &            4.871 &   0.05446 \\
    & 13 &  117.257 &        \bt{4.490} &               28.669 &           41.649 &   0.05341 \\
    & 25 &  176.919 &        \bt{5.886} &               64.274 &           96.225 &   0.05060 \\
\bottomrule
\end{tabular}
}
\end{center}
\label{tab:comp_times}
\end{table}

\begin{table}[htp]
\caption{Improvement in service rate from method M2-I over iterations $k$. Values reported are mean $\pm$ standard deviation over 10 random instances. The ``No action'' column reports the baseline service rate, and differ slightly between rows for a given $|\mathcal{N}_\text{SV}|$ due to independent sampling of customer demand.}
\begin{center}
{\footnotesize
\begin{tabular}{ll|l|lll}
\toprule
& & Svc.~rate (\%) & \multicolumn{3}{l}{Increase in service rate (percentage points)}\\
$|\mathcal{N}_\text{SV}|$ & $|\mathcal{V}|$ &  No action   &      $k=10$    &   $k=20$      &     $k=50$     \\
\midrule
9   & 1  &  79.96 $\pm$ 7.10 &  5.96 $\pm$ 4.27 &   6.07 $\pm$ 5.15 &   7.03 $\pm$ 4.50 \\
    & 3  &  80.51 $\pm$ 6.86 &  4.47 $\pm$ 6.17 &   6.18 $\pm$ 4.70 &   6.64 $\pm$ 5.65 \\
16  & 1  &  77.54 $\pm$ 3.16 &  3.84 $\pm$ 2.34 &   3.69 $\pm$ 1.84 &   4.68 $\pm$ 2.45 \\
    & 5  &  77.81 $\pm$ 3.02 &  4.76 $\pm$ 2.30 &   5.99 $\pm$ 2.88 &   8.00 $\pm$ 1.51 \\
25  & 1  &  74.03 $\pm$ 4.81 &  1.90 $\pm$ 2.16 &   2.27 $\pm$ 1.87 &   2.61 $\pm$ 2.00 \\
    & 5  &  74.03 $\pm$ 5.03 &  7.17 $\pm$ 4.68 &   8.23 $\pm$ 4.28 &   9.05 $\pm$ 4.27 \\
    & 9  &  73.89 $\pm$ 4.85 &  7.35 $\pm$ 6.01 &   7.66 $\pm$ 4.33 &   9.94 $\pm$ 3.75 \\
36  & 1  &  74.11 $\pm$ 3.04 &  1.24 $\pm$ 0.67 &   1.51 $\pm$ 0.82 &   1.87 $\pm$ 1.12 \\
    & 5  &  74.01 $\pm$ 2.94 &  4.37 $\pm$ 2.15 &   5.45 $\pm$ 1.13 &   6.38 $\pm$ 1.60 \\
    & 11 &  74.18 $\pm$ 3.01 &  5.22 $\pm$ 2.17 &   6.19 $\pm$ 2.38 &   8.32 $\pm$ 1.71 \\
64  & 1  &  72.47 $\pm$ 4.15 &  0.75 $\pm$ 0.52 &   0.72 $\pm$ 0.64 &   0.88 $\pm$ 0.47 \\
    & 9  &  72.47 $\pm$ 4.03 &  5.37 $\pm$ 2.25 &   6.44 $\pm$ 2.23 &   7.37 $\pm$ 2.47 \\
    & 15 &  72.44 $\pm$ 3.98 &  6.12 $\pm$ 2.44 &   6.81 $\pm$ 2.50 &   8.07 $\pm$ 2.53 \\
100 & 1  &  73.74 $\pm$ 1.95 &  0.13 $\pm$ 0.34 &   0.13 $\pm$ 0.42 &   0.16 $\pm$ 0.31 \\
    & 9  &  73.63 $\pm$ 1.95 &  3.63 $\pm$ 0.49 &   4.10 $\pm$ 0.55 &   4.83 $\pm$ 0.77 \\
    & 19 &  73.72 $\pm$ 1.97 &  4.72 $\pm$ 1.42 &   5.82 $\pm$ 1.13 &   6.72 $\pm$ 1.22 \\
225 & 1  &  72.53 $\pm$ 2.37 &  0.10 $\pm$ 0.16 &  -0.07 $\pm$ 0.20 &  -0.09 $\pm$ 0.16 \\
    & 13 &  72.54 $\pm$ 2.39 &  2.58 $\pm$ 0.91 &   2.81 $\pm$ 0.89 &   3.18 $\pm$ 0.85 \\
    & 25 &  72.47 $\pm$ 2.38 &  3.61 $\pm$ 0.76 &   4.15 $\pm$ 0.53 &   4.93 $\pm$ 0.76 \\
\bottomrule
\end{tabular}
}
\end{center}
\label{tab:artificial_results_int}
\end{table}%

\begin{table}[htp]
\caption{Service rate from method M2-R over iterations $k$. After iteration 50 using a relaxed Stage 1 model, a final integer solution is generated; cf.~M2-I, Table \ref{tab:artificial_results_int}, where each iteration is already integer-feasible.}
\begin{center}
{\footnotesize
\begin{tabular}{ll|l|llll}
\toprule
& & Svc.~rate (\%) & \multicolumn{4}{l}{Increase in service rate (percentage points)}\\
$|\mathcal{N}_\text{SV}|$ & $|\mathcal{V}|$ &  No action   &      $k=10$    &   $k=20$      &     $k=50$      &      Final integer     \\
\midrule
9   & 1  &  80.18 $\pm$ 6.85 &   5.25 $\pm$ 5.40 &  6.55 $\pm$ 7.29 &   8.43 $\pm$ 5.78 &   4.98 $\pm$ 3.96 \\
    & 3  &  80.12 $\pm$ 6.82 &   3.06 $\pm$ 5.24 &  6.68 $\pm$ 6.10 &   8.94 $\pm$ 4.47 &   4.82 $\pm$ 4.17 \\
16  & 1  &  77.94 $\pm$ 2.94 &   5.08 $\pm$ 3.27 &  5.99 $\pm$ 1.97 &   6.20 $\pm$ 2.59 &   3.23 $\pm$ 1.71 \\
    & 5  &  77.69 $\pm$ 2.89 &  -1.67 $\pm$ 3.46 &  6.55 $\pm$ 2.63 &  10.10 $\pm$ 2.78 &   6.43 $\pm$ 3.19 \\
25  & 1  &  73.93 $\pm$ 4.85 &   4.21 $\pm$ 2.90 &  6.10 $\pm$ 2.16 &   6.41 $\pm$ 2.26 &   2.37 $\pm$ 1.93 \\
    & 5  &  73.90 $\pm$ 4.90 &   5.93 $\pm$ 5.18 &  9.89 $\pm$ 4.22 &  12.02 $\pm$ 3.69 &   6.80 $\pm$ 2.37 \\
    & 9  &  73.93 $\pm$ 4.83 &   2.91 $\pm$ 3.28 &  8.93 $\pm$ 3.74 &  12.33 $\pm$ 4.38 &   7.54 $\pm$ 3.38 \\
36  & 1  &  73.97 $\pm$ 3.06 &   3.52 $\pm$ 1.68 &  3.99 $\pm$ 1.17 &   4.60 $\pm$ 1.34 &   1.43 $\pm$ 1.26 \\
    & 5  &  74.05 $\pm$ 3.19 &   5.52 $\pm$ 3.10 &  8.11 $\pm$ 3.21 &  10.31 $\pm$ 2.84 &   6.27 $\pm$ 2.12 \\
    & 11 &  73.99 $\pm$ 3.04 &   0.42 $\pm$ 2.98 &  9.71 $\pm$ 2.64 &  11.46 $\pm$ 2.04 &   6.65 $\pm$ 1.89 \\
64  & 1  &  72.40 $\pm$ 3.98 &   1.90 $\pm$ 1.19 &  2.54 $\pm$ 0.93 &   2.24 $\pm$ 0.59 &   0.11 $\pm$ 0.54 \\
    & 9  &  72.60 $\pm$ 4.10 &   5.75 $\pm$ 3.40 &  9.34 $\pm$ 2.74 &  10.96 $\pm$ 2.59 &   6.81 $\pm$ 2.16 \\
    & 15 &  72.43 $\pm$ 4.01 &   3.12 $\pm$ 4.90 &  9.38 $\pm$ 3.75 &  12.75 $\pm$ 3.38 &   8.12 $\pm$ 2.85 \\
100 & 1  &  73.69 $\pm$ 1.95 &   1.29 $\pm$ 0.33 &  1.43 $\pm$ 0.40 &   1.56 $\pm$ 0.33 &  -0.03 $\pm$ 0.18 \\
    & 9  &  73.67 $\pm$ 1.96 &   4.08 $\pm$ 1.32 &  6.24 $\pm$ 0.97 &   8.15 $\pm$ 0.72 &   3.95 $\pm$ 1.19 \\
    & 19 &  73.67 $\pm$ 2.00 &   1.99 $\pm$ 1.63 &  8.21 $\pm$ 1.40 &  10.40 $\pm$ 1.53 &   6.14 $\pm$ 1.21 \\
200 & 1  &  72.47 $\pm$ 2.37 &   0.53 $\pm$ 0.34 &  0.53 $\pm$ 0.18 &   0.71 $\pm$ 0.17 &  -0.00 $\pm$ 0.17 \\
    & 13 &  72.45 $\pm$ 2.43 &   3.35 $\pm$ 1.23 &  4.78 $\pm$ 1.03 &   5.89 $\pm$ 0.75 &   2.69 $\pm$ 1.23 \\
    & 25 &  72.50 $\pm$ 2.45 &   3.65 $\pm$ 1.31 &  6.57 $\pm$ 1.16 &   8.58 $\pm$ 1.03 &   4.81 $\pm$ 0.94 \\
\bottomrule
\end{tabular}
}
\end{center}
\label{tab:artificial_results}
\end{table}%

\end{document}